
\UseRawInputEncoding
\documentclass[12pt]{amsart}
\usepackage{amssymb}
\usepackage[all]{xy}
\usepackage[toc,page,title,titletoc,header]{appendix}
\usepackage{xcolor}

\setlength{\headheight}{8pt}
\setlength{\textheight}{22.4cm}
\setlength{\textwidth}{14.5cm}
\setlength{\oddsidemargin}{.1cm}
\setlength{\evensidemargin}{.1cm}
\setlength{\topmargin}{0.2cm}
\begin{document}
\theoremstyle{plain}
\newtheorem{thm}{Theorem}[section]
\newtheorem*{thm1}{Theorem 1}
\newtheorem*{thm1.1}{Theorem 1.1}
\newtheorem*{thmM}{Main Theorem}
\newtheorem*{thmA}{Theorem A}
\newtheorem*{thm2}{Theorem 2}
\newtheorem{lemma}[thm]{Lemma}
\newtheorem{lem}[thm]{Lemma}
\newtheorem{cor}[thm]{Corollary}
\newtheorem{pro}[thm]{Proposition}
\newtheorem{propose}[thm]{Proposition}
\newtheorem{variant}[thm]{Variant}
\theoremstyle{definition}
\newtheorem{notations}[thm]{Notations}
\newtheorem{rem}[thm]{Remark}
\newtheorem{rmk}[thm]{Remark}
\newtheorem{rmks}[thm]{Remarks}
\newtheorem{defi}[thm]{Definition}
\newtheorem{exe}[thm]{Example}
\newtheorem{claim}[thm]{Claim}
\newtheorem{ass}[thm]{Assumption}
\newtheorem{prodefi}[thm]{Proposition-Definition}
\newtheorem{que}[thm]{Question}
\newtheorem{con}[thm]{Conjecture}
\newtheorem*{assa}{Assumption A}
\newtheorem*{algstate}{Algebraic form of Theorem \ref{thmstattrainv}}

\newtheorem*{dmlcon}{Dynamical Mordell-Lang Conjecture}
\newtheorem*{zdocon}{Zariski dense orbit Conjecture}
\newtheorem*{pdd}{P(d)}
\newtheorem*{bfd}{BF(d)}

\newtheorem*{probreal}{Realization problems}
\numberwithin{equation}{section}
\newcounter{elno}                
\def\points{\list
{\hss\llap{\upshape{(\roman{elno})}}}{\usecounter{elno}}}
\let\endpoints=\endlist
\newcommand{\SH}{\rm SH}
\newcommand{\Tan}{\rm Tan}
\newcommand{\res}{\rm res}
\newcommand{\Om}{\Omega}
\newcommand{\om}{\omega}
\newcommand{\la}{\lambda}
\newcommand{\mc}{\mathcal}
\newcommand{\mb}{\mathbb}
\newcommand{\surj}{\twoheadrightarrow}
\newcommand{\inj}{\hookrightarrow}
\newcommand{\zar}{{\rm zar}}
\newcommand{\Exc}{{\rm Exc}}
\newcommand{\an}{{\rm an}}
\newcommand{\red}{{\rm red}}
\newcommand{\codim}{{\rm codim}}
\newcommand{\Supp}{{\rm Supp}}
\newcommand{\rank}{{\rm rank}}
\newcommand{\Ker}{{\rm Ker \ }}
\newcommand{\Pic}{{\rm Pic}}
\newcommand{\Der}{{\rm Der}}
\newcommand{\Div}{{\rm Div}}
\newcommand{\Hom}{{\rm Hom}}
\newcommand{\im}{{\rm im}}
\newcommand{\Spec}{{\rm Spec \,}}
\newcommand{\Nef}{{\rm Nef \,}}
\newcommand{\Frac}{{\rm Frac \,}}
\newcommand{\Sing}{{\rm Sing}}
\newcommand{\sing}{{\rm sing}}
\newcommand{\reg}{{\rm reg}}
\newcommand{\Char}{{\rm char}}
\newcommand{\Tr}{{\rm Tr}}
\newcommand{\ord}{{\rm ord}}
\newcommand{\id}{{\rm id}}
\newcommand{\NE}{{\rm NE}}
\newcommand{\Gal}{{\rm Gal}}
\newcommand{\Min}{{\rm Min \ }}
\newcommand{\Max}{{\rm Max \ }}
\newcommand{\Alb}{{\rm Alb}\,}
\newcommand{\GL}{{\rm GL}\,}        
\newcommand{\PGL}{{\rm PGL}\,}
\newcommand{\Bir}{{\rm Bir}}
\newcommand{\Aut}{{\rm Aut}}
\newcommand{\End}{{\rm End}}
\newcommand{\Per}{{\rm Per}\,}
\newcommand{\ie}{{\it i.e.\/},\ }
\newcommand{\niso}{\not\cong}
\newcommand{\nin}{\not\in}
\newcommand{\soplus}[1]{\stackrel{#1}{\oplus}}
\newcommand{\by}[1]{\stackrel{#1}{\rightarrow}}
\newcommand{\longby}[1]{\stackrel{#1}{\longrightarrow}}
\newcommand{\vlongby}[1]{\stackrel{#1}{\mbox{\large{$\longrightarrow$}}}}
\newcommand{\ldownarrow}{\mbox{\Large{\Large{$\downarrow$}}}}
\newcommand{\lsearrow}{\mbox{\Large{$\searrow$}}}
\renewcommand{\d}{\stackrel{\mbox{\scriptsize{$\bullet$}}}{}}
\newcommand{\dlog}{{\rm dlog}\,}    
\newcommand{\longto}{\longrightarrow}
\newcommand{\vlongto}{\mbox{{\Large{$\longto$}}}}
\newcommand{\limdir}[1]{{\displaystyle{\mathop{\rm lim}_{\buildrel\longrightarrow\over{#1}}}}\,}
\newcommand{\liminv}[1]{{\displaystyle{\mathop{\rm lim}_{\buildrel\longleftarrow\over{#1}}}}\,}
\newcommand{\norm}[1]{\mbox{$\parallel{#1}\parallel$}}
\newcommand{\boxtensor}{{\Box\kern-9.03pt\raise1.42pt\hbox{$\times$}}}
\newcommand{\into}{\hookrightarrow}
\newcommand{\image}{{\rm image}\,}
\newcommand{\Lie}{{\rm Lie}\,}      
\newcommand{\CM}{\rm CM}
\newcommand{\sext}{\mbox{${\mathcal E}xt\,$}}  
\newcommand{\shom}{\mbox{${\mathcal H}om\,$}}  
\newcommand{\coker}{{\rm coker}\,}  
\newcommand{\sm}{{\rm sm}}
\newcommand{\pgcd}{\text{pgcd}}
\newcommand{\trd}{\text{tr.d.}}
\newcommand{\tensor}{\otimes}
\newcommand{\hotimes}{\hat{\otimes}}

\renewcommand{\iff}{\mbox{ $\Longleftrightarrow$ }}
\newcommand{\supp}{{\rm supp}\,}
\newcommand{\ext}[1]{\stackrel{#1}{\wedge}}
\newcommand{\onto}{\mbox{$\,\>>>\hspace{-.5cm}\to\hspace{.15cm}$}}
\newcommand{\propsubset}
{\mbox{$\textstyle{
\subseteq_{\kern-5pt\raise-1pt\hbox{\mbox{\tiny{$/$}}}}}$}}
\newcommand{\sA}{{\mathcal A}}
\newcommand{\sB}{{\mathcal B}}
\newcommand{\sC}{{\mathcal C}}
\newcommand{\sD}{{\mathcal D}}
\newcommand{\sE}{{\mathcal E}}
\newcommand{\sF}{{\mathcal F}}
\newcommand{\sG}{{\mathcal G}}
\newcommand{\sH}{{\mathcal H}}
\newcommand{\sI}{{\mathcal I}}
\newcommand{\sJ}{{\mathcal J}}
\newcommand{\sK}{{\mathcal K}}
\newcommand{\sL}{{\mathcal L}}
\newcommand{\sM}{{\mathcal M}}
\newcommand{\sN}{{\mathcal N}}
\newcommand{\sO}{{\mathcal O}}
\newcommand{\sP}{{\mathcal P}}
\newcommand{\sQ}{{\mathcal Q}}
\newcommand{\sR}{{\mathcal R}}
\newcommand{\sS}{{\mathcal S}}
\newcommand{\sT}{{\mathcal T}}
\newcommand{\sU}{{\mathcal U}}
\newcommand{\sV}{{\mathcal V}}
\newcommand{\sW}{{\mathcal W}}
\newcommand{\sX}{{\mathcal X}}
\newcommand{\sY}{{\mathcal Y}}
\newcommand{\sZ}{{\mathcal Z}}
\newcommand{\A}{{\mathbb A}}
\newcommand{\B}{{\mathbb B}}
\newcommand{\C}{{\mathbb C}}
\newcommand{\D}{{\mathbb D}}
\newcommand{\E}{{\mathbb E}}
\newcommand{\F}{{\mathbb F}}
\newcommand{\G}{{\mathbb G}}
\newcommand{\HH}{{\mathbb H}}
\newcommand{\I}{{\mathbb I}}
\newcommand{\J}{{\mathbb J}}
\newcommand{\M}{{\mathbb M}}
\newcommand{\N}{{\mathbb N}}
\renewcommand{\P}{{\mathbb P}}
\newcommand{\Q}{{\mathbb Q}}
\newcommand{\R}{{\mathbb R}}
\newcommand{\T}{{\mathbb T}}
\newcommand{\U}{{\mathbb U}}
\newcommand{\V}{{\mathbb V}}
\newcommand{\W}{{\mathbb W}}
\newcommand{\X}{{\mathbb X}}
\newcommand{\Y}{{\mathbb Y}}
\newcommand{\Z}{{\mathbb Z}}
\newcommand{\bk}{{\mathbf{k}}}
\newcommand{\bbk}{{\overline{\mathbf{k}}}}
\newcommand{\Fix}{\mathrm{Fix}}

\title[]{Remarks on algebraic dynamics in positive characteristic}

\author{Junyi Xie}


\address{Univ Rennes, CNRS, IRMAR - UMR 6625, F-35000 Rennes, France}

\email{junyi.xie@univ-rennes1.fr}

\thanks{The author is partially supported by project ``Fatou'' ANR-17-CE40-0002-01 and  PEPS CNRS}

\date{\today}

\bibliographystyle{plain}

\maketitle

\begin{abstract}
In this paper, we study arithmetic dynamics in arbitrary characteristic, in particular in positive characteristic.
We generalise some basic facts on arithmetic degree and canonical height in positive characteristic. 
As applications, we prove the dynamical Mordell-Lang conjecture for automorphisms of projective surfaces of positive entropy, the Zariski dense orbit conjecture for automorphisms of projective surfaces and for endomorphisms of projective varieties with large first dynamical degree. We also study ergodic theory for constructible topology. For example, we prove the equidistribution of backward orbits for finite flat endomorphisms with large topological degree. As applications, we give a simple proof for weak dynamical Mordell-Lang and prove a counting result for backward orbits without multiplicities. This gives some applications for equidistributions on Berkovich spaces.
\end{abstract}


\section{Introduction}
Let $\bk$ be an algebraically closed field.  In this paper, most of the time (from Section 2 to Section 4),  we are mainly interested in the case $\Char\, \bk>0$.

Many problems in arithmetic dynamics, such as Dynamical Mordell-Lang conjecture, Zariski dense orbit conjecture are proposed in characteristic $0$. Indeed, their original statements do not hold in positive characteristic. But their known counter-examples often involve some Frobenius actions or some group structures. 
We suspect that the original statement of these conjecture still valid for ``general" dynamical systems in positive characteristic.  

\medskip

The $p$-adic interpolation lemma (\cite[Theorem 1]{Poonen2014} and \cite[Theorem 3.3]{Bell2010}) is a fundamental tool in arithmetic dynamics.
It has important applications in Dynamical Mordell-Lang and Zariski dense orbit conjecture \cite{Bell2016, Bell2010, Amerik2008, E.Amerik2011, Xie2019}.
But this lemma does not work in positive characteristic. Because this, some very basic cases of Dynamical Mordell-Lang and Zariski dense orbit conjecture are still open in positive characteristic. We hope that some corollaries of the $p$-adic interpolation lemma  still survive in positive characteristic. For this, I propose the following conjecture.
\begin{con}Set $K:=\overline{\F_p}((t))$ and $K^{\circ}=\overline{\F_p}[[t]]$ its valuation ring.
Let $f: (K^{\circ})^r\to (K^{\circ})^r$ be an analytic automorphism satisfying 
$f=\id \mod t.$ If there is no $n\geq 1$ such that $f^n=\id$, then the $f$-periodic points are not dense in $(K^{\circ})^r$ w.r.t.  $t$-adic topology.
\end{con}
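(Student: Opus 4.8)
The plan is to reduce to a problem about a single analytic function in one variable by localizing at a periodic point, and then to exploit the structure of the iterates modulo powers of $t$ to derive a contradiction with the assumption that $f$ is not of finite order.

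First I would suppose, for contradiction, that the periodic points are dense in $(K^{\circ})^r$. Since $f \equiv \id \bmod t$, write $f = \id + t\cdot g$ where $g\colon (K^{\circ})^r \to (K^{\circ})^r$ is analytic. The key elementary observation is that, working modulo $t^{n+1}$, the reduction $f_n$ of $f$ on $(K^{\circ}/t^{n+1})^r$ is a unipotent-like automorphism: by an easy induction one sees that the iterates $f^m$ modulo $t^{n+1}$ satisfy a polynomial identity in $m$, namely $f^m \equiv \id + \sum_{j=1}^{n} \binom{m}{j} D_j \bmod t^{n+1}$ for suitable $t$-adically vanishing ``divided power'' operators $D_j$ (this is precisely the $p$-adic-interpolation phenomenon, which does survive modulo each fixed power of $t$ even in characteristic $p$, because there the binomial coefficients $\binom{m}{j}$ for $j \le n$ are genuine integers and the analytic obstruction does not yet intervene). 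Consequently, for each fixed $n$ the map $m \mapsto (f^m \bmod t^{n+1})$ extends to a function that is ``polynomial in $m$'' of bounded degree.

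Next I would use density of periodic points together with compactness of $(K^{\circ})^r$. If periodic points are dense, then for every $n$ the induced map $f_n$ on the finite set $(K^{\circ}/t^{n+1})^r$ — wait, that set is not finite since $\overline{\F_p}$ is infinite; so instead I would argue on the level of the parameter: the set of pairs $(x, N)$ with $f^N(x) = x$ is Zariski-constructible in an appropriate sense, and density forces, for each $n$, the existence of arbitrarily large $N$ (in fact a single $N = N(n)$) with $f^N \equiv \id \bmod t^{n+1}$ on a Zariski-dense set of $x$, hence identically. Here I would invoke the polynomial-in-$m$ description: if $f^N \equiv \id \bmod t^{n+1}$, then the polynomial $\sum_{j=1}^n \binom{m}{j} D_j$ vanishes at $m = N$; combined with its vanishing at $m = 0$ and the bounded degree, if this happens for infinitely many $N$ we conclude $D_j = 0 \bmod t^{n+1}$ for all $j$, i.e. $f \equiv \id \bmod t^{n+1}$. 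Letting $n \to \infty$ gives $f = \id$, contradicting the hypothesis (in particular contradicting that no iterate is the identity).

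The main obstacle is the step bridging ``periodic points dense in the $t$-adic topology'' and ``$f^{N} = \id$ modulo a fixed power of $t$ for some large $N$''. Density only gives, near each point and each precision $t^{n+1}$, \emph{some} periodic point with \emph{some} period, and a priori these periods are unbounded and unrelated as the point varies; one cannot naively intersect infinitely many periodicity conditions. I expect to handle this by a constructibility/compactness argument: stratify $(K^{\circ})^r$ by the (residue-field-defined) locus where $f^N \equiv \id \bmod t^{n+1}$, note these loci are closed and their union over all $N$ is dense, and use that the residue scheme $(\overline{\F_p})^r$ is Noetherian to extract a single $N$ working on a dense open, hence everywhere by continuity. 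An alternative route to the same end, perhaps cleaner, is to bound the period: show that if $x$ is $f$-periodic with period $N$ and $f \equiv \id \bmod t$, then $N$ is divisible only by $p$ (the order of $f_n$ on each finite quotient is a power of $p$), and that a uniform bound on $n$ forces a uniform bound on the $p$-adic valuation of $N$; pushing $n \to \infty$ then directly yields $f^{p^k} = \id$ for some $k$, again contradicting the hypothesis. Once this bridge is in place, the rest is the routine binomial-expansion bookkeeping sketched above.
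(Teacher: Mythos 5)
This statement is an open conjecture in the paper: the author explicitly proposes it because the $p$-adic interpolation lemma fails in positive characteristic, and no proof is given anywhere in the text. So your proposal cannot be compared with a proof of the paper's; it must stand on its own, and it does not.

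The fatal step is the one where you conclude, from $f^N\equiv \id \bmod t^{n+1}$ for infinitely many $N$, that all the operators $D_j$ vanish mod $t^{n+1}$ and hence $f\equiv \id \bmod t^{n+1}$. This inference is exactly what breaks in characteristic $p$. The expansion $f^m\equiv \id+\sum_{j\le n}\binom{m}{j}D_j$ does survive formally, but the coefficients $\binom{m}{j}$ are then reduced modulo $p$, and the functions $m\mapsto \binom{m}{j}\bmod p$ vanish on infinite sets of integers (e.g.\ $\binom{m}{1}=m$ vanishes for all $m\equiv 0\bmod p$); a nonzero ``polynomial'' $\sum_j\binom{m}{j}c_j$ in characteristic $p$ can therefore vanish at infinitely many integers. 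Concretely, take $r=1$ and $f(x)=(1+t)x$. Then $f\equiv\id\bmod t$, no iterate of $f$ is the identity, and $f^{p^k}(x)=(1+t^{p^k})x\equiv x\bmod t^{n+1}$ for every $k$ with $p^k>n$. So for each fixed $n$ there are infinitely many $N$ with $f^N\equiv\id\bmod t^{n+1}$, yet $f\not\equiv\id\bmod t^2$; your limiting argument would wrongly conclude $f=\id$. (This $f$ is of course consistent with the conjecture — its only periodic point is $0$ — but it kills your mechanism.) Your ``alternative route'' suffers from the same problem: the order of $f$ modulo $t^{n+1}$ is typically an unbounded power of $p$ as $n\to\infty$, so bounding the $p$-adic valuation of the periods is precisely the hard content of the conjecture, not a routine consequence of the setup. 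The bridge from ``periodic points are $t$-adically dense'' to ``a single $N$ works modulo $t^{n+1}$ everywhere'' is also unjustified (the loci you call closed and the Noetherian extraction are not set up in any actual category of spaces here), but even granting it, the argument collapses at the interpolation step.
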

On the other hand, we observed that, under certain assumption on the complexity of $f$, a global argument using height can be used to replace the local argument using the $p$-adic interpolation lemma.
We generalise the notion of arithmetic degree and prove some basic properties of it in positive characteristic. In particular, we generalise Kawaguchi-Silverman-Matsuzawa's upper bound for arithmetic degree \cite[Theorem 1.4]{Matsuzawa2020a} in positive characteristic. With such notion, we apply our observation to dynamical system in positive characteristic. In particular, we prove the Dynamical Mordell-Lang and Zariski dense orbit conjecture in some cases (see Section \ref{subintrodml} and \ref{subintrozdo}).

\medskip

Another aim of this paper is to study the ergodic theory on algebraic variety w.r.t constructible topology. Using this, we get some equidistribution reults and apply them to get some 
weak verisons of Dynamical Mordell-Lang, Manin-Mumford conjecture in arbitrary characteristic.   This also gives some applications for equidistributions on Berkovich spaces.
\subsection{Dynamical Mordell-Lang conjecture}\label{subintrodml}
Let $X$ be a  variety over $\bk$ and $f: X\dashrightarrow X$ be a rational self-map.

\begin{defi} We say $(X,f)$ satisfies the \emph{DML} property if for every $x\in X(\bk)$ whose $f$-orbit is well defined and every subvariety $V$ of $X$, the set $\{n\geq 0|\,\, f^n(x)\in V\}$ is a finite union of arithmetic progressions. 
\end{defi}
Here an arithmetic progression is a set of the form $\{an + b|\,\, n\in \N\}$ with $a,b \in \N$ possibly with $a = 0$.

\begin{dmlcon}If $\Char\,\bk=0$, then $(X,f)$ satisfies the DML property.
\end{dmlcon}

It was proved when $f$ is unramified \cite{Bell2010} and when $f$ is an endomorphism of $\A^2_{\overline{\Q}}$ \cite{Xie2017a}.
See \cite{Bell2016, Ghioca2018} for other known results.
In general, this conjecture does not hold in positive characteristic. An example is \cite[Example 3.4.5.1]{Bell2016} as follows (see \cite{Ghioca2019a,Corvaja2021} for more examples).
\begin{exe}\label{exenotdml} Let $\bk=\overline{\F_p(t)}$, $f: \A^2\to \A^2$ be the endomorphism defined by $(x,y)\mapsto (tx, (1-t)y).$
Set $V:=\{x-y=0\}$ and $e=(1,1).$ Then $\{n\geq 0|\,\, f^n(e)\in V\}=\{p^n|\,\, n\geq 0\}.$
\end{exe}

In \cite[Conjecture 13.2.0.1]{Bell2016}, Ghioca  and Scanlon proposed a variant of the Dynamical Mordell-Lang conjecture in positive characteristic (=$p$-DML), which asked $\{n\geq 0|\,\, f^n(x)\in V\}$ to be a finite union of arithmetic progressions along with finitely many 
sets taking form $$\{\sum_{i=1}^mc_ip^{l_in_i}|\,\, n_i\in \Z_{\geq 0}, i=1,\dots,m\}$$
where $m\in \Z_{>1}, k_i\in \Z_{\geq 0}, c_i\in \Q.$ See \cite{Ghioca2019a,Corvaja2021} for known results of $p$-DML. However, we suspect that for a ``general" dynamical system in positive characteristic still has the DML property.

\begin{thm}\label{dmlsurface}Let $X$ be a projective surface over $\bk.$ Let $f: X\to X$ be an automorphism. Assume that $\la_1(f)>1$. Then the pair $(X,f)$ satisfies the DML property.  
\end{thm}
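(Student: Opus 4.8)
The plan is to reduce the statement to a known arithmetic input: the dynamical Mordell--Lang property for automorphisms of positive entropy is known in characteristic $0$ (via $p$-adic interpolation, as in the work of Bell--Ghioca--Tucker and Xie), so the only new content is the positive characteristic case, and there one wants to spread out and specialize. First I would fix the automorphism $f\colon X\to X$ of a projective surface with $\lambda_1(f)>1$, a point $x\in X(\bk)$, and a curve (the only interesting case for $V$) $C\subset X$; replace $\bk$ by a finitely generated subfield over which $X$, $f$, $x$, $C$ are all defined, so we have a model $\sX\to S$ over a finitely generated $\F_p$-algebra $S$ with an automorphism $f_S$ over $S$, sections $x_S$ and a relative curve $\sC$. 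The set $E=\{n\geq 0\mid f^n(x)\in C\}$ is what we must show is a finite union of arithmetic progressions.

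The key point is to use the positive-entropy hypothesis globally rather than locally. Because $\lambda_1(f)>1$, for a suitable $f^*$-invariant class (the nef classes $\theta^\pm$ spanning the eigenlines of $f^*$ on $N^1(X)_\R$ for the eigenvalues $\lambda_1^{\pm1}$), one has an arithmetic/Weil height $h_{\theta^+}$ on $X(\overline{\bk})$ with $h_{\theta^+}(f(y))$ comparable to $\lambda_1 h_{\theta^+}(y)$ up to bounded error, and similarly for $\theta^-$ under $f^{-1}$. Thus the arithmetic degree of $x$ under $f$ equals $\lambda_1>1$ whenever the forward orbit is infinite; this is exactly the situation covered by the generalisation of the Kawaguchi--Silverman--Matsuzawa bound proved earlier in the paper. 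The strategy is then: if $E$ is infinite, the orbit of $x$ is infinite, so both $h_{\theta^+}(f^n(x))\to\infty$ and (running backwards, using that $n\in E$ forces $f^n(x)$ into the curve $C$) we get growth/decay estimates along the subsequence lying on $C$. On the fixed curve $C$ the induced dynamics is on a one-dimensional object, where height growth is linear in degree; confronting exponential growth in $\lambda_1$ on $X$ with the at-most-linear behaviour forced by lying on $C$ squeezes the indices in $E$ to be very sparse. The aim is to conclude that $E$ is finite unless $C$ is periodic, and when $C$ is periodic to pass to the iterate $f^k$ stabilizing $C$ and run an induction on dimension, reducing to DML for the automorphism $f^k|_C$ of the curve $C$ — which holds because on a curve every automorphism is (a finite extension away from) an isometry for the canonical height and DML on curves is classical.

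The main obstacle I anticipate is precisely the dichotomy "$C$ periodic vs.\ $C$ wandering". When $C$ is not periodic, one must rule out that infinitely many iterates $f^{n_i}(x)$ land on $C$; here the positive-entropy height machinery should give that the $n_i$ grow, but turning "the $n_i$ are sparse" into "the $n_i$ form finitely many arithmetic progressions" — in fact into "there are only finitely many $n_i$" — requires an honest gap argument. I expect this to come from comparing $h_{\theta^+}\!\bigl(f^{n}(x)\bigr)\asymp \lambda_1^{\,n}$ with the observation that the points $f^{n_i}(x)$ all lie on the single curve $C$, whose own height function grows only polynomially along any orbit and whose intersection with the orbit is controlled by a Bezout-type bound on $\sX$; the tension between exponential and polynomial forces finiteness. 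When $C$ is periodic, replacing $f$ by an iterate and restricting to $C$ is routine, but one must check the reduction is clean: the returns of the orbit of $x$ to the finitely many components of $\bigcup f^j(C)$ split $E$ into finitely many residue classes mod $k$, on each of which we are reduced to DML for an automorphism of a (possibly singular, but one-dimensional) curve, which is known. Assembling these pieces — the height estimates from the KSM-type bound proved earlier, the Bezout bound, and the curve case — yields that $E$ is a finite union of arithmetic progressions.
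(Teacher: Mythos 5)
Your overall architecture (invariant nef eigenclasses $\theta^*,\theta_*$, canonical heights, a dichotomy between periodic and non-periodic curves) matches the paper's, but two of your key steps have genuine gaps. First, you assert that the arithmetic degree of $x$ equals $\lambda_1(f)$ whenever the orbit is infinite; the KSM-type bound proved in the paper gives only the upper bound $\overline{\alpha}_f(x)\leq\lambda_1(f)$, and the lower bound is false in general over function fields (there are points with $h^+(x)=h^-(x)=0$ and infinite orbit), so you cannot assume $h_{\theta^+}(f^n(x))\asymp\lambda_1^n$. The paper avoids this entirely: it uses the \emph{decreasing} canonical height, $h^-(f^n(x))=\lambda_1^{-n}h^-(x)$, to conclude that the returns $f^n(x)\in C$ have bounded ample height on $C$ --- but only when $(C\cdot\theta_*)>0$, which is the correct hypothesis for Lemma \ref{lemcomheigonc}. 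Your ``periodic vs.\ wandering'' dichotomy does not supply this numerical condition; the paper instead splits on $(C\cdot\theta_*)>0$ versus $(C\cdot\theta_*)=0$, and in the latter case proves $C$ is automatically periodic via a Galois-conjugation trick ($\sigma(\lambda_1)=\lambda_1^{-1}$ forces $\theta_*=\sigma(\theta^*)$, hence $(C\cdot\theta_*)=0\Leftrightarrow(C\cdot\theta^*)=0\Leftrightarrow(C\cdot(\theta^*+\theta_*))=0$, and $\theta^*+\theta_*$ is big and nef, so only finitely many such curves exist and they are permuted by $f$).

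Second, and more seriously, your ``tension between exponential and polynomial forces finiteness'' step silently invokes Northcott: in positive characteristic the ground field is $\overline{K(B)}$ with $K$ algebraically closed, and bounded height plus bounded degree does \emph{not} imply finiteness. This is precisely the heart of the positive-characteristic case, and the paper handles it with an induction on $\trd_{\overline{\F_p}}\bk$: the sections of $\sX\to B$ of height $\leq M$ form a quasi-projective moduli space $\sM_M$, the evaluation map at finitely many general closed points $b_1,\dots,b_L$ of $B$ is quasi-finite, and the inductive hypothesis applied to the specialized systems $(X_{b_i},f_{b_i},C_{b_i})$ (which still have $\lambda_1=\lambda_1(f)>1$ and $(C_{b_i}\cdot\theta_{*,b_i})>0$) shows the image of the return set under each evaluation is finite, hence the return set itself is finite. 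Without this specialization argument (or some substitute for Northcott), your proof does not close in positive characteristic.
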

Here $\la_i(f)$ is the $i$-th dynamical degree of $f$ (see Section \ref{subsectiondydeg}). 
The following is a similar result for birational endomorphisms of $\A^2.$ In \cite[Theorem A]{Xie2014}, it is stated in characteristic $0$. But when $\la_1(f)>1$, its proof works in any characteristic. 
\begin{thm}\cite[Theorem A]{Xie2014}\label{thmdmlat} Let $f:\A^2\to \A^2$ be a birational endomorphism over $\bk$. If $\la_1(f)>1$, $(\A^2,f)$ satisfies the DML property.
\end{thm}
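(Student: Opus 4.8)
Since $f$ is everywhere defined on $\A^2$, every forward orbit is well defined, so fix $x\in\A^2(\bk)$ and a subvariety $V$. The plan is: first make the standard structural reductions; then localize the problem at a distinguished attracting point of $f$ at infinity; finally, replace the usual $p$-adic interpolation step by an algebraic intersection-multiplicity estimate that exploits $\la_1(f)>1$. \emph{Reductions.} If $x$ is $f$-preperiodic the orbit is finite and $\{n:f^n(x)\in V\}$ is trivially a finite union of arithmetic progressions, so assume $x$ is not preperiodic. Let $Z$ be the Zariski closure of the orbit; after replacing $f$ by an iterate and $x$ by a point of its orbit one may assume $Z$ is irreducible and $f$-invariant and the orbit is dense in $Z$ (the admissible $n$ are then grouped into residue classes, each contributing a translate of the return set for the iterate). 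If $\dim Z\le 1$ we are reduced to the dynamical Mordell--Lang property for a self-map of a curve, which holds over any field; so assume $Z=\A^2$, i.e.\ the orbit is Zariski dense, and --- the point case being a special case, the surface case trivial --- take $V$ an irreducible curve not containing the orbit tail.

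\emph{An attracting point at infinity.} Here I would use the classification of birational morphisms of $\A^2$ together with $\la_1(f)>1$ to build a smooth projective compactification $X\supset\A^2$ with $X\setminus\A^2$ a curve, on which $f$ extends to an algebraically stable birational self-map whose action on $N^1(X)$ has $\la_1(f)$ as a simple dominant eigenvalue with a nef eigenclass $\theta_+$; moreover $\theta_+$ is ``concentrated'' at a fixed point $q\in X\setminus\A^2$, in the sense that after spreading out over a finitely generated subfield $k_0\subset\bk$ and choosing a place $v$, $f$ restricts to a morphism on a $v$-adic neighbourhood $U$ of $q$ and contracts $U$ towards $q$ at a definite exponential rate governed by $\la_1(f)$. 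The dynamical input I want is: a non-preperiodic point with Zariski dense orbit cannot have orbit of bounded $h_{\theta_+}$-height (using $h_{\theta_+}\circ f\ge\la_1(f)\,h_{\theta_+}-C$ and a Northcott-type input adapted to positive characteristic, or a direct valuative argument), hence there is a place $v$ and an $N$ with $f^N(x)\in U(k_v)$, and then $f^n(x)\to q$ $v$-adically, exponentially fast, as $n\to\infty$.

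\emph{Local analysis at $q$ --- the heart.} Suppose $f^{n_i}(x)\in V$ for infinitely many $i$. Since $f^{n_i}(x)\to q$ and $\overline V\subset X$ is closed, a branch of $\overline V$ passes through $q$. Put $f$ in a formal normal form at $q$: because $\la_1(f)>1$ there is a genuine spectral gap (one strongly contracting direction, or a monomial model with multiplier matrix of spectral radius $\la_1(f)$), so $f$ near $q$ is linearizable or semi-conjugate to an explicit model, and the orbit of $x$ converges to $q$ tangent to a well-defined $f$-invariant analytic branch $B$. If $B$ is not a branch of $\overline V$, the orbit leaves $\overline V$ for $n\gg0$ and the return set is finite. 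If $B\subseteq\overline V$, then $B$ is the germ of an $f$-invariant curve and, passing to its normalization, we are back to the curve case. In the remaining, genuinely two-dimensional, situation I would bound the local intersection multiplicity $i\!\left(f^n(x);\,\overline{\{f^m(x)\}_{m\ge0}},\,\overline V\right)$ at $q$: in the normal-form coordinates both the orbit closure and $\overline V$ are explicit, they meet properly, and these multiplicities are eventually dictated by a single linear recurrence in $n$ coming from the multiplier; a growth/boundedness dichotomy for that recurrence forces $\{n:f^n(x)\in V\}$ to be finite up to finitely many arithmetic progressions. Every ingredient here --- Zariski closures, formal normal forms under a spectral gap, intersection multiplicities of proper intersections --- is algebraic, and this is exactly why the argument survives in characteristic $p$: one never appeals to the $p$-adic interpolation lemma, which is needed only to treat multipliers on the unit circle, a case excluded by $\la_1(f)>1$.

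\emph{Main obstacle.} The two delicate points are (i) producing $X$ and proving that $\la_1(f)>1$ forces a \emph{contracting} fixed point $q$ with the quantitative rate above --- this rests on the fine structure of birational morphisms of $\A^2$ and on the induced action on the valuative tree at infinity --- and (ii) the no-lingering alternative, i.e.\ ruling out a non-preperiodic Zariski-dense orbit of bounded height: over a number field Northcott disposes of this at once, but over a general field of characteristic $p$ one must supply the relative/valuative substitute, and this is where the positive-characteristic case genuinely has to be checked. Granting (i) and (ii), the intersection-multiplicity bookkeeping of the third paragraph is the remaining labour, but it is routine once the normal forms are in hand.
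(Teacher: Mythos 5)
The paper does not actually reprove this statement: it cites \cite[Theorem A]{Xie2014} and observes that, when $\la_1(f)>1$, the characteristic-zero proof given there goes through unchanged. Your sketch is broadly aligned with the strategy of that proof (an algebraically stable compactification with a distinguished attracting fixed point $q$ at infinity coming from the eigenvaluation, convergence of unbounded orbits to $q$, and a local analysis at $q$), but as written it is an outline rather than a proof: the two items you yourself label as the ``main obstacle,'' together with the deferred ``bookkeeping'' of your third paragraph, are precisely the content of the theorem.

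Concretely: (a) the ``Northcott-type input adapted to positive characteristic'' does not exist off the shelf --- over $\overline{K(B)}$ a set of bounded height need not be finite --- and the present paper shows what is actually required in the analogous situation (Lemma \ref{lemcthqezerfinite}): an induction on the transcendence degree of $\bk$ over its prime subfield, combined with a specialization argument through the quasi-projective moduli space of sections of bounded degree and the quasi-finiteness of the evaluation maps $e_L$. Nothing of this sort appears in your sketch. (b) The local step at $q$ is asserted, not performed: in characteristic $p$ one cannot freely invoke linearization or a ``formal normal form under a spectral gap'' (resonances and inseparability both intervene); the claim that the orbit approaches $q$ tangent to a single $f$-invariant branch is unjustified; and the assertion that the local intersection multiplicities with $\overline{V}$ are ``eventually dictated by a single linear recurrence'' whose dichotomy yields arithmetic progressions is exactly the kind of statement that fails in the known positive-characteristic counterexamples (Example \ref{exenotdml}, where the return set is $\{p^n\}$). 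The mechanism that excludes such return sets must come quantitatively from $\la_1(f)>1$, and your sketch never extracts it. Until (a) and (b) are supplied, the argument is a plan, not a proof; the efficient route here is the paper's own, namely to check line by line that the actual argument of \cite{Xie2014} uses no characteristic-zero input once $\la_1(f)>1$ is assumed.
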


\subsection{Zariski dense orbit conjecture}\label{subintrozdo}
Let $X$ be a variety over $\bk$ and $f: X\dashrightarrow X$ be a dominant rational self-map. 
Denote by $\bk(X)^f$ the field of $f$-invariant rational functions on $X.$
Let $X_f(\bk)$ is the set of $X(\bk)$ whose orbit is well-defined.
For $x\in X_f(\bk)$, $O_f(x)$ is the orbit of $x.$

\begin{defi}
We say $(X,f)$ satisfies the \emph{SZDO} property if  there is $x\in X_f(\bk)$ such that $O_f(x)$ is Zariski dense in $X.$

We say $(X,f)$ satisfies the \emph{ZDO} property if 
either $\bk(X)^f\neq \bk$ or it satisfies SZDO property.
\end{defi}

The Zariski dense orbit conjecture was proposed by Medvedev and Scanlon \cite[Conjecture 5.10]{Medvdevv1}, by Amerik, Bogomolov and Rovinsky \cite{E.Amerik2011} and strengthens a conjecture of Zhang \cite{zhang}.
\begin{zdocon}\label{conexistszdo}If $\Char\,\bk=0$, then $(X,f)$ satisfies the ZDO property.
\end{zdocon}

This conjecture was proved for endomorphisms of projective surfaces \cite{Jia2020, Xie2019}, 
endomorphisms of $(\P^1)^N$ \cite{Medvdev,Xie2019} and endomorphisms of $\A^2$ \cite{Xie2017}.
See \cite{Amerik2008,Amerik,E.Amerik2011,Fakhruddin2014,Bell2017,Bell2017a,Ghioca2017a,Ghioca2018b,Ghioca2019, Bell, Jia2021}
for other known results. 

\medskip

The original statement of Zariski dense orbit conjecture is not true in characteristic $p>0$. It is completely wrong over $\bk=\overline{\F_p}$ and has counter-examples even when 
$\trd_{\overline{F_p}} \bk\geq 1$ (see \cite[Section 1.6]{Xie2019} and \cite[Remark 1.2]{Ghioca}).
Concerning the variants of the Zariski dense orbit conjecture in positive characteristic proposed in \cite[Section 1.6]{Xie2019} and \cite[Conjecture 1.3]{Ghioca}, 
we get the following result.
\begin{pro}\label{protautzdo}Let $K$ be an algebraically closed field extension of $\bk$ with $\trd_{\bk}K\geq \dim X$.
Then $(f_K,X_K)$ satisfies the ZDO property. Here $X_{K}$ and $f_K$ are the base change by $K$ of $X$ and $f.$
\end{pro}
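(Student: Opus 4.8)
The plan is to reduce the Zariski dense orbit property for $(X_K,f_K)$ to a statement about the generic point of $X$ and its orbit, and then to prove an algebraic lemma on field embeddings that takes over the role of the $p$-adic interpolation lemma. First, a dichotomy: if $\bk(X)^f\neq\bk$, pick a non-constant $h\in\bk(X)^f$ and view it as a dominant rational map $X\dashrightarrow\P^1$; its base change $X_K\dashrightarrow\P^1_K$ is still dominant, so $\pi_K^*h\notin K$ (with $\pi_K\colon X_K\to X$ the projection), while $f_K^*(\pi_K^*h)=\pi_K^*(f^*h)=\pi_K^*h$; hence $K(X_K)^{f_K}\neq K$ and $(X_K,f_K)$ has the ZDO property. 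So from now on assume $\bk(X)^f=\bk$, and aim for the stronger SZDO property.

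Put $F:=\bk(X)$ and $X_F:=X\times_\bk F$. This variety carries the tautological $F$-point $\delta$ coming from the identity $\bk(X)\to F$ — concretely, the generic point of $X$ regarded as an $F$-point of $X_F$. Because $f$ is dominant, $\delta$ and all its iterates $\delta_n:=f_F^n(\delta)$ avoid the indeterminacy locus of $f$, so $O_{f_F}(\delta)$ is well defined; the key claim is that $\bk(X)^f=\bk$ forces $O_{f_F}(\delta)$ to be Zariski dense in $X_F$. Granting the claim, the Proposition follows: since $\trd_\bk K\ge\dim X=\trd_\bk F$ and $K$ is algebraically closed, there is a $\bk$-embedding $\sigma\colon F\hookrightarrow K$, and base changing $\delta$ along $\sigma$ gives a point $\tilde x\in X_K(K)$ whose $f_K$-orbit is the base change of $O_{f_F}(\delta)$; Zariski density is preserved by field base change (expand a hypothetical equation over $K$ killing the orbit in a basis of $K$ over $\sigma(F)$: each coordinate then kills every $\delta_n$, hence vanishes), so $O_{f_K}(\tilde x)$ is Zariski dense in $X_K$.

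It remains to prove the key claim, which is the heart of the matter. Restricting to an affine open containing the generic point, write an arbitrary element of the ideal of $\overline{O_{f_F}(\delta)}$ as $\sum_{i=1}^r a_i\otimes b_i$ with $a_i\in\mathcal O(X)\subseteq F$ and $b_i\in F$; vanishing at every $\delta_n$ becomes the system
\[
\textstyle\sum_{i=1}^r\phi^n(a_i)\,b_i=0\ \ (n\ge0),\qquad\phi:=f^*\colon F\hookrightarrow F,\quad F^\phi=\{x:\phi(x)=x\}=\bk .
\]
So the claim reduces to the algebraic statement: \emph{for a field embedding $\phi$ of $F$ with $F^\phi=\bk$, such a system has no solution with $\sum_i a_i\otimes b_i\neq 0$ in $F\otimes_\bk F$}. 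The intended proof is a descent on $r$: choose a counterexample with $r$ minimal, so that $(a_i)$ and $(b_i)$ are each $\bk$-linearly independent; applying $\phi$ to one equation and combining $\bk$-linearly with the next produces systems of the same shape, with the $a_i$ replaced by $\phi(a_i)$ and the $b_i$ replaced by $b_i-\mu\phi(b_i)$ for $\mu\in\bk^\times$, whose term counts must respect minimality of $r$. On the other hand, the vectors $v_n:=(\phi^n(a_i))_i\in F^r$ all lie in the $F$-hyperplane $\{\sum b_ix_i=0\}$ while $v_{n+1}=\phi(v_n)$ coordinatewise, which confines them to a proper subspace and forces a linear recurrence on $(v_n)$. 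Feeding this back — using that $\bk$ is algebraically closed to split the relevant characteristic polynomial, and $F^\phi=\bk$ (equivalently, since $\bk$ is algebraically closed in $F$, $F^{\phi^m}=\bk$ for all $m$) to force coefficients and coordinate-ratios down into $\bk$ — produces a nonzero $\bk$-linear relation among the $b_i$, contradicting their independence.

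The main obstacle is making that algebraic lemma fully rigorous. In positive characteristic one cannot finish by writing the solutions of the recurrence as explicit $\mathrm{poly}(n)\,\alpha^n$ combinations, and when $f$ is only dominant (not birational) the embedding $\phi$ is merely injective, so the linear algebra has to be carried out in the $\phi$-semilinear setting; tracking the interaction between the ``shift-and-subtract'' reduction and the recurrence so that the number of terms actually drops is the delicate point. Everything else — the dichotomy, the passage to the point $\delta$, and the base-change step — is routine.
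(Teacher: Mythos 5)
Your overall architecture coincides with the paper's up to the decisive step: the paper also reduces (via Lemma \ref{l_inv_fun_field}) to the case $\bk(X)^f=\bk$, also works with the tautological point (the generic point of $X$ viewed as a point of $X_K$ for $K=\overline{\bk(X)}$, realized there as the diagonal of $X\times X$), and also notes that no iterate of this point lies in a proper subvariety defined over $\bk$. The genuine gap is in your ``key claim''. You reduce it to the algebraic lemma that a nonzero $\sum a_i\otimes b_i\in F\otimes_{\bk}F$ cannot satisfy $\sum_i\phi^n(a_i)b_i=0$ for all $n$ when $F^\phi=\bk$, and you then only sketch a descent whose crucial step does not work as written: to cancel the $i=r$ term between $\sum\phi^{n+1}(a_i)b_i=0$ and $\phi\bigl(\sum\phi^n(a_i)b_i\bigr)=\sum\phi^{n+1}(a_i)\phi(b_i)=0$ you must take $\mu=b_r/\phi(b_r)$, which lies in $F$ and not, as you write, in $\bk^\times$; with $\mu\notin\bk$ the new tuple $(b_i-\mu\phi(b_i))$ carries no $\bk$-linear dependence information, so minimality of $r$ yields nothing, and the ``linear recurrence'' on $v_n=(\phi^n(a_i))_i$ is a $\phi$-twisted recurrence over $F$ for which there is no spectral decomposition to split (especially with $\phi$ non-surjective and in characteristic $p$). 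You flag this yourself, but that is precisely the entire content of the statement: without the lemma nothing is proved.

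The paper sidesteps this semilinear algebra entirely by a geometric argument. The heart is Lemma \ref{leminvaroverk}: every \emph{maximal} irreducible $f_K$-invariant subvariety $V\subsetneq X_K$ is defined over $\bk$. This is proved by spreading $V$ out to an $f\times\id$-invariant $V_B\subseteq X\times B$ over a $\bk$-model $B$ of its field of definition, and showing that if $V_B$ dominates both factors then cutting $V_B$ by general very ample divisors pulled back from $B$ produces an $f^l$-invariant subvariety of $X$ of dimension $\dim X$ mapping onto $X$ and onto a positive-dimensional base, which forces a nonconstant element of $\bk(X)^{f^l}$ and contradicts $\bk(X)^f=\bk$ via Lemma \ref{leminvratfunite}. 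Given this, a maximal-dimensional irreducible component $W$ of the orbit closure of the tautological point is periodic (hence invariant under an iterate) and is not contained in any proper subvariety defined over $\bk$, so $W=X_K$. If you want to salvage your route, you should either prove your algebraic lemma in full (it is a nontrivial statement, essentially equivalent to the proposition itself) or replace it by an argument of this geometric type.
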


The following example shows that the assumption $\trd_{\bk}K\geq \dim X$ is sharp. 
\begin{exe}Let $X$ be a variety over $\bk:=\overline{\F_p}$ of dimension $d\geq 1$. Assume that $X$ is defined over $\F_p$. Let $F: X\to X$ be the Frobenius endomorphism. 
It is clear that $\overline{\F_p}(X)^F=\overline{\F_p}$. For every algebraically closed field extension $K$ of $\bk$ with $\trd_{\bk}K\leq d-1$, and every $x\in X_K(K)$, $O_{F_K}(x)$ is not Zariski dense in $X_K.$
\end{exe}

On the other hand, the known counter-examples often involve some Frobenius actions. 
See \cite[Theorem 1.5, Question 1.7]{Ghioca} for  this phenomenon.
We suspect that when $\trd_{\overline{\F_p}} \bk\geq 1,$
a ``general" dynamical system in positive characteristic still have the ZDO property.
Applying arguments using height, we get the following results.

\begin{thm}\label{thmendononpre}Assume that $\Char\, \bk=p>0$ and $\trd_{\overline{F_p}} \bk\geq 1.$ 
Let $f: X\to X$ be a dominant endomorphism of a projective variety. If $\la_1(f)>1$, then for every nonempty Zariski open subset $U$ of $X$, there is $x\in U(\bk)$ with infinite orbit and $O_f(x)\subseteq U$.
\end{thm}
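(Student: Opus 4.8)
The plan is to substitute for the $p$-adic interpolation lemma the canonical height attached to a nef eigenclass, and then to produce the desired point either as the generic point of a well-chosen curve or by a Northcott-type pigeonhole argument. First I would reduce to $X$ irreducible (replace $X$ by an $f$-periodic component on which the relevant dynamical degree is still $>1$) and descend $(X,f,U)$ to a finitely generated subfield $\bk_0\subseteq\bk$ with $\trd_{\overline{\F_p}}\bk_0\ge 1$; it then suffices to find the point in $U(\overline{\bk_0})\subseteq U(\bk)$. Fix an ample $H$ on $X$ and a Weil height $h=h_{X,H}\ge 1$ relative to $\bk_0$. Since $\la_1(f)$ is the spectral radius of $f^{*}$ on $N^1(X)_{\R}$ and $f^{*}$ preserves the nef cone, Perron--Frobenius gives a nonzero nef class $\theta$ with $f^{*}\theta=\la_1(f)\,\theta$; normalizing $H$ so that $H-\theta$ is nef and telescoping $\la_1(f)^{-n}h_{\theta}\circ f^{n}$ produces a canonical height $\hat h_{\theta}\ge 0$ with $\hat h_{\theta}\circ f=\la_1(f)\hat h_{\theta}$ and $\hat h_{\theta}\le h+O(1)$ (this is precisely the kind of basic property of heights in positive characteristic developed earlier in the paper, and $\la_1(f)>1$ is what makes the telescoping converge). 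Then $h(f^{n}(x))\ge \la_1(f)^{n}\hat h_{\theta}(x)-O(1)$, so if $\hat h_{\theta}(x)>0$ the orbit of $x$ has unbounded height, hence is infinite. Thus the theorem reduces to: for every nonempty open $U$ there is $x\in U(\overline{\bk_0})$ with $O_f(x)\subseteq U$ and $\hat h_{\theta}(x)>0$.

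To find such an $x$, set $U_M:=\bigcap_{i=0}^{M}f^{-i}(U)$, a decreasing sequence of nonempty (hence dense) Zariski-open subsets, and $Z:=X\setminus U$. If $\bk$ is transcendental over the field of definition, choose a curve $\Gamma\subseteq X$ with $\theta\cdot\Gamma>0$ and take $x$ to be its generic point, a point of $X$ over $\bk_0(\Gamma)$ which embeds into $\bk$; then $\hat h_{\theta}(x)=\theta\cdot\Gamma>0$ automatically, and using the degree growth $\deg_{H}f^{n}(\Gamma)\ge\la_1(f)^{n}(\theta\cdot\Gamma)\to\infty$ together with a Hilbert-scheme argument one can choose $\Gamma$ so that $f^{n}(\Gamma)\not\subseteq Z$ for all $n$, i.e. $O_f(x)\subseteq U$. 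If instead $(X,f,U)$ is defined over a finitely generated $\bk_1$ with $\bk=\overline{\bk_1}$ and $(X,f)$ is isotrivial over $\overline{\bk_1}$ — the situation of the known Frobenius counterexamples — I would use the isotriviality isomorphism to reduce to an endomorphism $f_0\colon X_0\to X_0$ over $\overline{\F_p}$ with $\la_1(f_0)=\la_1(f)>1$, descend the relevant closed set to an $\overline{\F_p}$-rational proper closed $Z_0^{\flat}$ absorbing every constant curve that becomes contained in it, and again take $x$ to be the generic point of an $\overline{\F_p}$-curve $\Gamma_0$ with $\theta_0\cdot\Gamma_0>0$ and $f_0^{n}(\Gamma_0)\not\subseteq Z_0^{\flat}$ for all $n$ (possible to embed into $\bk$ because $\trd_{\overline{\F_p}}\bk\ge1$). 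In the remaining case, where $\bk$ is algebraic over a finitely generated $\bk_1$ and $(X,f)$ is non-isotrivial, the Northcott property holds: for each $M$ I would exhibit a point $x_M\in U_M(\overline{\bk_1})$ with $\hat h_{\theta}(x_M)>0$ and both $h(x_M)$ and $[\bk_1(x_M):\bk_1]$ bounded independently of $M$, and then, there being only finitely many such points, pigeonhole gives a single $x$ equal to $x_M$ for infinitely many $M$, whence $x\in\bigcap_M U_M$ and $\hat h_{\theta}(x)>0$.

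The main obstacle, shared by all the constructions above, is controlling the locus $\{\hat h_{\theta}=0\}$ (equivalently, ensuring that the chosen curve or point does not eventually fall into $Z$) when the eigenclass $\theta$ fails to be big — as happens, for instance, for automorphisms of surfaces, where $\theta^{2}=0$ — since then $\{\hat h_{\theta}=0\}$ need not be contained in a proper closed subvariety. I expect to resolve this by an induction on $\dim X$: whenever the direct construction fails, the forward orbit of every curve with $\theta\cdot\Gamma>0$ eventually lands in $Z$ (resp. in $Z_0^{\flat}$), and one then passes to a suitable $f$-quasi-invariant subvariety of that closed set on which the dynamical degree is still $>1$, the base cases $\dim X\le 1$ being treated directly. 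The other ingredients — nonemptiness of the $U_M$, the functoriality and telescoping giving $\hat h_{\theta}$, and the bookkeeping of heights and field degrees under the above constructions — I would treat as routine given the machinery already set up in the paper.
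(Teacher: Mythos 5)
Your height-theoretic core is the same as the paper's: build a canonical height $\hat h_\theta$ for the nef eigenclass $\theta$ with $f^*\theta=\la_1(f)\theta$ (the telescoping converges precisely because $\la_1(f)>\la_1(f)^{1/2}$ when $\la_1(f)>1$; this is Proposition \ref{procanheight}), and observe that $\hat h_\theta(x)>0$ forces unbounded height along the orbit, hence an infinite orbit. Up to that point you are reconstructing Proposition \ref{proextarhitd}. The genuine gap is in the remaining step, which you yourself flag as the ``main obstacle'': producing a point $x\in U(\bk)$ with \emph{both} $O_f(x)\subseteq U$ and $\hat h_\theta(x)>0$. None of your three cases closes this. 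In the ``transcendental'' case, taking $x$ to be the generic point of a curve $\Gamma$ reduces $O_f(x)\subseteq U$ to ``$f^n(\Gamma)\not\subseteq Z$ for all $n$,'' which is itself a dynamical Mordell--Lang-type statement for subvarieties; degree growth of $f^n(\Gamma)$ does not rule out containment in $Z$ (a divisor contains curves of arbitrarily large degree), and the ``Hilbert-scheme argument'' is not an argument. In the ``non-isotrivial'' case you invoke the Northcott property over a function field $K(B)$ with $K$ algebraically closed; Northcott fails there (infinitely many points of bounded height and degree), and the non-isotrivial refinements you would need are nowhere available in the paper. Finally, your fallback induction on $\dim X$ passes to an invariant subvariety of $Z$ ``on which the dynamical degree is still $>1$,'' but Proposition \ref{p:dyn_sub_var} only gives $\la_1(f|_W)\leq\la_1(f)$, so there is no reason the induction hypothesis applies.

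The paper resolves exactly this point by a specialization trick that is absent from your proposal. Spread $(X,f,U)$ out over a curve $B$, pick a general closed fiber $X_b$ over the (algebraically closed) constant field, and use Proposition \ref{proxfnonempty} to find a closed point $x_b\in U_b$ whose entire $f_b$-orbit is well defined and contained in $U_b$. Then cut a curve $W$ in the generic fiber through $x_b$ with $(W\cdot D)>0$ and use Lemma \ref{lemlocheig} and Remark \ref{remopenred} to produce points $x_n\in W(\bk)$ of unbounded height whose closures in the model pass through $x_b$. The containment $O_f(x_n)\subseteq U$ is then \emph{inherited from the special fiber}: if $f^m(x_n)\in Z$ then $f_b^m(x_b)\in Z_b$, contradicting the choice of $x_b$. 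Proposition \ref{proextarhitd} then gives $\alpha_f(x_n)=\la_1(f)>1$ for $n\gg0$ since the exceptional set has bounded height. This mechanism for forcing the orbit to stay in $U$ is the missing idea; without it (or a worked-out substitute) your proof does not go through.
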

Theorem \ref{thmendononpre} can be viewed as a weak version of \cite[Corollary 9]{Amerik} in positive characteristic.

\begin{thm}\label{thmzdosuraut}Assume that $\Char\, \bk=p>0$ and $\trd_{\overline{F_p}} \bk\geq 1.$ 
Let $f: X\to X$ be an automorphism of a projective surface. Then $(X,f)$ satisfies the ZDO property.
\end{thm}

The following result is a generalization of \cite[Theorem 1.12 (iii)]{Jia2021} in positive characteristic. 
\begin{thm}\label{thmzdola}Assume that $\Char\, \bk=p>0$ and $\trd_{\overline{F_p}} \bk\geq 1.$ 
Let $f: X\to X$ be a dominant endomorphism of a projective variety.
Assume that  $X$ is smooth of dimension $d\geq 2$, and $\la_1(f)>\max_{i=2}^d \{\la_{i}(f)\}$.
Then $(X,f)$ satisfies the SZDO property.
\end{thm}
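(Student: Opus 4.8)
\emph{Strategy and reductions.} I would follow the strategy of \cite[Theorem 1.12(iii)]{Jia2021}, replacing its two characteristic-zero inputs — Amerik's existence of non-preperiodic points and the Kawaguchi--Silverman canonical-height formalism — by the positive-characteristic substitutes furnished earlier in the paper, namely Theorem \ref{thmendononpre} and the theory of arithmetic degrees and canonical heights (including the analogue over an arbitrary field of the Kawaguchi--Silverman--Matsuzawa bound $\overline{\alpha}_f(x)\le\la_1(f)$); one also uses that dynamical degrees, their monotonicity under passage to $f$-invariant subvarieties, and the product formula for $f$-equivariant fibrations are valid in any characteristic (Section \ref{subsectiondydeg}). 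By spreading out, and since every hypothesis and the conclusion are unaffected by enlarging $\bk$ (keeping $\trd_{\overline{\F_p}}\bk\ge1$) and by replacing $f$ by an iterate (recall $\la_i(f^k)=\la_i(f)^k$), I may assume $\bk$ finitely generated over $\overline{\F_p}$ with $\trd_{\overline{\F_p}}\bk\ge1$, fix an ample $H$ and a Weil height $h_H$ on $X$, and pass to iterates freely. The gap $\la_1(f)>\max_{2\le i\le d}\la_i(f)$ yields a nonzero nef $\R$-class $\delta$ with $f^*\delta\equiv\la_1(f)\delta$ (here smoothness of $X$ enters, as in \cite{Jia2021}) together with its canonical height $\hat h_\delta\ge0$, satisfying $\hat h_\delta\circ f=\la_1(f)\hat h_\delta$ and vanishing on preperiodic points.

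\emph{The structural step.} Suppose $(X,f)$ fails SZDO, so $\overline{O_f(x)}\subsetneq X$ for all $x\in X(\bk)$. A (somewhat delicate) analysis of orbit closures shows that $X(\bk)$ is then covered by countably many families — indexed by $k\ge1$ and by components of the Chow variety of $X$ — of proper closed irreducible subvarieties $W$ with $f^k(W)\subseteq W$: for $x$ with infinite orbit, some forward iterate $f^N(x)$ lies in the closure of its own forward tail, whence $f$ permutes the finitely many components of that orbit closure bijectively and the component meeting $f^N(x)$ is $f^k$-invariant for suitable $k$, and one propagates this back to $x$ using preimages; points with finite orbit, if Zariski-dense, are treated below. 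Since an irreducible Noetherian scheme is not a countable union of proper closed subsets, one of these families dominates $X$; fixing the relevant $k$ and passing to $f^k$, we obtain a positive-dimensional parameter variety $T$ and a family $\{V_s\}_{s\in T}$ of proper $f$-invariant irreducible subvarieties dominating $X$. Since $f$ is generically finite and $f(V_s)=V_s$, the induced map on $T$ is the identity; cutting $T$ down by a general linear section (which keeps equivariance precisely because $f$ fixes each member), the universal family $\mc V\to T$ may be assumed generically finite onto $X$, so $\la_i(f_{\mc V})=\la_i(f)$ for all $i$ and $\mc V\to T$ is an $f_{\mc V}$-equivariant fibration with identity action on the positive-dimensional base. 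If its fibres were points, the fixed locus of $f$ would be Zariski-dense, forcing $f=\mathrm{id}$, which is impossible since $\la_1(f)>1$; hence the fibres are positive-dimensional, and the product formula for relative dynamical degrees gives
\[\la_2(f)=\la_2(f_{\mc V})\ \ge\ \la_1(\mathrm{id}_T)\cdot\la_1(f_{\mc V/T})=\la_1(f_{\mc V/T})=\la_1(f_{\mc V})=\la_1(f),\]
contradicting $\la_1(f)>\la_2(f)$.

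\emph{The degenerate cases.} It remains to exclude the possibility that the locus of $x$ with finite orbit — equivalently, the preperiodic points — is Zariski-dense. Preperiodic points have $\hat h_\delta=0$, so $\{\hat h_\delta=0\}$ is $f$-invariant and Zariski-dense; if some $\mathrm{Fix}(f^n)_{\mathrm{red}}$ has a positive-dimensional component then (infinitely many are forced) one again obtains a dominating family of proper $f$-invariant subvarieties and concludes as above, and otherwise one combines the comparison of $\hat h_\delta$ with $h_H$, the bound $\overline{\alpha}_f(x)\le\la_1(f)$, Theorem \ref{thmendononpre}, and a geometric Northcott/Bogomolov-type input (available here) to reduce to the case that $X$ is defined over $\overline{\F_p}$; in that isotrivial case one takes $x$ to be the generic point of a curve $D\subseteq X$ not contained in any proper $f$-invariant subvariety (such $D$ exists unless those subvarieties dominate $X$, already handled), so that $O_f(x)$, the sequence of generic points of the curves $f^n(D)$, is Zariski-dense — again contradicting the failure of SZDO. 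This completes the argument.

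\emph{Main obstacle.} The crux is the structural step: fabricating, from the bare failure of SZDO, a genuine $f$-equivariant fibration over a positive-dimensional base on a model where the relative dynamical-degree formula applies, over a possibly countable field of positive characteristic — so that equivariant resolution of singularities (unavailable in dimension $\ge 4$) cannot be used and one must argue purely with dynamical degrees (birational and generically-finite-semiconjugacy invariants, valid over any field) and with Chow-scheme families over a non-closed field, and must handle with care the transient portions of orbits. Interlocked with this is the absence of a Northcott property for $\bk$-points: the canonical height $\hat h_\delta$, the arithmetic-degree bound, Theorem \ref{thmendononpre}, and the separate treatment of the $\overline{\F_p}$-isotrivial case must jointly do the work that, in characteristic zero, a single finiteness statement would accomplish.
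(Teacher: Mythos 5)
Your proposal goes down a genuinely different road from the paper, and the road has a hole in it. The paper's argument is short: by Proposition \ref{proextarhitd} (applied to the nef eigenclass $D$ with $f^*D\equiv\la_1(f)D$, which exists by the spectral-gap hypothesis and smoothness) there is a point $x\in X(\bk)$ with $\alpha_f(x)=\la_1(f)>1$; an irreducible component $W$ of maximal dimension of $\overline{O_f(x)}$ is $f^m$-invariant and contains $f^l(x)$; the proof of \cite[Proposition 8.6]{Jia2021} shows that every $f$-periodic \emph{proper} subvariety $V$ of period $m$ has $\la_1(f^m|_V)<\la_1(f^m)$; so if $W\neq X$, Proposition \ref{proupboundarth} together with Lemma \ref{lem_subvar} gives $\la_1(f)^m=\overline{\alpha}_{f^m}(f^l(x))\leq\la_1(f^m|_W)<\la_1(f)^m$, a contradiction. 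That is the whole proof. You have all the ingredients on the table (the KSM bound, canonical heights, the dynamical-degree drop on invariant subvarieties), but you use them only in your ``degenerate cases,'' whereas they are the entire engine: the existence of a single point of maximal arithmetic degree immediately forecloses \emph{every} proper orbit closure, periodic families or not.

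The concrete gap in your structural step is the countability argument. You cover $X(\bk)$ by countably many families of proper $f^k$-invariant subvarieties and conclude that one family must dominate $X$ because ``an irreducible Noetherian scheme is not a countable union of proper closed subsets.'' But the set you need to cover is $X(\bk)$, and after your reduction $\bk$ is finitely generated over $\overline{\F_p}$, hence countable; then $X(\bk)$ \emph{is} a countable union of points, so nothing forces any family to dominate. Enlarging $\bk$ to an uncountable field does not repair this, because SZDO is not inherited downward: a dense orbit of a $K$-point gives no $\bk$-point with dense orbit, and the theorem asks for $x\in X(\bk)$ (your opening claim that the conclusion is ``unaffected by enlarging $\bk$'' is exactly the false direction). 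This is precisely the failure of Northcott-type finiteness that the paper's height argument is designed to circumvent. Your final paragraph also leans on an unspecified ``geometric Northcott/Bogomolov-type input (available here)'' and an unexplained reduction to the isotrivial case; as written these are placeholders, not proofs. I would discard the Chow-variety fibration construction entirely and run the paper's three-line height argument instead.
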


\subsection{Ergodic theory}\label{subsecergo}
Let $X$ be a variety over $\bk$. 
Denote by $|X|$ the underling set of $X$ with the constructible topology i.e. the topology on a  $X$ generated by the constructible subsets (see~\cite[Section~(1.9) and in particular (1.9.13)]{EGA-IV-I}).
In particular every constructible subset is open and closed.
This topology is finer than the Zariski topology on $X.$ Moreover $|X|$ is (Hausdorff) compact.

Denote by $\sM(|X|)$ the space of Radon measures on $X$ endowed with the weak-$\ast$ topology.
\begin{thm}\label{thmRadon}Every $\mu\in \sM(|X|)$ takes form 
$$\mu=\sum_{i\geq 0}a_i\delta_{x_i}$$
where $\delta_{x_i}$ is the Dirac measure at $x_i\in X$, $a_i\geq 0$.
\end{thm}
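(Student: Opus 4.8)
The plan is to argue by Noetherian induction on $X$, the crux being a reduction of the ``generic'' mass of $\mu$ on an irreducible $X$ to the mass of $\mu$ on proper closed subvarieties, where the dimension strictly drops. I will freely use the three properties recalled just before the statement: $|X|$ is compact Hausdorff, the constructible subsets form a basis of clopen sets for $|X|$, and every Radon measure on a compact Hausdorff space is finite and inner regular with respect to compact subsets. I also use that the restriction of a Radon measure to an open or closed subset is again Radon, that extension by zero along an open subset preserves the Radon property, and that for a closed subscheme $Z\subseteq X$ the subspace topology induced from $|X|$ is exactly the constructible topology of $Z$; in particular $|Z|$ sits inside $|X|$ as a copy of the constructible space of the variety $Z$, of dimension $\le\dim X$, with strict inequality whenever $X$ is irreducible and $Z\subsetneq X$.

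First I would carry out routine reductions. A standard disjointification over the irreducible components of $X$ writes $\mu$ as a finite sum of Radon measures, the $i$-th of which is supported on a subset of $|X_i|$ that is open in the closed subspace $|X_i|$; extending each by zero to a Radon measure on $|X_i|$, it suffices to prove the theorem for $X$ irreducible. Within the induction I may then assume the theorem is already known for every variety of strictly smaller dimension, the base case $\dim X=0$ being the trivial case of a finite discrete space.

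Now let $X$ be irreducible with generic point $\eta$. Since $\mu$ is finite, it has at most countably many atoms; let $S$ be the (countable) set of all $x\in|X|$ with $\mu(\{x\})>0$, and note that $\sum_{x\in S}\mu(\{x\})\,\delta_x\le\mu$ because the singletons $\{x\}$ are pairwise disjoint Borel sets. The heart of the proof is the estimate
\[
\mu\bigl(|X|\setminus\{\eta\}\bigr)\ \le\ \sum_{x\in S,\ x\ne\eta}\mu(\{x\}).
\]
To prove it, note first that $|X|\setminus\{\eta\}=\bigcup_{Z}|Z|$, where $Z$ ranges over the proper closed subvarieties of $X$, since every point other than $\eta$ lies in its own closure, a proper closed subvariety; moreover this family of clopen sets is directed under finite unions because $|Z|\cup|Z'|=|Z\cup Z'|$ and a finite union of proper closed subsets of an irreducible variety is again proper and closed. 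By inner regularity, $\mu(|X|\setminus\{\eta\})$ is the supremum of $\mu(K)$ over compact $K\subseteq|X|\setminus\{\eta\}$; such a $K$ is covered by finitely many $|Z_i|$, hence $K\subseteq|W|$ with $W:=Z_1\cup\cdots\cup Z_m\subsetneq X$ closed of dimension $<\dim X$. By the induction hypothesis, $\mu$ restricted to $|W|$ is a countable sum of point masses, and its atoms are atoms of $\mu$ lying in $|W|$, hence distinct from $\eta$; therefore $\mu(K)\le\mu(|W|)\le\sum_{x\in S,\ x\ne\eta}\mu(\{x\})$. Taking the supremum over $K$ yields the estimate.

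Combining this with the obvious reverse inequality (the points of $S\setminus\{\eta\}$ are disjoint singletons contained in $|X|\setminus\{\eta\}$) gives $\mu(|X|\setminus\{\eta\})=\sum_{x\in S,\ x\ne\eta}\mu(\{x\})$, and adding $\mu(\{\eta\})$, which coincides with the $\eta$-term of $\sum_{x\in S}\mu(\{x\})$ whether or not $\eta$ is an atom, gives $\mu(|X|)=\sum_{x\in S}\mu(\{x\})$. Since also $\sum_{x\in S}\mu(\{x\})\,\delta_x\le\mu$ and the two measures have the same finite total mass, they are equal, which is the claim. The step I expect to be most delicate is the measure-theoretic bookkeeping: on a non-metrizable compact Hausdorff space one may not simply pass to the ``diffuse part'' of $\mu$, as it need not remain Radon, so the whole argument must be run through inner regularity and the compactness reduction above; checking that restriction and extension by zero along suitable subsets preserve the Radon property, and that the constructible topology is compatible with passage to closed subschemes, are the remaining small points requiring care. (Alternatively, one could first prove by a Cantor--Bendixson argument and Noetherian induction that $|X|$ is a scattered compact space, and then invoke the classical fact that every Radon measure on a scattered compact Hausdorff space is purely atomic.)
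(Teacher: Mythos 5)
Your proof is correct, but it follows a genuinely different route from the paper's. The paper does not induct on dimension: it first establishes the claim that every Radon measure of positive total mass on $|X|$ has an atom, by covering $|X|$ with the clopen sets $U_x=\overline{\{x\}}$, extracting a finite subcover, and using inner regularity (Lemma \ref{lemfindnexp}) to produce, from any non-atom $x$ with $\mu(U_x)>0$, a strictly smaller point $y<x$ with $\mu(U_y)>0$; the descending chain condition coming from Noetherianity then yields a contradiction. The theorem follows there by subtracting the atomic part $\sum_{x\in S(\mu)}\mu(x)\delta_x$ from $\mu$ and applying the claim to the difference. You replace the descending-chain argument by induction on $\dim X$ together with the mass identity $\mu(|X|)=\sum_{x\in S}\mu(\{x\})$, concluding by comparison of total masses rather than by analyzing a diffuse part. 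Both proofs rest on the same inputs (compactness of $|X|$, clopen-ness of the sets $U_x$ and of closed subvarieties, inner regularity, Noetherianity), but yours sidesteps the one delicate point you correctly identify in the subtraction step --- that $\mu-\sum_{x\in S(\mu)}\mu(x)\delta_x$ is again Radon, which on a non-metrizable compactum needs a justification (e.g.\ via $\tau$-additivity of dominated measures) that the paper leaves implicit --- at the cost of the bookkeeping required to run the induction through reducible closed subschemes. Your parenthetical alternative is also sound: the same DCC argument shows $|X|$ is scattered (a minimal point of a closed subset is isolated in it, since $U_y$ is open), so the classical theorem that Radon measures on scattered compact Hausdorff spaces are purely atomic gives a third proof.
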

\begin{rem}Theorem \ref{thmRadon} is inspired by \cite[Theorem A]{Gignac2014}. In \cite[Theorem A]{Gignac2014}, Gignac worked on the Zariski topology, which is not Hausdorff. Here, we use the constructible topology systematically. We think that the constructible topology is the right topology for studying ergodic theory in algebraic dynamics.  For example, using constructible topology, we may avoid the conception of finite signed Borel measure used in  \cite[Theorem A]{Gignac2014}. Instead of it, we use the more standard notion of Radon measure.
\end{rem}

A sequence $x_n\in X, n\geq 0$ is said to be \emph{generic}, if every subsequence $x_{n_i}, i\geq 0$ is Zariski dense in $X.$
\begin{cor}\label{corgenericseqence}A sequence $x_n\in X, n\geq 0$ is generic if and only if 
$$\lim_{n\to \infty}\delta_{x_n}=\delta_{\eta},$$ where $\eta$ is the generic point of $X.$
\end{cor}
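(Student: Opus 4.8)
The plan is to reduce the weak-$\ast$ convergence $\delta_{x_n}\to\delta_\eta$ to a statement purely about constructible subsets of $X$, and then to recognise that statement as a reformulation of genericity. Throughout, $X$ is irreducible (it has a unique generic point $\eta$), and I will use only the properties of $|X|$ recorded above: it is compact Hausdorff and the constructible subsets form a basis consisting of clopen sets, so $|X|$ is a Stone space.

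First I would set up the reduction. Since $|X|$ is totally disconnected, the locally constant real functions on $|X|$ form an $\R$-subalgebra of $C(|X|)$ that contains the constants and separates points (any two distinct points are separated by the indicator $\mathbf 1_C$ of a suitable clopen $C$), hence is dense by Stone--Weierstrass; and every locally constant function is a finite $\R$-linear combination of indicators $\mathbf 1_C$ with $C$ constructible. An $\varepsilon/3$ estimate, together with the fact that $\delta_{x_n},\delta_\eta$ are probability measures, then shows that $\delta_{x_n}\to\delta_\eta$ in $\sM(|X|)$ if and only if $\mathbf 1_C(x_n)\to\mathbf 1_C(\eta)$ for every constructible $C\subseteq X$.

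Next I would pin down when $\eta$ lies in a constructible set. Writing $C$ as a finite union of locally closed subsets and using irreducibility, one gets $\eta\in C$ iff $C$ contains a nonempty Zariski-open subset, i.e. $\overline C=X$, and $\eta\notin C$ iff $C$ is contained in a proper Zariski-closed subset. Passing to complements, the condition ``$\mathbf 1_C(x_n)\to\mathbf 1_C(\eta)$ for every constructible $C$'' becomes simply: \emph{for every proper Zariski-closed $Z\subsetneq X$ one has $x_n\notin Z$ for all $n$ large enough}, i.e. every proper closed subset contains only finitely many of the $x_n$.

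Finally I would match this with the definition of genericity. If $\delta_{x_n}\to\delta_\eta$ and $x_{n_i}$ is any subsequence, then its Zariski closure $Y$ contains all the $x_{n_i}$, so by the previous step $Y$ cannot be a proper closed subset; hence $Y=X$ and the sequence is generic. Conversely, if $x_n$ is generic but some proper closed $Z\subsetneq X$ contained $x_{n_i}$ for infinitely many $i$, that subsequence would have Zariski closure inside $Z\neq X$, contradicting genericity; so every proper closed subset catches only finitely many $x_n$, and by the previous step $\delta_{x_n}\to\delta_\eta$. The only mildly delicate point is the first reduction --- knowing $C(|X|)$ well enough to test weak-$\ast$ convergence on constructible sets --- and this is exactly what the Stone-space structure of $|X|$ supplies; the rest is bookkeeping with irreducibility.
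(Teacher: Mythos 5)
Your proof is correct, but it takes a genuinely different route from the paper's. The paper deduces the corollary from Theorem \ref{thmRadon}: it uses compactness of $\sM^1(|X|)$ to extract a convergent subsequence $\delta_{x_{n_i}}\to\mu$, invokes the structure theorem to write $\mu=\sum a_i\delta_{x_i}$ as an atomic measure, and derives a contradiction by testing against $1_V$ for $V=\overline{\{x_0\}}$ when some atom $x_0\neq\eta$ appears; the converse direction is the same easy test against $1_V$ that you also use. You instead bypass both Theorem \ref{thmRadon} and the compactness of $\sM^1(|X|)$ entirely: via Stone--Weierstrass on the Stone space $|X|$ you reduce weak-$\ast$ convergence of $\delta_{x_n}$ to convergence of $1_C(x_n)$ for constructible $C$, and then translate ``$\eta\in C$'' into ``$\overline{C}=X$'' using irreducibility. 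What your approach buys is self-containedness and an explicit identification of a convergence-determining class of test functions (indicators of constructible sets), which is a reusable observation; what the paper's approach buys is brevity given that Theorem \ref{thmRadon} has already been established and is the organizing result of the section. Two small points worth making explicit if you write this up: the indicators of constructible sets separate points of $|X|$ because schemes are $T_0$ and Zariski-open subsets of a noetherian scheme are constructible, and the $\varepsilon/3$ step uses that all the measures involved are probability measures (uniformly bounded mass). Neither is a gap.
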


\medskip

Let $f: X\dashrightarrow X$ be a dominant  rational self-map. Set $|X|_f:=|X|\setminus (\cup_{i\geq 1}I(f^i)).$
Because every Zariski closed subset of $X$ is open and closed in the constructible topology,  $|X|_f$ is a closed subset of $|X|.$
The restriction of $f$ to $|X|_f$ is continuous. We still denote by $f$ this restriction.

\medskip
\subsubsection{DML problems}
Applying Corolary \ref{corgenericseqence}, the dynamical Moredell-Lang conjecture can be interpreted as the following equidistribution statement: 
\begin{dmlcon}[DML in form of equidistribution]
For $x\in X_f(\bk)$, if $O_f(x)$ is Zariski dense in $X$, then $$\lim_{n\to \infty}\delta_{f^n(x)}=\delta_{\eta}.$$
\end{dmlcon}
\begin{rem}
Here the assumption that $O_f(x)$ is Zariski dense in $X$ does not cause any problem. Because after replacing $x$ by some $f^m(x)$ and $f$ by a suitable iterate, we may assume that $\overline{O_f(x)}$ is irreducible. Then after replacing $X$ by $\overline{O_f(x)}$, we may assume that $O_f(x)$ is Zariski dense in $X$.
\end{rem}

\medskip

Using Theorem \ref{thmRadon}, we give a fast proof of the weak dynamical Mordell-Lang.
Same result was proved in \cite[Corollary 1.5]{Bell2015} (see also \cite[Theorem 2.5.8]{Favre2000a}, \cite[Theorem D, Theorem E]{Gignac2014}, \cite[Theorem 2]{Petsche2015}, \cite[Theorem 1.10]{Bell2020}). 
\begin{thm}[Weak DML]\label{thmdml}
Let  $x$ be a points $\in X_f(\bk)$ with $\overline{O_f(x)}=X.$ Let $V$ be a proper subvariety of $X$. Then $\{n\geq 0|\,\, f^n(x)\in V\}$ is of Banach density zero in $\Z_{\geq 0}$ i.e. for every sequence of intervals $I_n, n\geq 0$ in $\Z_{\geq 0}$ with $\lim_{n\to \infty}\# I_n=+\infty$, we have 
$$\lim_{n\to \infty}\frac{\#(\{n\geq 0|\,\, f^n(x)\in V\}\cap I_n)}{\#I_n}=0.$$
\end{thm}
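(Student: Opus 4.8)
The plan is to derive the Banach density zero statement directly from the structure theorem for Radon measures on $|X|$ (Theorem \ref{thmRadon}) together with the characterization of generic sequences (Corollary \ref{corgenericseqence}). The key observation is that Banach density zero of $S := \{n \geq 0 \mid f^n(x) \in V\}$ is equivalent to the following: for every sequence of intervals $I_n \subseteq \Z_{\geq 0}$ with $\# I_n \to \infty$, the averaged Dirac masses $\mu_n := \frac{1}{\# I_n}\sum_{m \in I_n} \delta_{f^m(x)}$, viewed as Radon measures on the compact space $|X|$, satisfy $\mu_n(V) \to 0$. Since $V$ is constructible, $V$ is both open and closed in $|X|$, so $\mathbf{1}_V$ is a continuous function on $|X|$; hence it suffices to show that every weak-$\ast$ limit point $\mu$ of $(\mu_n)$ gives zero mass to $V$, i.e. $\mu(|X| \setminus X_{\mathrm{gen}}) = 0$ would suffice if we knew $\mu = \delta_\eta$. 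So the real goal is: \emph{every weak-$\ast$ limit point of $(\mu_n)$ equals $\delta_\eta$}, where $\eta$ is the generic point.

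The key steps, in order, are as follows. First, pass to a weak-$\ast$ convergent subsequence $\mu_{n_k} \to \mu$ using compactness of $\sM(|X|)$ (which holds because $|X|$ is Hausdorff compact). Second, apply Theorem \ref{thmRadon} to write $\mu = \sum_{i} a_i \delta_{x_i}$ with $a_i \geq 0$ and $\sum a_i \le 1$ (the total mass is $\le 1$ since each $\mu_n$ is a probability measure and mass can only be lost to... in fact on a compact space it is exactly $1$, but we only need $\le 1$; more carefully, since $|X|$ is compact, $\mathbf{1}$ is continuous and $\mu(|X|) = \lim \mu_{n_k}(|X|) = 1$). Third — the crucial step — show that if $\mu$ had an atom at a point $z \ne \eta$, say with weight $a > 0$, then $\overline{\{z\}}^{\mathrm{zar}}$ is a proper closed subvariety $W \subsetneq X$ which is open and closed in $|X|$, so $\mathbf{1}_{W}$ is continuous and $\liminf_k \mu_{n_k}(W) \geq a > 0$; this means a positive proportion of the exponents $m \in I_{n_k}$ have $f^m(x) \in W$. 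Fourth, derive a contradiction with the hypothesis $\overline{O_f(x)} = X$ by a counting/combinatorial argument: the set of $n$ with $f^n(x)$ in any fixed proper subvariety cannot have positive upper Banach density — but this is precisely the statement we are trying to prove, so instead one argues as in Gignac's approach, using that the limit measure $\mu$ is \emph{$f$-invariant} (since $\mu_n - f_*\mu_n = \frac{1}{\#I_n}(\delta_{f^{\min I_n}(x)} - \delta_{f^{\max I_n + 1}(x)}) \to 0$ as $\#I_n \to \infty$), and an $f$-invariant Radon measure whose support is not all of $X$ would force $O_f(x)$ — which accumulates on $\mathrm{Supp}(\mu)$ in a suitable sense — to not be Zariski dense.

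Let me restate the crucial step more carefully, since that is where the main obstacle lies. The subtlety is that $f$-invariance of $\mu = \sum a_i \delta_{x_i}$ means $f$ permutes the atoms weighted appropriately; grouping atoms into finite $f$-orbits (infinite-orbit atoms would violate summability of the $a_i$ unless their weights decay, but invariance forces equal weights along an orbit, hence finite orbits), we get that $\mathrm{Supp}(\mu)$ is contained in a finite union of periodic subvarieties. Now one uses that $O_f(x)$ is Zariski dense together with the following: for \emph{every} choice of intervals $I_n$ with $\#I_n \to \infty$, such a limit $\mu$ arises, and if for some choice a proper-subvariety atom appeared, one can refine the interval choice to concentrate entirely on that subvariety, contradicting Zariski density of the full orbit via a pigeonhole on which periodic component is hit. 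The \textbf{main obstacle} is making this last contradiction rigorous without circularity: one must show that positive upper Banach density of $\{n : f^n(x) \in W\}$ for a proper subvariety $W$ contradicts $\overline{O_f(x)} = X$. This is handled by noting that $X$ is Noetherian, so $\overline{O_f(x)} = X$ means the orbit is not contained in any finite union of proper subvarieties, and one plays off finitely many candidate periodic subvarieties $W_1, \dots, W_r$ (from the invariance structure of the various limit measures) against each other: if each contributed only Banach-density-zero sets of return times we would be done, and if one contributed positive density we extract a sub-orbit landing in a single irreducible periodic $W_j$, then induct on $\dim X$ since $f|_{W_j}$ still has a Zariski-dense-in-$W_j$ sub-orbit after passing to an iterate. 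The base case $\dim X = 0$ is trivial.
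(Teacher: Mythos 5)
Your overall strategy coincides with the paper's: form the averaged measures $\mu_n=\frac{1}{\#I_n}\sum_{m\in I_n}\delta_{f^m(x)}$, use compactness of $\sM^1(|X|)$, note that any weak-$\ast$ limit $\mu$ is $f$-invariant, and apply Theorem \ref{thmRadon} to see that $\mu$ is purely atomic with atoms that are $f$-periodic points of the scheme $|X|$. Up to that point your argument is correct and is exactly what the paper does (the paper packages the atomic-invariant-measure step as Lemma \ref{leminvm}).

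The gap is in your final step, which you yourself flag as the ``main obstacle'' and then resolve by an interval-refinement, pigeonhole, and induction on $\dim X$ that is neither carried out nor obviously workable: the induction hypothesis on a periodic component $W_j$ concerns orbits \emph{inside} $W_j$, whereas the orbit of $x$ only returns to $W_j$ intermittently, so it is not clear what the inductive statement would even be applied to. The correct closing observation --- the one the paper uses --- is much simpler and removes all circularity: if $\mu(\{z\})=a>0$ for a periodic point $z\neq\eta$ of period $p$, then $W:=\bigcup_{j=0}^{p-1}\overline{\{f^j(z)\}}$ is a \emph{proper closed subvariety with $f(W)\subseteq W$}. Since $U_z=\overline{\{z\}}$ is clopen in $|X|$ and $\mu(U_z)\geq a>0$, some $\mu_{n_k}$ gives $U_z$ positive mass, hence there exists a \emph{single} exponent $m$ with $f^m(x)\in W$; forward invariance then traps $f^n(x)\in W$ for all $n\geq m$, so $\overline{O_f(x)}$ is contained in a proper closed subset, contradicting the hypothesis. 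No density statement about return times to $W$ is needed --- one visit suffices --- which is precisely why the argument is not circular. (Equivalently, as the paper phrases it: $x$ lies in the closed $f$-invariant set $Y=|X|_f\setminus\bigcup_{y\in\sP(X,f)\setminus\{\eta\}}U_y$, so all $\mu_n$, hence $\mu$, are supported on $Y$, whose only periodic point is $\eta$.) A minor additional slip: an invariant atomic measure need not be supported on a \emph{finite} union of periodic orbits, only on a countable union; this does not matter once the argument above is in place, since one works with a single atom at a time.
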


We also prove the weak dynamical Mordell-Lang for coherent backward orbits.  A slightly weaker version was proved in \cite[Theorem F]{Gignac2014}.
This can be viewed as a weak version of \cite[Conjecture 1.5]{Xie2018}.
\begin{thm}[Weak DML for coherent backward orbits]\label{thmdmlback}
Let $x_n \in X_f(\bk), n\leq 0$ be a sequence of points such that $\overline{\{x_n, n\leq 0\}}=X$ and $f(x_n)=x_{n+1}$ for all $n\leq -1.$
Let $V$ be a proper subvariety of $X$. Then $\{n\leq 0|\,\, x_n\in V\}$ is of Banach density zero in $\Z_{\leq 0}$ 
\end{thm}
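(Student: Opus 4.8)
The plan is to argue by contradiction along lines parallel to those for Theorem~\ref{thmdml}, with the forward orbit replaced by the coherent backward sequence, the point being to convert "positive Banach density of returns into $V$" into an honest $f$-periodic proper subvariety. Suppose $S := \{n \le 0 \mid x_n \in V\}$ is not of Banach density zero. First I would fix a sequence of intervals $I_n \subseteq \Z_{\le 0}$ with $\# I_n \to \infty$ and $\#(S \cap I_n)/\# I_n \ge \varepsilon > 0$ for all $n$, and form the empirical probability measures $\mu_n := \frac{1}{\# I_n} \sum_{k \in I_n} \delta_{x_k} \in \sM(|X|)$. Since $|X|$ is compact, $(\mu_n)$ has a weak-$\ast$ cluster point $\mu \in \sM(|X|)$ (e.g. an ultralimit); $\mu$ is a Radon probability measure supported on the closed set $|X|_f$, and because $V$ is Zariski closed, hence clopen in $|X|$, the evaluation $\nu \mapsto \nu(V)$ is weak-$\ast$ continuous, so $\mu(V) \ge \varepsilon > 0$. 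Note $\eta \notin V$, since $X$ is irreducible and $V \subsetneq X$ is closed.

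The next step is to see that $\mu$ is $f$-invariant: as $f(x_k) = x_{k+1}$ for every $k \le -1$, the measure $f_*\mu_n$ differs from $\mu_n$ by a signed measure of total variation $O(1/\#I_n) \to 0$, and $f$ being continuous on the compact space $|X|_f$ its push-forward is weak-$\ast$ continuous there, so $f_*\mu = \mu$. Now I would invoke Theorem~\ref{thmRadon} to write $\mu = \sum_i a_i \delta_{y_i}$ with all $a_i > 0$; from $\mu(V) \ge \varepsilon$, some atom $y := y_{i_0}$ lies in $V$, hence $y \ne \eta$. As $\{y\}$ has positive $\mu$-mass, Poincaré recurrence for the (possibly non-invertible) measure-preserving system $(|X|_f, \mu, f)$ yields $m \ge 1$ with $f^m(y) = y$. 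Then $Z := \overline{\{y\}}$ is a proper irreducible closed subvariety with $\overline{f^m(Z)} = Z$, and $Z^{*} := \bigcup_{l=0}^{m-1} \overline{f^l(Z)}$ is a proper closed subset of $X$ with $f(Z^{*}) \subseteq Z^{*}$.

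Finally I would derive the contradiction from the forward-invariance of $Z^{*}$. If $x_{n_0} \in Z^{*}$ for some $n_0 \le 0$ then $x_n \in Z^{*}$ for all $n_0 \le n \le 0$ (apply $f$ repeatedly); hence $\{n \le 0 \mid x_n \in Z^{*}\}$ is either all of $\Z_{\le 0}$ — impossible, since that would give $X = \overline{\{x_n \mid n \le 0\}} \subseteq Z^{*} \subsetneq X$ — or bounded below, hence finite. Either way $\#(\{n \le 0 \mid x_n \in Z^{*}\} \cap I_n)/\#I_n \to 0$, so, $Z^{*}$ being clopen in $|X|$, $\mu(Z^{*}) = 0$, contradicting $\mu(Z^{*}) \ge \mu(\{y\}) = a_{i_0} > 0$.

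I expect the conceptual crux to be exactly the middle step: recognizing that positive Banach density of returns to $V$ forces a nonzero $f$-invariant Radon measure on the compact space $|X|$, and then extracting from it — via the structure theorem (Theorem~\ref{thmRadon}) and Poincaré recurrence — a genuine periodic subvariety; once that is in hand the contradiction is immediate. All the technical care lives in the constructible topology: weak-$\ast$ compactness of $\sM(|X|)$, continuity of $f$ on $|X|_f$, and above all the fact that every Zariski-closed subset of $X$ is clopen in $|X|$, which makes indicators of subvarieties continuous and lets the limit measure faithfully remember densities. The same argument proves Theorem~\ref{thmdml} verbatim, with $\Z_{\le 0}$ replaced by $\Z_{\ge 0}$ and "bounded below, hence finite" replaced by "the forward orbit is eventually contained in $Z^{*}$", again contradicting Zariski density.
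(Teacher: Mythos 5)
Your proof is correct and follows essentially the same route as the paper's: empirical measures along the intervals, weak-$\ast$ compactness of $\sM^1(|X|)$, $f$-invariance of the limit, atomicity via Theorem \ref{thmRadon}, and the observation that a coherent backward sequence can meet the clopen set attached to a periodic point $\neq\eta$ only finitely often, by Zariski density. The only cosmetic difference is that you extract periodicity of the atom via Poincar\'e recurrence and argue by contradiction, whereas the paper packages the same fact as Lemma \ref{leminvm} and concludes directly that $\mu=\delta_\eta$.
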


\subsubsection{Backward orbits}
Now assume that $f: X\to X$ is a flat and finite endomorphism. 
Let $d_f:=[\bk(X)/f^*\bk(X)]$ be topological degree of $f$. It is just the $(\dim X)$-th dynamical degree of $f$.

\medskip

Recall that for every $x\in X$, the multiplicity of $f$ at $x$ is $$m_f(x):=\dim_{\kappa(f(x))}(O_{X,x}/m_{f(x)}O_{X,x})\in \Z_{\geq 1}$$
where $O_{X,x}$ is viewed as an $O_{X,f(x)}$-module via $f$.
For every $x\in X$, we have $\sum_{y\in f^{-1}(x)}m_f(y)=d_f$ (see \cite[Theorem 2.4]{Gignac2014a}).

\medskip

In Section \ref{subsecfunct}, we define a natural pullback $f^*: \sM(X)\to \sM(X)$ which is continuous and for every $x\in X$,
$$f^*\delta_x=\sum_{y\in f^{-1}(x)}m_f(y)\delta_y.$$
We get the following equidistribution result.
\begin{thm}\label{thmequpullback}Let $f: X\to X$ be a flat and finite endomorphism. Let $x\in X(\bk)$ with $\overline{\cup_{i\geq 0}f^{-i}(x)}=X.$
Then for every sequence of intervals $I_n, n\geq 0$ in $\Z_{\geq 0}$ with $\lim_{n\to \infty}\# I_n=+\infty$, we have 
$$\lim_{n\to \infty}\frac{1}{\#I_n}(\sum_{i\in I_n}d_f^{-i}(f^i)^*\delta_{x})=\delta_{\eta}.$$
\end{thm}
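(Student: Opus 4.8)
The plan is to apply a Krylov--Bogolyubov argument to the continuous operator $P:=d_f^{-1}f^*$ on the space of probability measures of the compact space $|X|$, and then to use the hypothesis $\overline{\bigcup_{i\ge 0}f^{-i}(x)}=X$ to force the only possible limit measure to be $\delta_\eta$.

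First I would record two elementary facts. Since $(f^i)^*=(f^*)^i$ and $\sum_{y\in f^{-1}(z)}m_f(y)=d_f$ for every $z$, the operator $P$ is continuous for the weak-$\ast$ topology, sends probability measures to probability measures, and $P^i\delta_x=d_f^{-i}(f^i)^*\delta_x$; in particular each $\mu_n:=\frac1{\#I_n}\sum_{i\in I_n}P^i\delta_x$ is a probability measure on $|X|$. Secondly, for any Zariski-closed $Z\subseteq X$ (which is clopen in $|X|$) and any positive Radon measure $\nu$ one has $(P\nu)(Z)\le\nu(f(Z))$: indeed $Z\subseteq f^{-1}(f(Z))$, $f(Z)$ is Zariski-closed because $f$ is finite, and $(f^*\nu)(f^{-1}(A))=d_f\nu(A)$ for every clopen $A$ (this holds on Dirac masses, hence for all $\nu$ by density and continuity of $f^*$).

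The probability measures on the compact space $|X|$ form a weak-$\ast$ compact set, so it suffices to prove that every subsequential limit $\mu$ of $(\mu_n)$ equals $\delta_\eta$. Because each $I_n$ is an \emph{interval}, $\|P\mu_n-\mu_n\|\le 2/\#I_n\to0$, hence $P\mu=\mu$. Suppose $\mu\ne\delta_\eta$. By Theorem~\ref{thmRadon}, $\mu$ is a countable sum of atoms, and since $\mu\ne\delta_\eta$ it has an atom at some non-generic point. Among the non-generic atoms let $M>0$ be the largest mass, attained on a finite nonempty set $S$. From $P\mu=\mu$ one gets $\mu(\{w\})=d_f^{-1}m_f(w)\mu(\{f(w)\})$ for every $w$; applied to $w\in S$, together with $m_f(w)\le d_f$ and $f(w)\ne\eta$ (since $w\ne\eta$ and $f$ is finite), this forces $m_f(w)=d_f$ and $f(w)\in S$, so $f(S)\subseteq S$, and $f$ is injective on $S$ (since $m_f(w)=d_f$ gives $f^{-1}(f(w))=\{w\}$). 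Hence $f|_S$ is a permutation; pick one of its cycles $\{w_0,\dots,w_{k-1}\}$, with $f(w_j)=w_{j+1}$, $f^{-1}(w_{j+1})=\{w_j\}$, $\mu(\{w_j\})=M$. Put $Z_j:=\overline{\{w_j\}}$ and $\overline C:=\bigcup_j Z_j$; then $f(\overline C)=\overline C$, each $Z_j$ is a proper closed irreducible subset so $e:=\dim\overline C<\dim X$, and $\mu(\overline C)\ge kM>0$.

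To conclude, $\overline C$ is clopen in $|X|$, so $\mu_{n_l}(\overline C)\to\mu(\overline C)$ along the relevant subsequence; on the other hand the sequence $b_i:=(P^i\delta_x)(\overline C)$ is non-increasing by the second fact applied with $f(\overline C)=\overline C$, so its interval averages $\mu_{n_l}(\overline C)=\frac1{\#I_{n_l}}\sum_{i\in I_{n_l}}b_i$ converge to $\inf_i b_i$; comparing, $\inf_i b_i=\mu(\overline C)>0$, hence $b_0=\delta_x(\overline C)>0$, i.e. $x\in\overline C$. Finally the increasing chain of Zariski-closed sets $\overline C\subseteq f^{-1}(\overline C)\subseteq f^{-2}(\overline C)\subseteq\cdots$ stabilizes, say at $Z^*:=f^{-L}(\overline C)$, so $f^{-1}(Z^*)=Z^*$ and $\dim Z^*=\dim f^{-L}(\overline C)\le e<\dim X$, whence $Z^*\subsetneq X$. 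Since $x\in\overline C\subseteq Z^*$ and $f^{-1}(Z^*)=Z^*$, we get $\bigcup_{i\ge0}f^{-i}(x)\subseteq Z^*$, contradicting $\overline{\bigcup_i f^{-i}(x)}=X$. The hard part is the third paragraph: extracting from $P$-invariance a genuine totally ramified cycle of subvarieties, and then recognizing through the chain $f^{-l}(\overline C)$ that its total backward orbit is a \emph{proper} closed totally invariant set which must trap $x$; the rest (compactness, the Krylov--Bogolyubov estimate, and the elementary fact that averages over intervals of length tending to infinity of a convergent sequence converge to its limit) is routine.
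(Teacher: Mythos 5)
Your proof is correct, and its skeleton coincides with the paper's: pass to a subsequential limit $\mu$ by compactness of $\sM^1(|X|)$, use the telescoping estimate over the intervals $I_n$ to get $d_f^{-1}f^*\mu=\mu$, and then analyze the purely atomic structure of $\mu$ given by Theorem \ref{thmRadon} via a largest-atom argument. The one genuine structural difference is where the hypothesis $\overline{\cup_{i\geq 0}f^{-i}(x)}=X$ enters. The paper front-loads it: it introduces the closed, totally invariant set $Y$ of points with Zariski dense total backward orbit, observes $\Supp\,\mu_n\subseteq Y$ from the start, and then invokes Lemma \ref{lemtotinm}, whose proof shows that the atoms of a $Y$-supported invariant measure would have to lie in $TP(X,f)$ (finite total backward orbit), which meets $Y\setminus\{\eta\}$ trivially. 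You instead use the hypothesis only at the end: from invariance you extract a totally ramified $f$-cycle of non-generic atoms with closure $\overline{C}$, show via the monotonicity of $b_i=(d_f^{-i}(f^i)^*\delta_x)(\overline{C})$ and the Ces\`aro limit along $I_{n_l}$ that $x\in\overline{C}$, and then saturate $\overline{C}$ under $f^{-1}$ to obtain a proper closed totally invariant set trapping $\cup_i f^{-i}(x)$. Your route buys a self-contained proof that does not require setting up $Y$ and $TP(X,f)$ beforehand, at the cost of the extra monotonicity/averaging step; the paper's route isolates the reusable Lemma \ref{lemtotinm} (an ergodic-type uniqueness statement on $Y$) which it also exploits elsewhere. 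All the individual steps you use (the identity $\mu(\{w\})=d_f^{-1}m_f(w)\mu(\{f(w)\})$, the fact that $m_f(w)=d_f$ forces $f^{-1}(f(w))=\{w\}$, the preservation of dimension under $f^{-1}$ for finite $f$) check out.
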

\begin{rem}The assumption $\overline{\cup_{i\geq 0}f^{-i}(x)}=X$ is necessary. Otherwise, $$\frac{1}{\#I_n}(\sum_{i\in I_n}d_f^{-i}(f^i)^*\delta_{x}), n\geq 0$$ are supported on the proper closed subset $\overline{\cup_{i\geq 0}f^{-i}(x)}$ of $X.$
\end{rem}

\medskip



Applying Theorem \ref{thmequpullback}, we count the preimages of a point without multiplicities. 
\begin{thm}\label{thmcountprestr}Let $f: X\to X$ be a flat and finite endomorphism.
Assume that the field extension $\bk(X)/f^*\bk(X)$ is separable. Let $x\in X(\bk)$ be a point with $\overline{\cup_{i\geq 0}f^{-i}(x)}=X.$ 
For $c\in (0,1]$, $n\geq 0,$ define 
$$S^n_c:=\min\{\#S|\,\, S\subseteq f^{-n}(x),\,\, \sum_{y\in S}m_{f^n}(y)\geq cd_f^n\}.$$
Then for every $c\in (0,1]$, we have
$$\lim_{n\to \infty}(S^n_c)^{1/n}= d_f.$$
 \end{thm}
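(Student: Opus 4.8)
The plan is to deduce both the lower and upper bounds for $(S^n_c)^{1/n}$ from the equidistribution statement of Theorem \ref{thmequpullback}, applied to a well-chosen sequence of intervals, together with the separability hypothesis on $\bk(X)/f^*\bk(X)$. First observe that the separability assumption guarantees that for each $n$ the finite scheme $f^{-n}(x)$ is generically reduced over $\bk$ in the sense that, outside the (strict) ramification locus, all multiplicities $m_{f^n}(y)$ equal $1$; more precisely, the number of distinct points in $f^{-n}(x)$ is $d_f^n$ for $x$ outside a proper closed subset, and by the assumption $\overline{\cup_{i\ge 0}f^{-i}(x)}=X$ we may arrange (after replacing $x$ by a suitable preimage, using a pigeonhole/specialization argument) that infinitely many levels behave generically. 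In any case $\#f^{-n}(x)\le d_f^n$ always, which already gives $\limsup_n (S^n_c)^{1/n}\le d_f$ for every $c$, since $S^n_c\le \#f^{-n}(x)\le d_f^n$.

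For the lower bound, the key is to rule out that a \emph{sublinear} number of preimages (in the sense of $\#S = o(\text{anything exponential})$, i.e. $\limsup (S^n_c)^{1/n}<d_f$) can carry a definite proportion $c$ of the total mass $d_f^n$. Suppose toward a contradiction that $(S^n_c)^{1/n}\to \beta < d_f$ along a subsequence. Consider the normalized pullback measures $\nu_n := d_f^{-n}(f^n)^*\delta_x = \sum_{y\in f^{-n}(x)} d_f^{-n} m_{f^n}(y)\,\delta_y$. By definition of $S^n_c$ there is a set $S_n\subseteq f^{-n}(x)$ with $\#S_n = S^n_c$ and $\nu_n(S_n)\ge c$. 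Now I would exploit that each point $y\in S_n$ is a closed point, hence $\delta_y$ is, in the constructible topology, a very ``small'' measure: any fixed proper constructible (equivalently, closed) subset $W\subsetneq X$ contains only finitely many of the $f^{-i}(x)$... — but more to the point, I want to extract a limit. Pass to a subsequence so that the averages $\frac{1}{\#I_n}\sum_{i\in I_n}\nu_i$ converge; by Theorem \ref{thmequpullback} (taking $I_n=[0,n]$ or any exhausting sequence of intervals) the limit is $\delta_\eta$, the Dirac mass at the generic point. The contradiction should come from the fact that a uniformly bounded-by-$\beta^n$-many-atoms piece of $\nu_n$ cannot, in the average, escape to the generic point: each such atom $\delta_y$ with $y$ a closed point assigns mass $0$ to $\{\eta\}$ and, crucially, the atoms sitting on any \emph{fixed} finite-type ``test set'' can be controlled. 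I would make this precise by testing against the indicator function $\mathbf 1_{\{\eta\}}$, which is \emph{not} continuous, so instead one tests against $\mathbf 1_W$ for $W$ a proper closed subset: $\nu_n(W)\to 0$ is exactly weak DML-type information, but I actually need the reverse — that the ``small-support part'' of $\nu_n$ does \emph{not} converge to $\delta_\eta$. The cleanest route is: if $S_n$ has at most $\beta'^n$ elements with $\beta<\beta'<d_f$, choose $N$ with $\beta'^N < d_f^N$ strictly, and note $f^N(S_{n})$ has at most $\beta'^n$ elements as well while its $\nu_{n-N}$-mass is $\ge c - (\text{negligible})$; iterating the relation $\sum_{y\in f^{-1}(z)} m_f(y) = d_f$ shows that to accumulate mass $\ge c$ one needs at least $\sim c\, d_f^n$ \emph{distinct} preimages, because the separability hypothesis caps each individual multiplicity $m_{f^n}(y)$ — no, the individual multiplicities can still grow. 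So the honest argument must be measure-theoretic via Theorem \ref{thmequpullback}.

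Concretely: let $T_n := f^{-n}(x)\setminus S_n$, so $\nu_n(T_n) \le 1-c$. The measures $\mu_n := \frac{1}{1}\nu_n|_{S_n}$ have total mass $\ge c$ and are supported on $\le S^n_c$ points. Form the Cesàro averages $A_N := \frac{1}{N}\sum_{n=1}^N \mu_n$. Since $\sum_{n} S^n_c$-supported measures with $S^n_c \le \beta'^n$ and $\beta' < d_f$: I claim $A_N$ cannot converge to $c\,\delta_\eta$. Indeed each $\mu_n$ is a finite sum of atoms at closed points; the obstruction is that $\delta_\eta$ is not a limit of measures whose ``essential support'' stays inside bounded families of closed points — but any sequence of measures on $|X|$ DOES get to put arbitrarily many closed points in, so small cardinality alone is the only lever. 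The right statement: a closed point $y$ and the generic point $\eta$ are \emph{topologically indistinguishable at the level of cardinality-bounded sums} only if... Here I expect the main obstacle: turning ``few atoms'' into ``cannot equidistribute to $\delta_\eta$''. I believe the resolution uses that $\delta_\eta(U)=1$ for every nonempty open $U$, combined with a clever choice of open cover by $d_f^n$-ish many pieces at level $n$ (pulling back a fixed affine open cover of $X$ under $f^n$), forcing the mass of $\nu_n$ to spread over $\asymp d_f^n$ atoms; then $S_n$ with $\le \beta'^n$ atoms meets only $o(d_f^n/d_f^n)$ fraction of the relevant open sets and so $\nu_n(S_n)\to 0$, contradicting $\nu_n(S_n)\ge c$. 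This would give $\liminf_n (S^n_c)^{1/n}\ge d_f$, and combined with the trivial upper bound finishes the proof. The step I expect to be genuinely delicate — and where the separability of $\bk(X)/f^*\bk(X)$ is essential — is precisely this spreading estimate: showing that the pullback of a fixed finite open cover of $X$ under $f^n$ separates a positive proportion of the $d_f^n$ mass onto distinct atoms, which is where unramifiedness in codimension zero (a consequence of separability) is used to say that a generic fiber is étale of degree $d_f^n$, hence its points are separated by such pullback covers.
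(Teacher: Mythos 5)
Your upper bound is fine, and you have correctly identified the two inputs that must drive the lower bound: the Ces\`aro equidistribution of Theorem \ref{thmequpullback}, and the consequence of separability (via \cite[Theorem 2.1, Proposition 2.3]{Gignac2014a}) that there is a proper Zariski closed subset $R\subsetneq X$ with $m_f(y)=1$ for all $y\notin R$. But the lower bound in your proposal has a genuine gap, which you yourself flag: you never produce a mechanism converting ``few atoms'' into ``cannot carry mass $c$''. Your final suggestion --- pulling back a fixed finite open cover under $f^n$ to force the mass of $\nu_n$ to spread over $\asymp d_f^n$ atoms --- cannot work as stated: the pullback of a cover by $k$ open sets is again a cover by $k$ open sets, so it cannot separate exponentially many atoms; and the ``spreading'' statement you are trying to establish by this route is essentially the conclusion of the theorem (for $c=1$ it is Corollary \ref{corcountpre}), so the sketch is circular. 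Testing against indicators of proper closed sets only gives $\nu_n(W)\to 0$ in Ces\`aro average, which, as you note, points the wrong way. Note also that separability must enter \emph{quantitatively}: for Frobenius, $f^{-n}(x)$ is a single point of multiplicity $d_f^n$ and $S^n_c=1$, so no purely topological or cardinality argument can succeed.

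The missing idea in the paper's proof is a multiplicative bookkeeping of multiplicities along the tree of preimages. One codes $f^{-n}(x)$ by words $\omega\in D^n$, $D=\{1,\dots,d_f\}$, so that each $y\in f^{-n}(x)$ receives exactly $m_{f^n}(y)$ words, and sets $A(\omega)=m_{f^n}(y_\omega)^{-1}$ for the coded point $y_\omega$; then $\sum_{\omega\in D^n}A(\omega)=\#f^{-n}(x)$, and for a set $E_n$ realizing $S^n_c$ one has $S^n_c=\sum_{\omega}A(\omega)$ over the $\geq cd_f^n$ words coding $E_n$. The chain rule $m_{f^{n+1}}(y)=m_{f^n}(f(y))\,m_f(y)$, combined with $m_f\equiv 1$ off $R$, yields the recursion $B_{n+1}/d_f^{n+1}\geq B_n/d_f^n-d_f^{-n-1}\int 1_R\,(f^{n+1})^*\delta_x$ for $B_n:=\log_{d_f}\prod_{\omega\in D^n}A(\omega)$, hence $B_n/d_f^n\geq -n\int 1_R\,\mu_n$ where $\mu_n$ is the Ces\`aro average of the normalized pullbacks; Theorem \ref{thmequpullback} gives $\int 1_R\,\mu_n\to 0$ because $1_R$ is continuous on $|X|$ and $\delta_\eta(R)=0$. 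Finally the AM--GM inequality applied to the $A(\omega)$ over the words coding $E_n$ gives $S^n_c\geq c\,d_f^{\,n(1-c^{-1}\int 1_R\mu_n)}$, whence $\liminf_n (S^n_c)^{1/n}\geq d_f$. This recursion-plus-AM--GM step is precisely where separability and the equidistribution theorem do their work, and it is the step your proposal does not supply.
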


Taking $c=1$ in Theorem \ref{thmcountprestr}, we get the following corollary.
\begin{cor}\label{corcountpre}Let $f: X\to X$ be a flat and finite endomorphism.
If the field extension $\bk(X)/f^*\bk(X)$ is separable, then for every $x\in X(\bk)$ with $\overline{\cup_{i\geq 0}f^{-i}(x)}=X,$
 $$\lim_{n\to \infty}(\#f^{-n}(x))^{1/n}= d_f.$$
 \end{cor}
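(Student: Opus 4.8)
The plan is to obtain Corollary~\ref{corcountpre} as the special case $c=1$ of Theorem~\ref{thmcountprestr}. The only thing that needs checking is that, when $c=1$, the combinatorial quantity $S^n_c$ appearing in that theorem is nothing but $\#f^{-n}(x)$; once this is granted, the assertion $\lim_{n\to\infty}(\#f^{-n}(x))^{1/n}=d_f$ is literally Theorem~\ref{thmcountprestr} read with $c=1$ (and its separability hypothesis is exactly the one assumed in the corollary).

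To identify $S^n_1$ with $\#f^{-n}(x)$, apply the fiber identity $\sum_{y\in g^{-1}(x)}m_g(y)=d_g$ (quoted from \cite{Gignac2014a} in the text above) to the endomorphism $g=f^n$, which is again flat and finite, with $d_{f^n}=d_f^n$ since the topological degree is the top dynamical degree and the latter is multiplicative under iteration. This gives $\sum_{y\in f^{-n}(x)}m_{f^n}(y)=d_f^n$. Because every multiplicity $m_{f^n}(y)$ is a positive integer, deleting any point of $f^{-n}(x)$ from a subset $S\subseteq f^{-n}(x)$ strictly lowers $\sum_{y\in S}m_{f^n}(y)$; hence the only subset with $\sum_{y\in S}m_{f^n}(y)\geq 1\cdot d_f^n$ is $S=f^{-n}(x)$ itself, so $S^n_1=\#f^{-n}(x)$. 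Plugging $c=1$ into Theorem~\ref{thmcountprestr} then yields $\lim_{n\to\infty}(\#f^{-n}(x))^{1/n}=\lim_{n\to\infty}(S^n_1)^{1/n}=d_f$.

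There is no genuine obstacle in this deduction: all of the substance lives in Theorem~\ref{thmcountprestr} (which in turn rests on the equidistribution statement Theorem~\ref{thmequpullback} and Theorem~\ref{thmRadon}), and the corollary is a bookkeeping observation. The one point worth stating explicitly is the elementary remark that the constraint $\sum_{y\in S}m_{f^n}(y)\geq cd_f^n$ forces $S$ to exhaust the whole fiber precisely when $c=1$, which is what turns $S^n_1$ into the multiplicity-free cardinality $\#f^{-n}(x)$.
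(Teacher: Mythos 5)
Your proposal is correct and matches the paper's proof exactly: the corollary is obtained by setting $c=1$ in Theorem~\ref{thmcountprestr}, and your verification that $S^n_1=\#f^{-n}(x)$ (via $\sum_{y\in f^{-n}(x)}m_{f^n}(y)=d_f^n$ and the positivity of the multiplicities) is the only bookkeeping step involved. Nothing is missing.
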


If the topological degree is large, we have the following stronger equidistribution result.
\begin{thm}\label{thmseqd}
Let $f: X\to X$ be a flat and finite endomorphism of a quasi-projective variety. Assume that 
\begin{equation}\label{equladdom}d_f:=\la_{\dim X}(f)>\max_{1\leq i\leq \dim X-1} \la_i.
\end{equation}
If the field extension $\bk(X)/f^*\bk(X)$ is separable, then for every $x\in X(\bk)$ with $\overline{\cup_{i\geq 0}f^{-i}(x)}=X,$
 $$\lim_{n\to \infty}d_f^{-n}(f^n)^*\delta_x=\delta_{\eta}.$$
 Moreover, for every irreducible subvariety $V$ of $X$ of dimension $d_V\leq \dim X-1$, $$\limsup_{n\to \infty}\#(f^{-n}(x)\cap V)^{1/n}\leq \la_{d_V}<d_f.$$
\end{thm}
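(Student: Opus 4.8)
Put $\mu_n := d_f^{-n}(f^n)^*\delta_x \in \sM(|X|)$. Since $\sum_{y\in f^{-n}(x)}m_{f^n}(y)=d_f^n$, each $\mu_n$ is a Radon probability measure and, for a constructible (hence clopen) $V$, $\mu_n(V)=d_f^{-n}M_n(V)$ where $M_n(V):=\sum_{y\in f^{-n}(x)\cap V}m_{f^n}(y)$. By Banach--Alaoglu the positive Radon measures of mass $\leq 1$ on the compact space $|X|$ are weak-$*$ compact, so it is enough to show that every weak-$*$ limit $\nu$ of a subnet of $(\mu_n)$ equals $\delta_\eta$. By Theorem \ref{thmRadon}, $\nu=\sum_i a_i\delta_{z_i}$ with $a_i\geq 0$, and testing against the constant function $1$ gives $\sum_i a_i=1$; if some $z_i$ is not the generic point then $\overline{\{z_i\}}$ is a proper closed, hence clopen, subset with $\nu(\overline{\{z_i\}})\geq a_i$. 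Thus it suffices to prove $\nu(V)=0$, i.e. $d_f^{-n}M_n(V)\to 0$, for every proper irreducible closed $V\subsetneq X$. (The last sentence of the theorem is a separate, purely geometric statement, treated next.)

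\textbf{The uniform count (last assertion, and the key geometric input).} Fix a projective compactification $\bar X\subset\P^N$ with $A=\sO(1)|_{\bar X}$ very ample, and set $d_V:=\dim V$. For $z\in X$ let $L_z\subset\P^N$ be a general linear subspace of codimension $d_V$ through $z$; then $W_z:=\bar X\cap L_z$ has class $A^{d_V}\cap[\bar X]$, bounded $A$-degree, and $f^{-n}(z)\subseteq (f^n)^{-1}(W_z)$. As $f$ is finite and flat, $(f^n)^{-1}(W_z)$ is of pure codimension $d_V$ with class $((f^n)^*A)^{d_V}$, so working on $\bar X$ (and discarding the locus at infinity, which only lowers degrees) its $A$-degree is at most $\|(f^n)^*A^{d_V}\|:=((f^n)^*A)^{d_V}\cdot A^{d-d_V}$; a Bézout bound in $\P^N$ for the isolated points of $(f^n)^{-1}(W_z)\cap\bar V$ then gives
$$\#\bigl(f^{-n}(z)\cap V\bigr)\ \leq\ C_V\,\|(f^n)^*A^{d_V}\|,$$
with $C_V$ depending only on $V$ and $\bar X$, \emph{uniformly in $z$}. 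By the definition of the dynamical degrees $\|(f^n)^*A^{d_V}\|^{1/n}\to\la_{d_V}(f)$, whence $\limsup_n\#(f^{-n}(x)\cap V)^{1/n}\leq\la_{d_V}(f)$, and this is $<d_f$ for $1\leq d_V\leq d-1$ by \eqref{equladdom}, while $\la_0(f)=1<d_f$ for $d_V=0$. In particular $\#(f^{-n}(z)\cap V)=o(d_f^n)$ uniformly in $z$, the estimate used below.

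\textbf{The equidistribution (main difficulty).} If $f$ is étale then $m_{f^n}\equiv 1$, so $\mu_n(V)=\#(f^{-n}(x)\cap V)/d_f^n\to 0$ by the previous step, and we are done. In general $M_n(V)$ may be as large as $d_f^n$ — this is precisely what happens in the purely inseparable case, which the separability hypothesis excludes — so one must control multiplicities. Using the cocycle identity $m_{f^n}(y)=\prod_{i=0}^{n-1}m_f(f^i(y))$, and that by separability (and purity of the branch locus) the ramification locus $R_f$ is a divisor with $m_f\geq 2$ on $R_f$ and $m_f\equiv 1$ off it, I would decompose $f^{-n}(x)\cap V$ according to the times $0\leq i<n$ at which the orbit of $y$ meets $R_f$: the part with an $R_f$-free orbit contributes exactly $\#\{y\in f^{-n}(x)\cap V:m_{f^n}(y)=1\}=o(d_f^n)$ by the uniform count, and the rest is expressed, through the first time $k_0$ the orbit meets $R_f$, in terms of the lower-level quantities $M_{m}(R_f)$ times the uniform counts $\#(f^{-k_0}(z)\cap V)$ (which decay like $\la_{d_V}(f)^{k_0}$). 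Iterating over successive visits to $R_f$ — noting that $V\cap f^{-k}(R_f)$ has dimension $<d_V$ unless $f^k(V)\subseteq R_f$ — reduces, by induction on $\dim V$, to the same statement for $R_f$ and its iterated sections; to close the induction one feeds in the Cesàro equidistribution $\frac{1}{\#I_n}\sum_{i\in I_n}\mu_i\to\delta_\eta$ of Theorem \ref{thmequpullback}, which, through $\overline{\cup_{i\geq 0}f^{-i}(x)}=X$ and the identity $\mu_n(f^{-i}(R_f))=\mu_{n-i}(R_f)$, forbids a positive proportion of the weighted preimages of $x$ from lying on average on the proper closed set $R_f$, thereby precluding the intermediate concentration that would make $M_n(V)/d_f^n\not\to 0$. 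The main obstacle is exactly to make this iteration converge quantitatively: each peeling of a visit to $R_f$ costs a sub-exponential factor, and one must play the strict gaps $\la_i(f)<d_f$ for all $i\leq d-1$ (given by \eqref{equladdom}) against these losses to conclude $M_n(V)=o(d_f^n)$, hence $\mu_n\to\delta_\eta$.
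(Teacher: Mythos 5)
Your proof of the last assertion (the bound $\limsup_n\#(f^{-n}(x)\cap V)^{1/n}\leq \la_{d_V}$) is essentially the paper's argument: cut by $d_V$ general hyperplane sections through the target point and $d_X-d_V$ sections containing $V$, pass to a resolution of the graph of $f^n$, and bound the isolated intersection points by the nef intersection number $(H^{d_X-d_V}\cdot (f^n)^*H^{d_V})$, whose $n$-th root tends to $\la_{d_V}$. Your "Bézout bound" needs the justification that isolated points of an intersection of pullbacks of nef classes are counted by the intersection number (the paper cites \cite[Lemma 3.3]{Jia2021} for exactly this), but the route is the same and the uniformity in $z$ is harmless.

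The equidistribution part, however, has a genuine gap, and you locate it yourself: "the main obstacle is exactly to make this iteration converge quantitatively." Your plan — decompose $f^{-n}(x)\cap V$ by the visit times of orbits to the ramification divisor $R_f$, peel off one visit at a time, and induct on $\dim V$ while feeding in Theorem \ref{thmequpullback} — is never executed; no estimate is produced showing $M_n(V)=o(d_f^n)$, and the losses you acknowledge at each peeling are not summed or controlled. Since controlling the multiplicities is precisely where separability enters and is the entire content of the theorem beyond the étale case, the proof is incomplete. The paper closes this gap with a separate statement you did not use, Theorem \ref{thmcountprestr}: for every $c\in(0,1]$, the minimal number $S^n_c$ of points of $f^{-n}(x)$ needed to carry multiplicity $\geq cd_f^n$ satisfies $(S^n_c)^{1/n}\to d_f$. (Its proof encodes preimages as words in an alphabet of size $d_f$, uses the cocycle identity $m_{f^{n+1}}(y)=m_{f^n}(f(y))m_f(y)$, the AM--GM inequality, and the Ces\`aro equidistribution of Theorem \ref{thmequpullback} applied to the indicator of the ramification locus — this is the quantitative version of the bookkeeping you were attempting.) Granting that theorem, the conclusion is immediate by contradiction: if a limit measure charged some $x_0\neq\eta$ with mass $a_0>0$, then for $c\in(0,a_0)$ one would have $\#(f^{-n_i}(x)\cap\overline{\{x_0\}})\geq S^{n_i}_c$, forcing $\limsup_i\#(f^{-n_i}(x)\cap\overline{\{x_0\}})^{1/n_i}= d_f$, contradicting your own count $\leq\la_{d_V}<d_f$. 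You should either invoke Theorem \ref{thmcountprestr} or supply the quantitative multiplicity estimate your sketch presupposes.
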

Assumption \ref{equladdom} holds for polarized endomorphisms on projective varieties.
A similar statement for polarized endomorphisms can be fund in \cite[Theorem 5.1]{Gignac2014a}.
See \cite{Guedj2005,Dinh2015} for according result for complex topology.

\medskip

Theorem \ref{thmseqd} is not true without Assumption \ref{equladdom}. 
\begin{exe}
Under the notation of Example \ref{exenotdml}. Set $g:=f^{-1}.$ Then $\la_i(g)=1, i=0,1,2.$
Denote by $1_{V}$ the characteristic function of $V$. Since $V$ is open and closed in $|\A^2|$, 
$1_{V}$ is continuous. 
We have $$\lim_{n\to \infty}\int 1_V(g^{-p^n})^*\delta_{e}=\lim_{n\to \infty}1_V(f^{p^n}(e))=1\neq 0=\int 1_V\delta_{\eta}.$$
\end{exe}

\subsection{Relation to Berkovich spaces}
 We will see in Section \ref{subsecberko}, $|X|$ can be viewed as a closed subset of the Berkovich analytification $X^{\an}$ of $X$ w.r.t the trivial norm on $\bk$.  
 So the statements in ergodic theory on $|X|$ can be translated to statements on $X^{\an}.$
 See the translation of Corollary \ref{corgenericseqence} and Theorem \ref{thmseqd} in Section \ref{subsecberko}.
 
Using reduction map, we may also use ergodic theory w.r.t. the constructible topology to study endomorphisms on Berkovich spaces with good reduction.
In Section \ref{subsectionreduction},
we apply Theorem \ref{thmseqd} to get an equidistribution result for endomorphisms of large topological degree with good reduction.
\subsection{Notation and Terminology}
\begin{points}
\item[$\bullet$] For a set $S$, denote by $\# S$ the cardinality of $S.$
\item[$\bullet$] A \emph{variety} is an irreducible separated scheme of finite type over a field.
A \emph{subvariety} of a variety $X$ is a closed subset of $X.$
\item[$\bullet$] For a variety $X$ (resp. a rational self-map $f: X\dashrightarrow Y$) over a field $k$ and a subfield $K$ of $k$, we say that $X$ (resp. $f$) is \emph{defined over $K$} if there is a variety $X_K$ (resp. a rational map $f_K$) over $K$ such that $X$ (resp. $f$) is the base change by $k$ of $X$ (resp. $f$).
\item[$\bullet$]For a rational map $f: X\dashrightarrow Y$ between varieties. Denote by $I(f)$ the indeterminacy locus of $f$. 
\item[$\bullet$] For a dominant rational self-map $f: X\dashrightarrow X$ between varieties, a subvariety $V$ of $X$ is said to be \emph{$f$-invariant} if 
$I(f)$ does not contain any irreducible component of $V$ and 
$f(V)\subseteq V.$ 
\item[$\bullet$] For a projective variety $X$, $N^i(X)$ is the the group of numerical $i$-cycles of $X$ and $N^i(X)_{\R}:=N^i(X)\otimes \R.$ 
\item[$\bullet$] For two Cartier $\R$-divisors $D_1,D_2$, write $D_1\equiv D_2$ if $D_1,D_2$ are numerically equivalent.
\item[$\bullet$] For a field extension $k/K$, $\trd_Kk$ is the transcendence degree of $k/K.$
\end{points}

\subsection*{Acknowledgement}
I would like to thank Xinyi Yuan.  Section \ref{sectionergodictheory} of this paper  is motivated by some interesting discussion with him.

\section{Dynamical degree and arithmetic degree}

\subsection{The dynamical degrees}\label{subsectiondydeg}
In this section we recall the definition and some basic facts on the dynamical degree.

Let $X$ be a variety over $\bk$ and $f: X\dashrightarrow X$ a dominant rational self-map.
Let $X'$ be a normal projective variety which is birational to $X$.
Let $L$ be an ample (or just nef and big) divisor on $X'$.
Denote by $f'$ the rational self-map of $X'$ induced by $f$.

For $i=0,1,\dots,\dim X$, and $n\geq 0$,  $(f'^n)^*(L^i)$ is the $(\dim X-i)$-cycle on $X'$ as follows: let $\Gamma$ be a normal projective variety with a birational morphism $\pi_1\colon\Gamma\to X'$ and a morphism $\pi_2\colon\Gamma\to X'$ such that $f'^n=\pi_2\circ\pi_1^{-1}$.
Then $(f'^n)^*(L^i):= (\pi_1)_*\pi_2^*(L^i)$.
The definition of $(f'^n)^*(L^i)$ does not depend on the choice of $\Gamma$, $\pi_1$ and $\pi_2$.
The $i$-th \textit{dynamical degree} of $f$ is
$$
	\la_i(f):=\lim_{n\to\infty}((f'^n)^*(L^i)\cdot L^{\dim X-i})^{1/n}.
$$
The limit converges and does not depend on the choice of $X'$ and $L$
\cite{Russakovskii1997, Dinh2005, Truong2020,Dang2020}.
Moreover, if $\pi: X\dashrightarrow Y$ is a generically finite and dominant rational map between varieties and $g\colon Y\dashrightarrow Y$ is a rational self-map such that $g\circ\pi=\pi\circ f$, then $\la_i(f)=\la_i(g)$ for all $i$;
for details, we refer to \cite[Theorem 1]{Dang2020} (and the projection formula), or Theorem 4 in its arXiv version \cite{Dang}.

The following result is easy  when $\bk$ is of characteristic 0 and $Z\not\subseteq \Sing X$.

\begin{pro}\cite[Proposition 3.2]{Jia2021}\label{p:dyn_sub_var}
Let $X$ be a variety over $\bk$ and $f\colon X\dashrightarrow X$ a dominant rational self-map.
Let $Z$ be an irreducible subvariety in $X$ which is not contained in $I(f)$ such that $f|_Z$ induces a dominant rational self-map of $Z$.
Then $\la_i(f|_Z)\leq \la_i(f)$ for $i=0,1,\dots,\dim Z$.
\end{pro}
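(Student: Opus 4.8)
The plan is to reduce to the known characteristic-zero/smooth-ambient situation by a combination of normalization, a flattening argument, and the birational-invariance of dynamical degrees that was recalled just before the statement. First I would replace $X$ by a normal projective variety $X'$ birational to $X$; since $\la_i(f)$ is a birational invariant of $f$ (and $\la_i(f|_Z)$ of $f|_Z$, once one checks $Z$ has a well-defined proper transform $Z'$ on $X'$ on which the induced map is still dominant), this is harmless. So assume $X$ is normal and projective and $L$ is a fixed ample divisor on $X$. Next I would like to compare $(f^n|_Z)^*(L|_Z)^i\cdot (L|_Z)^{d_Z-i}$ with $(f^n)^*(L^i)\cdot L^{\dim X - i}$, where $d_Z=\dim Z$.

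The key step is the following intersection-theoretic inequality: for a dominant rational self-map $f$ and an $f$-invariant subvariety $Z\not\subseteq I(f)$, one has
\[
\bigl((f^n|_Z)^*(L|_Z)^i\cdot (L|_Z)^{d_Z-i}\bigr) \;\leq\; C\cdot \bigl((f^n)^*(L^i)\cdot H_1\cdots H_{\dim X - i}\bigr)
\]
for suitable fixed ample divisors $H_j$ and a constant $C$ independent of $n$ (here we only need $i\leq d_Z$). To produce this I would resolve the indeterminacy of $f^n$ by a normal $\Gamma_n$ with $\pi_1,\pi_2\colon \Gamma_n\to X$, take the strict transform $Z_n\subseteq\Gamma_n$ of $Z$ under $\pi_1$, and observe that $\pi_1|_{Z_n}\colon Z_n\to Z$ and $\pi_2|_{Z_n}\colon Z_n\to Z$ compute $(f^n|_Z)^*$. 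Then $(f^n|_Z)^*(L|_Z)^i\cdot(L|_Z)^{d_Z-i} = \pi_2^*L^i\cdot\pi_1^*L^{d_Z - i}\cdot[Z_n]$ on $\Gamma_n$, while $(f^n)^*L^i\cdot L^{\dim X - i} = \pi_2^*L^i\cdot\pi_1^*L^{\dim X - i}\cdot[\Gamma_n]$. Since $[Z_n]$ is an effective cycle whose pushforward $(\pi_1)_*[Z_n]=[Z]$ is dominated (as a cycle class) by a bounded multiple of $L^{\dim X - d_Z}$ — or more robustly, since $Z_n\hookrightarrow\Gamma_n$ and $\pi_1^*L$ is nef and big — one bounds $\pi_1^*L^{d_Z - i}\cdot[Z_n]$ above by $\pi_1^*L^{\dim X - i}\cdot[\Gamma_n]$ up to a constant depending only on $L$ and $Z$, after intersecting with the nef class $\pi_2^*L^i$. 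Taking $n$-th roots and letting $n\to\infty$ then gives $\la_i(f|_Z)\leq\la_i(f)$.

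I expect the main obstacle to be making the cycle-theoretic comparison $\pi_1^*L^{d_Z-i}\cdot[Z_n]\preceq \pi_1^*L^{\dim X-i}\cdot[\Gamma_n]$ genuinely uniform in $n$, i.e. independent of the chosen resolution $\Gamma_n$. The clean way around this is to cite \cite[Proposition 3.2]{Jia2021} itself — the statement we are proving is attributed there — but if one wants a self-contained argument the right move is: fix once and for all a birational model where $Z$ is Cartier-cut-out-able, use that the class of $[Z_n]$ in $N^{\dim X - d_Z}(\Gamma_n)$ is $\pi_1^*[Z]$ plus $\pi_1$-exceptional effective classes, and note that exceptional classes contribute nonnegatively after capping with the nef classes $\pi_1^*L^a\cdot\pi_2^*L^b$. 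A secondary (but routine) point to check is the base case of the reduction: that the proper transform $Z'$ of $Z$ on the normal model $X'$ is not contained in $I(f')$ and that $f'|_{Z'}$ is still dominant, so that $\la_i(f|_Z)$ is unchanged; this follows because birational maps restrict to birational maps on proper transforms away from the (closed, lower-dimensional) modification locus.
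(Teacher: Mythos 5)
Your proposal is correct and follows essentially the argument of the cited source \cite[Proposition 3.2]{Jia2021} (the paper itself gives no proof, but deploys the identical technique in its proof of Proposition \ref{proupboundarth}): reduce to a normal projective model, realize the strict transform $Z_n\subseteq\Gamma_n$ as an irreducible component of the proper intersection $\bigcap_j\pi_1^*H_j$ with $H_j\in|cL|$ cutting out $Z$, use \cite[Lemma 3.3]{Jia2021} to see that $\prod_j\pi_1^*H_j-[Z_n]$ pairs nonnegatively with products of nef classes, and cap with $\pi_2^*L^i\cdot\pi_1^*L^{d_Z-i}$ to get $((f^n|_Z)^*(L|_Z)^i\cdot(L|_Z)^{d_Z-i})\leq c^{\dim X-d_Z}((f^n)^*L^i\cdot L^{\dim X-i})$ uniformly in $n$. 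Of the three justifications you float for the key comparison, only this last one is valid — the pushforward bound on $(\pi_1)_*[Z_n]$ does not transfer back up to $\Gamma_n$ against $\pi_2^*L^i$, bigness of $\pi_1^*L$ alone gives nothing, and ``$[Z_n]=\pi_1^*[Z]$ plus exceptional effective classes'' has the wrong sign and is not even well defined for higher-codimension cycles on singular models — so the wobbly alternatives should simply be deleted.
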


\subsection{Arithmetic degree}
The arithmetic degree was defined in \cite{Kawaguchi2016} over a number field or a function field of characteristic zero.
In this section we extend this definition to the case over function field of positive characteristic and we prove some basic fact of it.

Let $\bk=\overline{K(B)}$, where $K$ is an algebraically closed field and $B$ is a smooth projective curve. 

\subsubsection{Weil height}
Let $X$ be a normal and projective variety over $\bk.$ 
For every $L\in \Pic(X)$, we denote by $h_L: X(\bk)\to \R$ a Weil height associated to $L$ and the function field $K(B)$. It is unique up to adding a bounded function. 

\begin{exe}\label{exekbheight}
Assume that $X$ is defined over $K(B)$ i.e. there is a projective morphism $\pi: X_B\to B$ where $X_B$ is normal, projective and geometric generic fiber of $\pi$ is $X$.
Assume that there is a line bundle $L_B$ on $X_B$ whose restriction on $X$ is $L$. In this case, for every $x\in X(\bk)$, we may take $h_L$ to be 
$$h_{(X_B, L_B)}(x)=[K(B)(x):K(B)]^{-1}(\overline{x}\cdot L),$$
where $\overline{x}$ is the Zariski closure of $x$ in $X_B.$
\end{exe}

Keep the notations in Example \ref{exekbheight}.
Let $b$ be a point in $B(K).$ It induces a norm $|\cdot|_b$ on $K(B)$. Denote by $K(B)_b$ the completion of $K(B)$ w.r.t. $|\cdot|_b$.
Denote by $\C_b$ the completion of $\overline{K(B)_b}.$ Every field embedding $\tau: \bk=\overline{K(B)}\hookrightarrow \C_b$ induces an embedding 
$\phi_{\tau}: X(\bk)\hookrightarrow X(\C_b).$ On $X(\C_b)$, we have a natural $b$-adic topology induced by  $|\cdot|_b$. 

\begin{rem}\label{remopenred}Let $x_b$ be a point in $X_b$. Then $x_b$ defines a nonempty open subset $U_{x_b}$ consisting of all points in $X(\C_b)$ whose reduction is $x_b\in X_b(K).$
Then for every $x\in \phi_{\tau}^{-1}(U_{x_b})$,  $x_0$ is contained in the Zariski closure of $x$ in $X_B.$
\end{rem}
\begin{lem}\label{lemlocheig}
There is $d\geq 1$ such that for every $b\in B(K),$ every nonempty $b$-adic open subset of $U\subseteq X(\C_b),$
and  every $l\geq 1$, there is $x\in X(\bk)$ such that $\deg(x)\leq d$ and $h_L(x)\geq l$.
\end{lem}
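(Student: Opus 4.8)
The plan is to reduce everything to the explicit case of $\P^n$ by means of a Noether normalisation, to produce on $\P^n$ points of degree $1$ and arbitrarily large standard height by perturbing one affine coordinate by a large power of a uniformiser at $b$, and then to pull these points back to $X$, controlling the height of the pullback by comparing $L$ with the pullback of $\mathcal{O}_{\P^n}(1)$. (I read the conclusion as also asserting that $\phi_\tau(x)$ lies in $U$ for the given embedding $\tau$; otherwise $U$ plays no role and the statement is a triviality about existence of points of large height.)

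First I would fix the geometry. We may assume $X$ is defined over $K(B)$ with a normal projective model $\pi\colon X_B\to B$ and $L=L_B|_{X_\eta}$ for a line bundle $L_B$ on $X_B$ (here $X_\eta$ is the generic fibre of $\pi$); since replacing $L_B$ by $L_B+m\pi^*(\mathrm{pt})$ changes $h_L$ only by a bounded function, we may assume $L_B$ is ample on $X_B$. Put $n=\dim X\ge 1$. As $K(B)$ is infinite there is a finite surjective morphism $g\colon X_\eta\to\P^n_{K(B)}$ over $K(B)$ of some degree $e$; replacing $X_B$ by the normalisation of $\P^n_B$ in $K(X_\eta)$ — a model birational to $X_B$, hence with the same height function on $X(\bk)$ — we may assume $g$ extends to a finite morphism $g_B\colon X_B\to\P^n_B$ over $B$, so that each fibre $g_b\colon X_b\to\P^n_K$ is finite, of degree bounded by a constant $e'$ independent of $b$. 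On the normal variety $X_B$ the class $N:=g_B^*\mathcal{O}_{\P^n_B}(1)$ is a divisor class; as $L_B$ is ample, Serre's theorem gives $c\ge 1$ with $|cL_B-N|\ne\emptyset$, so $D:=cL-N$ is effective on $X_\eta$, and functoriality of heights together with the lower bound $h_D\ge -O(1)$ off $\Supp D$ yields a constant $C$ with
\[
c\,h_L(z)\ \ge\ h_{\P^n}(g(z))-C\qquad\text{for }z\in X_\eta(\bk)\setminus\Supp D,
\]
$h_{\P^n}$ being the standard height on $\P^n(\bk)$ over $K(B)$.

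Next, fix $b$, an embedding $\tau\colon\bk\hookrightarrow\C_b$ extending the chosen $K(B)\hookrightarrow K(B)_b$, and a nonempty $b$-adic open $U$; by Remark~\ref{remopenred} fix $x_b\in X_b(K)$ with $U_{x_b}\subseteq U$ and set $w_b:=g_b(x_b)\in\P^n_K$. Choose affine coordinates and a $K$-line $\ell$ through $w_b$, generically so that $\ell\not\subseteq g(\Supp D)$, $g^{-1}(\ell)$ is a curve finite over $\ell$ no component of which lies in $\Supp D$, and $g$ is flat along $g^{-1}(\ell)$ over a dense open of $\ell$. Writing $w_b=(a_1,\dots,a_n)$ with $\ell=\{(a_1+u,a_2,\dots,a_n)\}$ and picking $s\in K(B)$ with a simple zero at $b$, put $w^{(m)}:=(a_1+s^m,a_2,\dots,a_n)\in\P^n(K(B))$; then $\deg w^{(m)}=1$, $w^{(m)}$ reduces to $w_b$ at $b$, and $h_{\P^n}(w^{(m)})=m\deg(\mathrm{div}_\infty s)\ge m$. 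The closure $\Sigma$ of $w^{(m)}$ in $\P^n_B$ is a section meeting $\P^n_b$ only in $w_b$, so $g_B^{-1}(\Sigma)$ is finite over $B$ and meets $X_b$ in $g_b^{-1}(w_b)\ni x_b$; letting $T$ be the component of $g_B^{-1}(\Sigma)$ through $x_b$ and $z^{(m)}\in X_\eta(\bk)$ its generic point, we get $g(z^{(m)})=w^{(m)}$, $\deg z^{(m)}=[K(T):K(B)]\le e$, and $z^{(m)}$ lies on the fixed irreducible curve $C\subseteq X_\eta$ corresponding to $T$, which is not contained in $\Supp D$. Since $m\mapsto z^{(m)}$ is injective on $C$, for all but finitely many $m$ we have $z^{(m)}\notin\Supp D$, whence
\[
h_L(z^{(m)})\ \ge\ \tfrac1c\bigl(h_{\P^n}(w^{(m)})-C\bigr)\ \ge\ \tfrac1c(m-C)\ \ge\ l
\]
once $m$ is large. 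Finally $\phi_\tau(z^{(m)})$ reduces at $b$ to a point of $T\cap X_b\subseteq g_b^{-1}(w_b)$, and choosing $T$ (equivalently $\ell$, $s$, and which coordinate to perturb) so that this point is $x_b$ — e.g. so that $T\to B$ is totally ramified over $b$ — we obtain $\phi_\tau(z^{(m)})\in U_{x_b}\subseteq U$. Taking $d:=e'$, which depends only on $(X,L)$, finishes the proof.

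The step I expect to be the real obstacle is the last one: guaranteeing that the bounded-degree point pulled back from $w^{(m)}$ actually reduces to the prescribed $x_b$, rather than to some other point of the finite fibre $g_b^{-1}(w_b)$ — equivalently, producing, for every $x_b$, a multisection through $x_b$ of uniformly bounded degree that is pinned to $x_b$ over $b$. The remaining points — the indeterminacy and flatness bookkeeping when $X$ is singular or $g_B$ degenerates over finitely many fibres, and the check that $\mathrm{red}(\phi_\tau(z^{(m)}))$ is independent of $\tau$ (an isometric automorphism of $\C_b$ over $K(B)_b$ acts trivially on the residue field $K$) — only affect the value of $d$, not its existence.
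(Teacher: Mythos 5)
Your reading of the statement is the right one: the printed conclusion forgets to mention $U$, but the paper's own proof and the unique application (Corollary \ref{cordenseal}, via Remark \ref{remopenred}) show that the intended conclusion is $\phi_\tau(x)\in U$. Your route is also the paper's route: Noether normalisation to $\P^n$, explicit $K(B)$-points of unbounded height obtained by perturbing a coordinate by powers of a function vanishing at $b$, and transfer back to $X$. The paper compresses the transfer into one sentence (``we only need to prove the lemma when $X=\P^N$''); the height comparison via the effectivity of $cL_B-g_B^*\sO(1)$, the degree bound $e$ for the lifts, and the avoidance of $\Supp D$ that you spell out are exactly the content of that reduction, and that part of your argument is fine.

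The gap is the step you flag yourself, and the fix you propose for it fails. Once $x_b$, $g$, $\ell$ and $s$ are chosen, the component $T$ of $g_B^{-1}(\Sigma)$ through $x_b$ is determined, and nothing forces $T\to B$ to be totally ramified over $b$; for generic data the opposite happens ($T\to B$ is unramified over $b$ and $T\cap X_b$ consists of $\deg(T/B)$ points), so the particular $\bk$-point $z^{(m)}$ you selected may reduce to a point of $g_b^{-1}(w_b)$ other than $x_b$. The repair is cheaper than what you attempt: the generic point of $T$ is a closed point of $X_{K(B)}$, i.e.\ a full Galois orbit of $\bk$-points, all of the same degree $\le e$ and the same height; for the fixed $\tau$, the composites $\tau\circ\sigma\colon K(T)\hookrightarrow\C_b$, as $\sigma$ runs over the $K(B)$-embeddings of $K(T)$ into $\bk$, realise every place of $K(T)$ above $b$, in particular the place centred at $x_b$ (which exists because $x_b\in T$ and the normalisation of $T$ surjects onto $T$). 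The corresponding conjugate of $z^{(m)}$ reduces to $x_b$, and that is the point to take. Separately, note that you only treat open sets containing a full residue disc $U_{x_b}$; an arbitrary nonempty $b$-adic open $U$ need not contain one, and the paper instead moves an algebraic point of $U$ itself to the origin and lets the constructed points converge to it. Since the lemma is only ever invoked with $U=U_{x_b}$ this is harmless in practice, but your proof as written does not cover the stated generality.
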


\proof By Noether normalization lemma, we only need to prove the lemma when $X=\P^N$ and $L=O(1).$
After replace $K(B)$ by a finite extension,  a changing of coordinates, we may assume that $0\in U.$
We may assume that $h_L$ is the naive height on $\P^N$ i.e. the height defined by the model $(\P^N_B, O_{\P^N(B)}(1)).$
Pick any rational function $g\in K(B)\setminus \{0\}$ with $g(b)=0.$ Then for $n\geq 1$, $x_n:=(g^n,\dots, g^n)\in \A^N(K(B)).$
We have $h_L(x_n)\to \infty$ as $n\to \infty$ and $\phi_{\tau}(x_n)\to 0$ in the $b$-adic topology. This concludes the proof.
\endproof

\subsubsection{Admissible triples.}
As in \cite{Jia2021}, we define an \textit{admissible triple} to be $(X,f,x)$ where $X$ is a quasi-projective variety over $\bk$, $f\colon X\dashrightarrow X$ is a dominant rational self-map and $x\in X_f(\bk)$.

We say that $(X,f,x)$ \textit{dominates} (resp.~\textit{generically finitely dominates}) $(Y,g,y)$ if there is a dominant rational map (resp.~generically finite and dominant rational map) $\pi\colon X\dashrightarrow Y$ such $\pi\circ f=g\circ\pi$, $\pi$ is well defined along $O_f(x)$ and $\pi(x)=y$.

We say that $(X,f,x)$ is \textit{birational} to $(Y,g,y)$ if there is a birational map $\pi\colon X\dashrightarrow Y$ such $\pi\circ f=g\circ\pi$ and if there is a Zariski dense open subset $V$ of $Y$ containing $O_g(y)$ such that $\pi|_U: U:=\pi^{-1}(V)\to V$ is a well-defined isomorphism and $\pi(x)=y$.
In particular, if $(X,f,x)$ is birational to $(Y,g,y)$, then $(X,f,x)$ generically finitely dominates $(Y,g,y)$.

\begin{rem}
\leavevmode
\begin{enumerate}
	\item If $(X,f,x)$ dominates $(Y,g,y)$ and if $O_f(x)$ is Zariski dense in $X$, then $O_g(y)$ is Zariski dense in $Y$.
	Moreover, if $(X,f,x)$ generically finitely dominates $(Y,g,y)$, then $O_f(x)$ is Zariski dense in $X$ if and only if $O_g(y)$ is Zariski dense in $Y$.
	\item Every admissible triple $(X,f,x)$ is birational to an admissible triple $(X',f',x')$ where $X'$ is projective.
	Indeed, we may pick $X'$ to be any projective compactification of $X$, $f'$ the self-map of $X'$ induced from $f$, and $x'=x$.
	\end{enumerate}
\end{rem}


\subsubsection{The set $A_f(x)$.}
As in \cite{Jia2021}, we will associate to an admissible triple $(X,f,x)$ a subset $$A_f(x)\subseteq [1,\infty].$$
\begin{rem}
We will show in Proposition \ref{proupboundarth} that $A_f(x)\subseteq [1,\la_1(f)].$
\end{rem}

We first define it when $X$ is projective. Let $L$ be an ample divisor on $X$, we define
$$A_f(x)\subseteq [1,\infty]$$
to be the limit set of the sequence $(h_L^+(f^n(x)))^{1/n}$, $n\geq 0$, where $h_L^+(\cdot):=\max\{h_L(\cdot),1\}$.

The following lemma was proved in \cite[Lemma 3.8]{Jia2021} when $\bk=\overline{\Q}$, but its proof still works our case.
It shows that the set $A_f(x)$ does not depend on the choice of $L$ and is invariant in the birational equivalence class of $(X,f,x)$.

\begin{lemma}\cite[Lemma 3.8]{Jia2021}\label{lemsingwilldef}
Let $\pi\colon X\dashrightarrow Y$ be a dominant rational map between projective varieties.
Let $U$ be a Zariski dense open subset of $X$ such that $\pi|_U\colon U\to Y$ is well-defined.
Let $L$ be an ample divisor on $X$ and $M$ an ample divisor on $Y$.
Then there are constants $C\geq 1$ and $D>0$ such that for every $x\in U$, we have
\begin{equation}\label{equationdomineq1}
	h_M(\pi(x))\leq Ch_L(x)+D.
\end{equation}

Moreover if $V:=\pi(U)$ is open in $Y$ and $\pi|_U\colon U\to V$ is an isomorphism, then 
 there are constants $C\geq 1$ and $D>0$ such that for every $x\in U$, we have
\begin{equation}\label{equationbirdomineq}
	C^{-1}h_L(x)-D\leq h_M(\pi(x))\leq Ch_L(x)+D.
\end{equation}
\end{lemma}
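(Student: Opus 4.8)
The plan is to establish \eqref{equationdomineq1} by realising the morphism $\pi|_U$ by explicit polynomial maps on a finite affine cover of $U$, and then to deduce \eqref{equationbirdomineq} by applying \eqref{equationdomineq1} to the inverse morphism $(\pi|_U)^{-1}$. First, since $h_{nN}=nh_N+O(1)$ for every line bundle $N$ and $n\geq 1$, and since we are free to enlarge $C$ and $D$ at the end, we may replace $L$ and $M$ by positive multiples and so assume $L$ and $M$ are very ample. Choosing closed immersions $\iota_X\colon X\hookrightarrow \P^{a}_{\bk}$ and $\iota_Y\colon Y\hookrightarrow \P^{b}_{\bk}$ with $\iota_X^*O(1)=O_X(L)$ and $\iota_Y^*O(1)=O_Y(M)$, the heights $h_L$ and $h_M$ agree, up to $O(1)$, with the pull-backs by $\iota_X$, resp.\ $\iota_Y$, of a Weil height $h$ on $\P^{a}$, resp.\ $\P^{b}$, associated with $K(B)$; so it suffices to bound $h(\iota_Y(\pi(x)))$ in terms of $h(\iota_X(x))$.

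For \eqref{equationdomineq1}: the line bundle $\pi^*O_{\P^{b}}(1)$ on $U$ is trivial on each member of some finite affine open cover $U=W_1\cup\cdots\cup W_k$ (here we use only that $U$ is quasi-compact). On $W_l$ the morphism $\iota_Y\circ\pi$ is given by regular functions $\phi_{l,0},\dots,\phi_{l,b}\in O(W_l)$ without common zero; writing each $\phi_{l,i}$ as a ratio of two homogeneous polynomials of equal degree in the coordinates of $\P^{a}$ whose denominator is invertible on $W_l$ (possible because $W_l$ is an affine locally closed subvariety of $\P^{a}$) and clearing denominators, one obtains homogeneous polynomials $H_{l,0},\dots,H_{l,b}$ of one common degree $e_l\geq 1$, without common zero on $W_l$, with $\iota_Y(\pi(x))=[H_{l,0}(x):\cdots:H_{l,b}(x)]$ for $x\in W_l$. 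The elementary estimate for the behaviour of heights under polynomial maps over the function field $K(B)$ then provides a constant $D_l$, depending only on the coefficients of the $H_{l,i}$ and not on $x$, with $h(\iota_Y(\pi(x)))\leq e_l\,h(\iota_X(x))+D_l$ on $W_l$. Taking $C:=\max_l e_l$ and $D:=\max_l D_l$ (after absorbing the $O(1)$'s relating $h$ to $h_L$ and $h_M$) yields \eqref{equationdomineq1}. I expect this step to be essentially bookkeeping; the one point that requires care is the uniformity of $D_l$ in $x\in W_l$, which is clear once $\pi$ is written out as above.

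For \eqref{equationbirdomineq}, assume in addition that $V:=\pi(U)$ is open in $Y$ and $\pi|_U\colon U\to V$ is an isomorphism, and put $\sigma:=(\pi|_U)^{-1}\colon V\to U\subseteq X$. Since a nonempty open subset of an irreducible variety is dense, $V$ is a Zariski dense open subset of $Y$, and $\sigma$ is a morphism on $V$, hence extends to a rational map $Y\dashrightarrow X$. Applying the part of the lemma already established, i.e.\ \eqref{equationdomineq1}, to this rational map — with $(Y,M)$ in the role of $(X,L)$, $(X,L)$ in the role of $(Y,M)$, and $V$ in the role of $U$ — gives constants $C'\geq 1$, $D'>0$ with $h_L(\sigma(y))\leq C'h_M(y)+D'$ for all $y\in V$. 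Putting $y=\pi(x)$ with $x\in U$ gives $h_L(x)\leq C'h_M(\pi(x))+D'$, that is $h_M(\pi(x))\geq (C')^{-1}h_L(x)-(C')^{-1}D'$; together with \eqref{equationdomineq1}, and after enlarging $C$ and $D$, this is \eqref{equationbirdomineq}. The main conceptual point of the whole argument is that $U$ and $V$ need not be proper, so the usual uniqueness-up-to-$O(1)$ of Weil heights is not directly available on them; this is why the proof goes through an explicit polynomial description of $\pi|_U$ on a finite affine cover rather than through formal functoriality of heights for morphisms of projective varieties.
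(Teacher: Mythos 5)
Your proof is correct. Note first that the paper itself supplies no argument for this lemma: it simply cites \cite[Lemma 3.8]{Jia2021} and asserts that the proof there (written for $\bk=\overline{\Q}$) carries over verbatim to the function-field setting. The argument in the cited source is of a different flavour from yours: it works on (a normalization of) the graph $\Gamma$ of $\pi$, with projections $p_1\colon\Gamma\to X$ (birational, an isomorphism over the domain of definition of $\pi$, hence over $U$) and $p_2\colon\Gamma\to Y$, and compares $h_{p_2^*M}$ with $h_{p_1^*L}$ using positivity of divisor classes ($p_1^*L$ is big and nef, so $Cp_1^*L-p_2^*M$ is effective up to numerical/linear equivalence for $C\gg0$) together with Weil's height machine; the delicate point there is controlling the locus where the ``$h_E\geq -O(1)$ for $E$ effective'' step applies. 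Your route instead reduces to the elementary estimate for heights under maps given by homogeneous polynomials, after trivializing $(\pi|_U)^*O_{\P^b}(1)$ on a finite affine cover; this is more pedestrian but completely self-contained, and it makes the uniformity of the constants over the non-proper set $U$ transparent, which is exactly the crux of the statement. Your deduction of \eqref{equationbirdomineq} by applying \eqref{equationdomineq1} to $(\pi|_U)^{-1}$ is the same formal step one would use in any approach. The only point I would tighten is the claim that each regular function on $W_l$ is globally a single ratio of homogeneous polynomials with denominator invertible on $W_l$: for an arbitrary affine open this can fail, but it holds on basic (distinguished) affine opens of $X\cap\{x_0\neq0\}$, and refining your finite cover to such basic opens costs nothing. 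This is bookkeeping, not a gap.
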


Now for every admissible triple $(X,f,x)$, we define $A_f(x)$ to be $A_{f'}(x')$ where $(X',f',x')$ is an admissible triple which is birational to $(X,f,x)$ such that $X'$ is projective.
By Lemma~\ref{lemsingwilldef}, this definition does not depend on the choice of $(X',f',x')$.

\subsubsection{The arithmetic degree.}\label{subsec_arithdeg}
We define (see also \cite{Kawaguchi2016}):
\[
	\overline{\alpha}_f(x):=\sup A_f(x),\qquad\underline{\alpha}_f(x):=\inf A_f(x).
\]
We say that $\alpha_f(x)$ is well-defined and call it the \textit{arithmetic degree} of $f$ at $x$, if $\overline{\alpha}_f(x)=\underline{\alpha}_f(x)$;
and, in this case, we set
\[
	\alpha_f(x):=\overline{\alpha}_f(x)=\underline{\alpha}_f(x).
\]
By Lemma~\ref{lemsingwilldef}, if $(X,f,x)$ dominates $(Y,g,y)$, then $\overline{\alpha}_f(x)\geq \overline{\alpha}_g(y)$ and $\underline{\alpha}_f(x)\geq\underline{\alpha}_g(y)$.

Applying Inequality~\eqref{equationdomineq1} of Lemma~\ref{lemsingwilldef} to the case where $Y=X$ and $M=L$, we get the following trivial upper bound:
let $f\colon X\dashrightarrow X$ be a dominant rational self-map, $L$ any ample line bundle on $X$ and $h_L$ a Weil height function associated to $L$;
then there is a constant $C\geq 1$ such that for every $x\in X\setminus I(f)$, we have
\begin{equation}\label{equationtrivialupper}
	h_L^+(f(x))\leq Ch_L^+(x).
\end{equation}
For a subset $A\subseteq [1,\infty)$, define $A^{1/\ell}:= \{a^{1/\ell}\mid a\in A\}$.

We have the following simple properties, where the second half of \ref{eq:alpha_pow} used Inequality~\eqref{equationtrivialupper}.
\begin{pro}\label{probasicaf}We have:
\begin{enumerate}
	\item $A_f(x)\subseteq [1,\infty)$.
	\item $A_f(x)=A_f(f^{\ell}(x))$, for any $\ell\geq 0$.
	\item \label{eq:alpha_pow}
	$A_{f}(x)=\bigcup_{i=0}^{\ell-1}(A_{f^{\ell}}(f^i(x)))^{1/\ell}$.
	In particular, $\overline{\alpha}_{f^{\ell}}(x)=\overline{\alpha}_{f}(x)^{\ell}$, $\underline{\alpha}_{f^{\ell}}(x)=\underline{\alpha}_{f}(x)^{\ell}$.
\end{enumerate}
\end{pro}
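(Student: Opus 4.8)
The plan is to establish the three items in order, since each builds on the previous one. First I would prove (1): by the remark preceding the definition of $A_f(x)$ (to be justified in Proposition \ref{proupboundarth}), we have $A_f(x)\subseteq[1,\la_1(f)]$, but in fact for (1) it suffices to show the sequence $(h_L^+(f^n(x)))^{1/n}$ is bounded, which follows immediately by iterating the trivial upper bound \eqref{equationtrivialupper}: from $h_L^+(f^{n}(x))\leq C\,h_L^+(f^{n-1}(x))$ one gets $h_L^+(f^n(x))\leq C^n h_L^+(x)$, hence $(h_L^+(f^n(x)))^{1/n}\leq C\,(h_L^+(x))^{1/n}\to C$. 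Every limit point of a sequence in $[1,\infty)$ that is bounded above lies in $[1,\infty)$, so $A_f(x)\subseteq[1,\infty)$. Here I am using that $A_f(x)$ is defined via a projective model $(X',f',x')$, so $L$ may be taken ample on a projective variety and \eqref{equationtrivialupper} applies.

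Next, item (2): $A_f(x)=A_f(f^\ell(x))$. We may assume $X$ projective and $L$ ample. The sequences $(h_L^+(f^n(x)))^{1/n}_{n\geq0}$ and $(h_L^+(f^{n}(f^\ell(x))))^{1/n}_{n\geq0}=(h_L^+(f^{n+\ell}(x)))^{1/n}_{n\geq0}$ differ only by a shift of index by $\ell$, and shifting the index by a fixed constant does not change the set of limit points of a sequence of the form $a_n^{1/n}$ — indeed $a_{n+\ell}^{1/n}=\bigl(a_{n+\ell}^{1/(n+\ell)}\bigr)^{(n+\ell)/n}$ and $(n+\ell)/n\to1$, so the two sequences have the same limit points. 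This gives the equality of limit sets.

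For item (3), $A_f(x)=\bigcup_{i=0}^{\ell-1}(A_{f^\ell}(f^i(x)))^{1/\ell}$: again pass to a projective model with $L$ ample. Write any $n\geq 0$ as $n=\ell q+i$ with $0\leq i\leq \ell-1$. Then $h_L^+(f^n(x))=h_L^+((f^\ell)^q(f^i(x)))$, so $(h_L^+(f^n(x)))^{1/n}=\bigl((h_L^+((f^\ell)^q(f^i(x))))^{1/q}\bigr)^{q/n}$ and $q/n\to 1/\ell$ as $n\to\infty$ along the residue class $i\bmod\ell$. Hence the limit points of $(h_L^+(f^n(x)))^{1/n}$ along the arithmetic progression $n\equiv i$ are exactly the $\ell$-th roots of the limit points of $(h_L^+((f^\ell)^q(f^i(x))))^{1/q}$, i.e. $(A_{f^\ell}(f^i(x)))^{1/\ell}$. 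Taking the union over the finitely many residues $i=0,\dots,\ell-1$, which together cover all $n$, gives $A_f(x)=\bigcup_{i=0}^{\ell-1}(A_{f^\ell}(f^i(x)))^{1/\ell}$. For the "in particular" clause: $\overline{\alpha}_{f^\ell}(x)=\sup A_{f^\ell}(x)$, and by (2) applied with $f$ replaced by $f^\ell$ we have $A_{f^\ell}(f^i(x))=A_{f^\ell}(x)$ for every $i$ (since $f^i(x)=(f^\ell)^0(f^i(x))$... more directly, $A_{f^\ell}(f^i(x))$ and $A_{f^\ell}(x)$ — wait, these need $i$ a multiple of $\ell$); instead argue directly: from (3), $\overline{\alpha}_f(x)=\sup\bigcup_i (A_{f^\ell}(f^i(x)))^{1/\ell}=\max_i (\overline{\alpha}_{f^\ell}(f^i(x)))^{1/\ell}$, and since $\overline{\alpha}_{f^\ell}(f^i(x))\leq\overline{\alpha}_{f^\ell}(x)$... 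The cleanest route is: the full sequence $(h_{L}^+(f^n(x)))^{1/n}$ has $\limsup$ equal to $\overline{\alpha}_f(x)$, and by the index-substitution above its $\limsup$ equals $(\limsup_q (h_L^+((f^\ell)^q(x)))^{1/q})^{1/\ell}=\overline{\alpha}_{f^\ell}(x)^{1/\ell}$ (taking $i=0$ suffices for the $\limsup$ since the $i=0$ subsequence already realizes it up to the monotonicity given by \eqref{equationtrivialupper}), giving $\overline{\alpha}_{f^\ell}(x)=\overline{\alpha}_f(x)^\ell$; the $\underline\alpha$ statement is symmetric with $\liminf$.

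The routine part is the bookkeeping with limit points under the reparametrization $n\mapsto(q,i)$; the only genuine input beyond elementary analysis is the trivial height bound \eqref{equationtrivialupper}, needed in (1) to get boundedness and again in the "in particular" clause of (3) to ensure the $i=0$ subsequence controls the $\limsup$/$\liminf$ of the full sequence. I expect the main obstacle to be stating precisely why passing to a residue class does not lose any limit points of $a_n^{1/n}$ — this is where one must be careful that $q/n\to 1/\ell$ uniformly enough that $\bigl(b_q\bigr)^{q/n}$ and $b_q^{1/\ell}$ have the same limit points, which holds because $b_q$ is bounded away from $0$ and $\infty$.
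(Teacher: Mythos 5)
Your proof is correct and follows exactly the route the paper intends: the paper omits the argument, remarking only that the second half of (3) uses Inequality~\eqref{equationtrivialupper}, and your write-up supplies the standard bookkeeping (boundedness of $(h_L^+(f^n(x)))^{1/n}$ from the trivial upper bound, invariance of limit sets under index shifts and under exponents tending to $1$ for bounded sequences, and the partition of $\N$ into residue classes mod $\ell$). Your mid-proof self-correction is resolved the right way: (2) applied to $f^\ell$ does \emph{not} give $A_{f^\ell}(f^i(x))=A_{f^\ell}(x)$ for $0<i<\ell$, and the two-sided comparison of the residue-class subsequences via \eqref{equationtrivialupper} is precisely what is needed to get $\overline{\alpha}_{f^{\ell}}(f^i(x))=\overline{\alpha}_{f^{\ell}}(x)$ and $\underline{\alpha}_{f^{\ell}}(f^i(x))=\underline{\alpha}_{f^{\ell}}(x)$, which yields the "in particular" clause.
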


The following lemma is easy.
\begin{lemma}
\label{lem_subvar}
Let $f\colon X\dashrightarrow X$ be a dominant rational self-map of a projective variety $X$ and $W\subseteq X$ an $f$-invariant subvariety.
Then $X_f(\bk)\cap W(\bk)\subseteq W_{f|_W}(\bk)$ and for every $x\in X_f(\bk)\cap W(\bk)$,
$\alpha_{f|_W}(x)=\alpha_f(x).$
\end{lemma}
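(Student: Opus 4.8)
The plan is to reduce everything to the definition of $A_f(x)$ as the limit set of $(h_L^+(f^n(x)))^{1/n}$, and to the observation that heights on $W$ computed via an ample divisor on $W$ and via the restriction of an ample divisor on $X$ differ only by the usual comparison bounds of Lemma~\ref{lemsingwilldef}. First I would check the set-theoretic inclusion $X_f(\bk)\cap W(\bk)\subseteq W_{f|_W}(\bk)$: if $x\in W(\bk)$ has a well-defined $f$-orbit, then since $W$ is $f$-invariant (so $I(f)$ contains no component of $W$ and $f(W)\subseteq W$), the map $f|_W$ is a well-defined dominant rational self-map of $W$, and the orbit $O_f(x)$, which lies in $W$, coincides with the $f|_W$-orbit of $x$; in particular that orbit is well defined for $f|_W$, so $x\in W_{f|_W}(\bk)$.

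Next, for the equality of arithmetic degrees, fix $x\in X_f(\bk)\cap W(\bk)$. Let $L$ be an ample divisor on $X$; then $L|_W$ is ample on $W$, and since $O_f(x)\subseteq W$ we have for every $n$ that $h_L(f^n(x)) = h_{L|_W}(f^n(x)) + O(1)$, where the $O(1)$ is a bounded function on $W(\bk)$ (the Weil height associated to $L|_W$ is, by functoriality of Weil heights under the closed immersion $W\hookrightarrow X$, equal to the restriction of $h_L$ up to a bounded term). Passing to $h^+$ changes things by at most an additive constant as well. Therefore $(h_L^+(f^n(x)))^{1/n}$ and $(h_{L|_W}^+((f|_W)^n(x)))^{1/n}$ have the same limit set: indeed if $a_n>0$, $b_n>0$ and $|a_n-b_n|\le C$ while $a_n\to\infty$ along a subsequence, then $a_n^{1/n}$ and $b_n^{1/n}$ have the same accumulation points, and the bounded orbits contribute only the value $1$ to both limit sets, which is already the minimum of $A_f(x)\subseteq[1,\infty)$ by Proposition~\ref{probasicaf}(1). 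Hence $A_{f|_W}(x) = A_f(x)$ when $X$ (hence $W$) is projective, and taking $\sup$ and $\inf$ gives $\overline\alpha_{f|_W}(x)=\overline\alpha_f(x)$ and $\underline\alpha_{f|_W}(x)=\underline\alpha_f(x)$; in particular $\alpha_f(x)$ is well defined if and only if $\alpha_{f|_W}(x)$ is, and then they agree.

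I do not see a serious obstacle here: the statement is essentially bookkeeping once one has Lemma~\ref{lemsingwilldef} and the definition of $A_f$. The one point requiring a little care — and the closest thing to a "hard part" — is the justification that the height on $W$ attached to $L|_W$ agrees with $h_L|_{W(\bk)}$ up to $O(1)$; this is the standard functoriality of Weil heights under a closed immersion, and for non-projective $X$ one first replaces $(X,f,x)$ by a birational projective model $(X',f',x')$ (taking the closure $\overline W$ of $W$ in $X'$, which is $f'$-invariant) as in the definition of $A_f$, using Lemma~\ref{lemsingwilldef} to see nothing depends on the model. One should also note the degenerate possibility that $W$ is a point or that $O_f(x)$ is finite, in which case both sides equal $\{1\}$ and the conclusion is trivial.
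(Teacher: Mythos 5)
Your proof is correct; note that the paper itself gives no argument for this lemma (it is stated with the remark ``The following lemma is easy''), and what you write is precisely the standard argument being left implicit: $I(f|_W)\subseteq I(f)\cap W$ gives the inclusion of orbit sets, and functoriality of Weil heights under the closed immersion $W\hookrightarrow X$ gives $h_{L|_W}=h_L+O(1)$ on $W(\bk)$, whence the two sequences $h^+$ have bounded ratio and identical limit sets after taking $n$-th roots. The one point you rightly flag --- that a bounded additive discrepancy between quantities $\geq 1$ washes out in the $n$-th root --- is the only content, and you handle it correctly.
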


When $\bk=\overline{\Q}$, the next result was proved in \cite[Theorem 1.4]{Matsuzawa2020a} in the smooth case and in \cite[Proposition 3.11]{Jia2021} in the singular case.
The proof here in the function field case is much easier.

\begin{pro}[Kawaguchi-Silverman-Matsuzawa's upper bound]\label{proupboundarth}
For every admissible triple $(X,f,x_0)$, we have $\overline{\alpha}_f(x_0)\leq \la_1(f)$.
\end{pro}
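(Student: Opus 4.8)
The plan is to reduce immediately to the case where $X$ is projective and normal, since $A_f(x_0)$ is a birational invariant of the admissible triple by Lemma~\ref{lemsingwilldef}, and passing to a normalization is generically finite. So assume $X$ is normal projective, $L$ an ample divisor on $X$, and write $x_n := f^n(x_0)$. The goal is to show $\limsup_n h_L^+(x_n)^{1/n} \le \la_1(f)$, i.e. that for every $\varepsilon > 0$ we have $h_L(x_n) = O\big((\la_1(f)+\varepsilon)^n\big)$.

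First I would set up the key geometric input: resolve the indeterminacy of $f$ by taking a normal projective variety $\pi\colon \Gamma \to X$ with a morphism $\sigma\colon \Gamma \to X$ realizing $f = \sigma \circ \pi^{-1}$. Then $f^*L := \pi_*\sigma^*L$ and, more relevantly, the iterates: from the submultiplicativity of degrees of $(f^n)^*L$ against $L^{\dim X - 1}$ one gets that for every $\varepsilon>0$ there is $C_\varepsilon$ with $(f^n)^*L \cdot L^{\dim X-1} \le C_\varepsilon(\la_1(f)+\varepsilon)^n$. The standard mechanism (as in Kawaguchi--Silverman and Matsuzawa's arguments, but far simpler over a function field) is the existence of a constant $C$ and an ample divisor such that $f^*L \le \delta L + E$ up to numerical/linear equivalence on a fixed model, iterating which produces a height inequality of the shape $h_L(f(x)) \le C' h_L(x) + (\text{lower order})$ with the growth of $C'$ controlled; more precisely one wants the refined functoriality $h_{f^*L}(x) = h_L(f(x)) + O(1)$ on $X \setminus I(f)$, together with a comparison $h_{(f^n)^*L} \le (\text{coeff}) \cdot h_L + O(1)$ where the coefficient is essentially $(f^n)^*L\cdot L^{\dim X-1}$ up to a fixed multiple coming from the ampleness of $L$ (Siu-type bound / the fact that $N^1$ is finite dimensional and $L$ ample dominates any fixed class). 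Combining: $h_L(x_n) = h_{(f^n)^*L}(x_0) + O(1) \le C_\varepsilon(\la_1(f)+\varepsilon)^n h_L^+(x_0) + O(\sum_{k<n}(\la_1(f)+\varepsilon)^k)$, which is $O((\la_1(f)+\varepsilon)^n)$. Taking $n$-th roots and letting $\varepsilon \to 0$ gives the claim.

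The step I expect to be the main obstacle is making precise the comparison $h_{(f^n)^*L}(x_0) \preceq \big((f^n)^*L \cdot L^{\dim X - 1}\big)\, h_L^+(x_0) + (\text{controlled error})$ in the function-field setting over $\bk = \overline{K(B)}$ with possibly positive characteristic, where $(f^n)^*L$ is a cycle class rather than a genuinely effective divisor and where the model $X_B \to B$ and the height machinery of Example~\ref{exekbheight} must be invoked uniformly in $n$. The cleanest route is: fix a very ample $H$ on $X$; since $N^1(X)_\R$ is finite-dimensional and $H$ is ample, there is $a>0$ with $aH - L$ ample, so $h_L \le a h_H + O(1)$; and for effective divisors in the class $(f^n)^*L$ one controls their $H$-degree by $(f^n)^*L \cdot H^{\dim X-1}$, while functoriality of heights along the birational correspondence $\Gamma$ (projection formula for heights, valid over function fields) transfers this to a bound on $h_L(x_n)$. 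One must check that the pullback-of-height identity $h_{\sigma^*L}(y) = h_L(\sigma(y)) + O(1)$ and the push-forward inequality along $\pi$ hold with constants independent of $n$ once $\Gamma$, $\pi$, $\sigma$ are chosen for a fixed $n_0$ and then one works with $f^{n_0}$ — i.e. one may first replace $f$ by an iterate using Proposition~\ref{probasicaf}\ref{eq:alpha_pow}, which rescales $\la_1$ correctly, to reduce the combinatorial bookkeeping. The positive-characteristic subtlety (inseparability, Frobenius) does not actually enter, because the dynamical degree $\la_1(f)$ and all the intersection-theoretic inequalities used are characteristic-free, and the height theory over $K(B)$ only uses intersection numbers on the arithmetic model $X_B$; so the proof should indeed be, as the paper promises, easier than over $\overline{\Q}$ where archimedean places and arithmetic intersection theory complicate the error terms.
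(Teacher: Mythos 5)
Your reduction to the normal projective case and the use of Proposition~\ref{probasicaf} to pass to an iterate are fine, but the core of your argument has a genuine gap, and it sits exactly where you say you expect "the main obstacle": the comparison $h_{(f^n)^*L}(x_0)\preceq \big((f^n)^*L\cdot L^{\dim X-1}\big)h_L^+(x_0)+(\text{controlled error})$ with an error term uniform in $n$. This is not a routine consequence of finite-dimensionality of $N^1$ or of "functoriality of heights along $\Gamma$." The class $(f^n)^*L$ is defined via a resolution $\Gamma_n\to X$ that changes with $n$, the identity $h_{\sigma^*L}=h_L\circ\sigma+O(1)$ carries an $O(1)$ depending on the chosen model and representative, and summing or comparing these over $n$ produces constants with no a priori subexponential control. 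Over $\overline{\Q}$ this uniformity is precisely the content of Matsuzawa's theorem and requires a delicate argument; your sketch does not supply a substitute for it, so as written the proof does not close.

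The paper avoids this issue entirely by never comparing Weil height functions up to $O(1)$: it works on a projective model $\pi\colon\sX\to B$ and uses the height of Example~\ref{exekbheight}, so that $h_L(f^n(x_0))$ \emph{is} the intersection number $(\overline{f^n(x_0)}\cdot L_B)$ of the closure of the point (a multisection of $\pi$) with $L_B$. Writing $F\colon\sX\dashrightarrow\sX$ for the self-map induced on the total space, the strict transform of $\overline{x_0}$ under a resolution of $F^n$ is a component of a complete intersection of members of $|L_B|$, and the pseudo-effectivity statement \cite[Lemma 3.3]{Jia2021} paired against the nef class $\pi_2^*L_B$ gives, in one step and with no error term at all, $h_L(f^n(x_0))\leq ((F^n)^*L_B\cdot L_B^{\dim X+1-1})$. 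The remaining input, which your proposal also omits, is the relative dynamical degree formula $\la_1(F)=\max\{1,\la_1(f)\}=\la_1(f)$ of \cite{Dang}: without it one only controls degrees of $f^n$ on the generic fiber, not degrees of $F^n$ on the total space $\sX$, and it is the latter that bounds the height. If you want to salvage your route, the fix is to replace the whole "height functoriality up to $O(1)$" machinery by this model-theoretic identity; your closing remark that "the height theory over $K(B)$ only uses intersection numbers on the arithmetic model" is the right instinct, but it must be the proof rather than a comment on it.
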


\begin{proof}
We may assume that $X$ is projective. Set $d:=\dim X.$
After replacing $f$ by a suitable iteration and $x_0$ by $f^n(x_0)$ for some $n\geq 0$ and noting that $\la_1(f^n)=\la_1(f)^n$ and by Proposition \ref{probasicaf},
 we may assume that the Zariski closure $Z_f(x_0)$ of $O_f(x_0)$ is irreducible.
 By Proposition~\ref{p:dyn_sub_var} and Lemma~\ref{lem_subvar}, we may replace 
 $X$ by $Z_f(x_0)$ and  assume that $O_f(x_0)$ is Zariski dense in $X$.

Assume that $X$ is defined over $K(B)$ i.e. there is a projective morphism $\pi: \sX\to B$ where $\sX$ is projective, normal and geometric generic fiber of $\pi$ is $X$.
Pick an ample line bundle $L_B$ on $\sX$ and let  $L$ be its restriction to $X$. 
We take the Weil height $h_L: X(\bk)\to \R$ as follows: for every $x\in X(\bk)$,  
$$h_L(x):=h_{(\sX, L_B)}(x)=[K(B)(x):K(B)]^{-1}(\overline{x}\cdot \sL).$$
We may assume that $x_0$ is defined over $K(B)$. 

Let $F: \sX\dashrightarrow \sX$ be the rational self-map over $B$ induced by $f.$
The relative dynamical degree formula \cite[Theorem 4]{Dang}, shows that $$\la_1(F)=\max\{1,\la_1(f)\}=\la_1(f).$$
So for every $r>0$, there is $C_r>0$ such that for every $n\geq 0$, 
\begin{equation}\label{equcnln}((F^n)^*L_B\cdot L_B^d)\leq C_r (\la_1(f)+r)^n.
\end{equation}

Let $\sI$ be the ideal sheaf of $\overline{x_0}$ on $\sX.$
After replacing $L_B$ be a suitable multiple, we may assume that  
$\sL\otimes \sI$ is globally generated.
For every $n\geq 0$, there are divisors $H_i, i=0,\dots,d$ in $|L_B|$ such that 
$\dim H_1\cap \dots \cap H_d=1$ and containing $\overline{x_0}$ as an irreducible component.

Set $V_n:=H_1\cdot \dots \cdot H_d$. 
Let $\Gamma$ be a normal projective variety with a birational morphism $\pi_1\colon\Gamma\to \sX$ and a morphism $\pi_2:\Gamma\to \sX$ such that $F^n=\pi_2\circ\pi_1^{-1}$.
Write $(\pi_1)^{\#}\overline{x_0}$ the strict transform of $V^n$  $\overline{x_0}$ by $\pi_1^N.$
Then $(\pi_1)^{\#}\overline{x_0}$ is an irreducible component of $\cap_{i=1}^d(\pi_1^*H_i).$
In $N^1(\Gamma)$, we have $\pi_1^*V_n=\pi_1^*H_1\cdot\dots \cdot \pi^*H_d.$
By \cite[Lemma 3.3]{Jia2021}, $\pi_1^*V_n-(\pi_1)^{\#}\overline{x_0}$ is pseudo-effective.
Then we have
$$h_L(f^n(x_0))=(\overline{f^n(x_0)}\cdot L_B)=((\pi_1)^{\#}\overline{x_0}\cdot \pi_2^*L_B)$$
$$\leq (\pi_1^*H_1\cdot \dots \cdot \pi^*_1H_d\cdot \pi_2^*L_B)=((F^n)^*L_B\cdot L_B^d).$$
 $$\leq C_r (\la_1(f)+r)^n.$$
 It follows that 
 $$\overline{\alpha}_f(x_0)=\limsup_{n\to \infty}h_L(f^n(x_0))^{1/n}\leq \lim_{n\to \infty}(C_r (\la_1(f)+r)^n)^{1/n}=\la_1(f)+r.$$
 Letting $r\to \infty$, we conclude the proof.
\end{proof}
%

\subsection{Canonical height}
Let $X$ be a normal projective variety and $f: X\to X$ a surjective endomorphism.

Let $A$ be an ample divisor of $X$, denote by $h_A$ a Weil height on $X(\bk)$ associated to $A$ with $h_A\geq 1.$ 
\begin{pro}\label{procanheight}
Let $D$ be a nonzero Cartier $\R$-divisor such that $f^*D\equiv\beta D$ where $\beta> \la_1(f)^{1/2}.$
Let $[D]\in N^1(X)_{\R}$ be the numerical class of $D.$
Then 
for every $x\in X(\bk)$, the limit
$h_{[D]}^+(x):=\lim_{n\to \infty}h_{D}(f^n(x))/\beta^n$
exist, only depend on the numerical class $[D]$ and
satisfies the following properties:
\begin{points}
\item $h_{[D]}^+=h_{D}+O(h_A^{1/2})$;
\item $h_{[D]}^+\circ f=\beta h^+$.
\end{points}
\end{pro}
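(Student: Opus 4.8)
The plan is to construct $h_{[D]}^+$ as a telescoping limit and control the error using the trivial upper bound from Proposition~\ref{proupboundarth} together with Inequality~\eqref{equationtrivialupper}. First I would fix an ample divisor $A$ with $h_A \geq 1$. Writing $f^*D \equiv \beta D$, one can find an ample $\R$-divisor $A$ (enlarging if necessary) such that both $A \pm D$ are ample, hence $\pm h_D \leq h_A + O(1)$, so $|h_D| \leq h_A + O(1)$. More importantly, since $f^*D \equiv \beta D$, the height associated to $D$ satisfies $h_D \circ f = \beta h_D + O(h_A^{1/2})$: indeed $f^*A \equiv A'$ for some ample class, and by Proposition~\ref{proupboundarth} applied to $(X,f)$ with the ample divisor $A$, one gets $h_A(f(x)) \leq C\, h_A(x)^{1/2}\cdot(\text{something})$ — more precisely the quasi-functoriality of Weil heights gives $h_{f^*D} = h_D \circ f + O(h_A^{1/2})$ once we know $h_A \circ f \leq C h_A + C$ (Inequality~\eqref{equationtrivialupper}) and that the difference $f^*D - \beta D$ is numerically trivial, so its height is $O(h_A^{1/2})$ by the standard bound that numerically trivial divisors have height $O(h_A^{1/2})$ (this uses the function field setup, where numerically trivial classes pair to give a height dominated by the square root of an ample height — this is the analogue of the classical fact over number fields).

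Next, set $g_n(x) := h_D(f^n(x))/\beta^n$. To show the sequence is Cauchy, estimate
\[
|g_{n+1}(x) - g_n(x)| = \beta^{-(n+1)}\, |h_D(f^{n+1}(x)) - \beta\, h_D(f^n(x))| \leq \beta^{-(n+1)} \cdot C\, h_A^+(f^n(x))^{1/2}.
\]
Now by Inequality~\eqref{equationtrivialupper} there is $C' \geq 1$ with $h_A^+(f(y)) \leq C' h_A^+(y)$, so $h_A^+(f^n(x)) \leq (C')^n h_A^+(x)$. Therefore
\[
|g_{n+1}(x) - g_n(x)| \leq C\, \beta^{-(n+1)} (C')^{n/2} h_A^+(x)^{1/2}.
\]
For this to be summable I need $\beta > (C')^{1/2}$, which is not immediately $\beta > \la_1(f)^{1/2}$ — so the key point is to choose $A$ and the constant $C'$ well. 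The right move: by Proposition~\ref{proupboundarth}, $\overline{\alpha}_f(x) \leq \la_1(f)$ for every $x$, so for any $r > 0$ one has $h_A^+(f^n(x)) \leq C_{r,x} (\la_1(f)+r)^n$. Using this in place of the crude bound gives $|g_{n+1}(x)-g_n(x)| \leq C\, C_{r,x}^{1/2}\, \beta^{-(n+1)}(\la_1(f)+r)^{n/2}$, which is summable as soon as $\beta > (\la_1(f)+r)^{1/2}$; since $\beta > \la_1(f)^{1/2}$ we may choose $r$ small enough. Hence $(g_n(x))$ converges; call the limit $h_{[D]}^+(x)$.

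Finally, the two asserted properties: property $h_{[D]}^+ = h_D + O(h_A^{1/2})$ follows by telescoping $h_{[D]}^+(x) - h_D(x) = \sum_{n\geq 0}(g_{n+1}(x) - g_n(x))$ and bounding by the geometric-type series above, but now I need the bound uniform in $x$ — this is where I must be careful that $C_{r,x}$ depends on $x$; the cleaner route is to go back to $|g_{n+1}(x)-g_n(x)| \leq C\beta^{-(n+1)}(C')^{n/2}h_A^+(x)^{1/2}$ only for the \emph{first} step comparison and instead prove $|h_{[D]}^+ - h_D| \leq C'' h_A^{1/2}$ directly from $|h_D\circ f - \beta h_D| \leq C h_A^{1/2}$ plus $h_A \circ f \leq C' h_A$, choosing the iterate so that $\beta^2 > \la_1(f)$ absorbs $C'$ — i.e. first replace $f$ by $f^\ell$ (legitimate since $f^{\ell *}D \equiv \beta^\ell D$ and $\beta^\ell > \la_1(f^\ell)^{1/2} = \la_1(f)^{\ell/2}$ for the same reason), and for $\ell$ large the effective growth constant for $h_A$ along $f^\ell$-orbits can be taken close to $\la_1(f)^\ell$, hence $< \beta^{2\ell}$. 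Property $h_{[D]}^+ \circ f = \beta h_{[D]}^+$ is immediate from the definition: $h_{[D]}^+(f(x)) = \lim_n h_D(f^{n+1}(x))/\beta^n = \beta \lim_n h_D(f^{n+1}(x))/\beta^{n+1} = \beta h_{[D]}^+(x)$. Independence of the choice within the numerical class $[D]$ follows because two divisors $D, D'$ with $D \equiv D'$ differ by a numerically trivial class, whose height is $O(h_A^{1/2}) = o(\beta^n)$, so the limits agree.

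\textbf{Main obstacle.} The crux is establishing the estimate $|h_D \circ f - \beta h_D| = O(h_A^{1/2})$ in the positive-characteristic function-field setting: one needs that a numerically trivial $\R$-divisor class has an associated height bounded by $O(h_A^{1/2})$ (not merely $O(h_A)$), and that this interacts correctly with pullback. Over number fields this is classical (theory of heights on abelian varieties / the $\hat{c}$-pairing); here it should follow from the model-theoretic description of heights in Example~\ref{exekbheight} via intersection numbers on $\sX \to B$, decomposing a numerically trivial class and invoking the Hodge index theorem on fibered surfaces or a Cauchy–Schwarz-type inequality for intersection pairings, but making this rigorous and uniform is the delicate part. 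Everything else is routine telescoping once that estimate and the replacement $f \rightsquigarrow f^\ell$ are in hand.
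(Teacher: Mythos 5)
Your proposal is correct and follows essentially the same route as the paper: establish the one-step estimate $|h_D(f(x))-\beta h_D(x)|\leq Ch_A(x)^{1/2}$, bound $h_A(f^n(x))\leq C_x\mu^{2n}h_A(x)$ for some $\mu\in(\la_1(f)^{1/2},\beta)$ via Proposition \ref{proupboundarth}, and telescope $h_D(f^n(x))/\beta^n$. The \emph{main obstacle} you single out is exactly \cite[Proposition B.3]{Matsuzawa2020a}, which the paper simply cites rather than reproves; and the paper does not take your detour through $f^\ell$ to make the $O(h_A^{1/2})$ in (i) uniform --- it accepts the $x$-dependent constant $C_x$ supplied by Proposition \ref{proupboundarth}.
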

\proof
This result was proved in \cite[Theorem 5]{Kawaguchi2016}  in characteristic zero. 
The proof presented here is the same as \cite[Theorem 5]{Kawaguchi2016}, but slightly shorter.  

By \cite[Proposition B.3]{Matsuzawa2020a}, there is $C>0$ such that for every $x\in X(\bk)$,
$$|h_D(f(x))-\beta h_D(x)|\leq Ch_A(x)^{1/2}.$$
Pick $\mu\in (\la_1(f)^{1/2}, \beta),$ by Proposition \ref{proupboundarth}, for every $x\in X(\bk)$, there is $C_x>0$ such that ,
$$h_A(f^n(x))\leq C_x\mu^{2n}h_A(x).$$
Then we have $$|h_{D}(f^n(x))/\beta^n-h_{D}(f^{n-1}(x))/\beta^{n-1}|=\beta^{-n}|h_{D}(f^n(x))-\beta h_{D}(f^{n-1}(x))|$$
$$\leq \beta^{-n}Ch_A(f^{n-1}(x))^{1/2}\leq \beta^{-n}CC_x^{1/2}\mu^nh_A(x)^{1/2}=CC_x^{1/2}(\mu/\beta)^nh_A(x)^{1/2}.$$
Since $0<\mu/\beta<1$,  
$$h_{[D]}^+(x)=h_D(x)+\sum_{n\geq 1}(h_{D}(f^n(x))/\beta^n-h_{D}(f^{n-1}(x))/\beta^{n-1})$$
converges and 
$$|h_{[D]}^+(x)-h_D(x)|\leq \sum_{n\geq 1}|h_{D}(f^n(x))/\beta^n-h_{D}(f^{n-1}(x))/\beta^{n-1}|$$
$$\leq (\sum_{n\geq 1}CC_x^{1/2}(\mu/\beta)^n)h_A(x)^{1/2}=O(h_A(x)^{1/2}).$$
Then we get (i).
The statement (ii) follows from the definition.

For $D'\equiv D$, by \cite[Proposition B.3]{Matsuzawa2020a}, there is $B>0$ such that for every $x\in X(\bk)$,
$$|h_{D'}(x)-h_D(x)|\leq Bh_A(x)^{1/2}.$$
Then
$$|h_{[D']}^+(x)-h_{[D]}^+(x)|:=\lim_{n\to \infty}|h_{D'}(f^n(x))-h_{D}(f^n(x))|/\beta^n$$
$$\leq \limsup_{n\to \infty}Bh_A(f^n(x))^{1/2}/\beta^n\leq \limsup_{n\to \infty}BC_xh_A(x)^{1/2}(\mu/\beta)^n=0,$$
which concludes the proof.\endproof

The following was proved in \cite[Lemma 9.1]{Matsuzawa2018} when $\bk=\overline{\Q}$ and $X$ is smooth.
After replacing \cite[Theorem 5]{Kawaguchi2016} by Proposition \ref{procanheight},
\cite[Lemma 9.1]{Matsuzawa2018} is still valid when $\bk=K(B)$ and $X$ is singular.
\begin{pro}\label{proextarhitd}
Assume that $\la_1(f)>1$.
Let $D\not\equiv 0$ be a nef $\R$-Cartier divisor on $X$ such that $f^*D\equiv \la_1(f)D$.
Let $V\subseteq X$ be a subvariety of positive dimension such that $(D^{\dim V}\cdot V)>0$.
Then there exists a nonempty open subset $U\subseteq V$ and a set $S\subseteq U(\bk)$ of bounded height such that for every $x\in U(\bk)\setminus S$ we have $\alpha_f(x)=\la_1(f)$.
\end{pro}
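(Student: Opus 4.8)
The plan is to build the canonical height $\hat h_{[D]}^+$ from Proposition~\ref{procanheight} with $\beta = \la_1(f)$ (legitimate since $\la_1(f) > 1 > \la_1(f)^{1/2}$ when $\la_1(f)>1$, but we actually need $\beta > \la_1(f)^{1/2}$, which holds because $\la_1(f) > \la_1(f)^{1/2}$ precisely when $\la_1(f)>1$), and then to argue that on a suitable open subset of $V$ the height $h_D$ is comparable to $h_A$, so that $\hat h_{[D]}^+(x)>0$ for all but a bounded-height set; combined with Proposition~\ref{proupboundarth} this pins $\alpha_f(x)$ to $\la_1(f)$. First I would record that, by Proposition~\ref{procanheight}, the limit $\hat h_{[D]}^+(x) = \lim_n h_D(f^n(x))/\la_1(f)^n$ exists, satisfies $\hat h_{[D]}^+ = h_D + O(h_A^{1/2})$ and $\hat h_{[D]}^+\circ f = \la_1(f)\,\hat h_{[D]}^+$. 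In particular $\hat h_{[D]}^+$ is $f$-invariant up to the scalar $\la_1(f)$, so if $\hat h_{[D]}^+(x)>0$ then $h_D(f^n(x)) = \la_1(f)^n\hat h_{[D]}^+(x) + O(h_A(f^n(x))^{1/2})$, and since $h_D \leq C h_A + D'$ (as $A$ is ample, $h_D$ is bounded above by a multiple of $h_A$ up to a constant) we get $h_A(f^n(x)) \gg \la_1(f)^n$, whence $\underline\alpha_f(x) \geq \la_1(f)$; the reverse inequality $\overline\alpha_f(x)\leq \la_1(f)$ is Proposition~\ref{proupboundarth}, so $\alpha_f(x)$ is well-defined and equals $\la_1(f)$.

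It therefore remains to produce a nonempty open $U\subseteq V$ on which $\hat h_{[D]}^+ > 0$ outside a set of bounded height. The idea is that since $(D^{\dim V}\cdot V)>0$, the restriction $D|_V$ is a nef and big $\R$-divisor on $V$; hence there is an effective $\R$-divisor $E$ on $V$ and an ample $\R$-divisor $H$ on $V$ with $D|_V \equiv H + E$ (Kodaira-type decomposition for nef and big divisors). Take $U := V\setminus \Supp E$, a nonempty open subset of $V$. On $U$ one has, up to bounded functions, $h_{D|_V} = h_H + h_E$ with $h_E \geq -O(1)$ on $U$ (the height of an effective divisor is bounded below off its support) and $h_H \gg h_{A|_V}$ since $H$ is ample; thus $h_{D|_V} \geq c\, h_{A|_V} - O(1)$ on $U(\bk)$ for some $c>0$. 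Now invariance of $\alpha_f$ and of the canonical height construction under passing to an $f$-invariant subvariety (here one should first shrink, using Proposition~\ref{p:dyn_sub_var}/Lemma~\ref{lem_subvar}, to an $f$-invariant situation — but $V$ need not be $f$-invariant, so instead I would work directly on $X$) lets us compare: for $x\in U(\bk)$, $\hat h_{[D]}^+(x) = h_D(x) + O(h_A(x)^{1/2}) \geq c\, h_A(x) - O(h_A(x)^{1/2}) - O(1)$, which is positive as soon as $h_A(x)$ exceeds an explicit bound. Let $S\subseteq U(\bk)$ be the (bounded-height) set where this fails; for $x\in U(\bk)\setminus S$ we have $\hat h_{[D]}^+(x) > 0$, and the previous paragraph gives $\alpha_f(x) = \la_1(f)$.

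The main obstacle I expect is the comparison $h_D \geq c\,h_A - O(1)$ on $U$: one must be careful that $D$ is only a numerical class and only nef (not ample) on $X$, and big only after restriction to $V$, so the Kodaira decomposition and the functoriality of heights have to be applied on $V$ rather than on $X$, while the canonical height $\hat h_{[D]}^+$ is defined via the dynamics on $X$. Reconciling these — i.e.\ checking that the restriction to $V(\bk)$ of the ambient height $h_D$ agrees up to $O(h_A^{1/2})$ with a height attached to $D|_V$, and that the error terms are genuinely dominated once $h_A$ is large — is exactly the point handled in \cite[Lemma 9.1]{Matsuzawa2018}; the only new content here is checking that its proof survives verbatim with $\bk = \overline{K(B)}$ and $X$ possibly singular, which it does because the inputs it uses (Proposition~\ref{procanheight} in place of \cite[Theorem 5]{Kawaguchi2016}, and the height machinery over function fields from Lemma~\ref{lemsingwilldef} and \cite[Proposition B.3]{Matsuzawa2020a}) are now available in this generality.
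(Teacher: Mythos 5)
Your proposal is correct and is essentially the paper's proof: the paper simply cites \cite[Lemma 9.1]{Matsuzawa2018} and notes that its argument survives once \cite[Theorem 5]{Kawaguchi2016} is replaced by Proposition \ref{procanheight}, and what you have written out --- Kodaira decomposition of the big nef class $D|_V$, positivity of the canonical height on $U$ off a bounded-height set, and the squeeze against the upper bound of Proposition \ref{proupboundarth} --- is exactly that argument. (The parenthetical ``$\la_1(f)>1>\la_1(f)^{1/2}$'' is a slip, but you immediately correct it to the right condition $\la_1(f)>\la_1(f)^{1/2}$.)
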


\begin{cor}\label{cordenseal}Keep the notation in Proposition \ref{proextarhitd}.
For every Zariski dense open subset $U$ of $X$, there is $x\in U(\bk)$ such that $\alpha_f(x)=\la_1(f)$ and $O_f(x)\subseteq U$.
\end{cor}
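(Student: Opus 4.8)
The plan is to combine Proposition \ref{proextarhitd} with the upper bound $\overline{\alpha}_f(x)\leq\lambda_1(f)$ from Proposition \ref{proupboundarth}, together with a "moving" argument to force the orbit into the prescribed open set $U$. First I would use Proposition \ref{proextarhitd} applied to $V=X$: since $D\not\equiv 0$ is nef and $f^*D\equiv\lambda_1(f)D$ with $\lambda_1(f)>1$, and since $(D^{\dim X}\cdot X)>0$ — this positivity needs to be checked, but it follows from the fact that $D$ is nef, nonzero, and $f^*D\equiv\lambda_1(f)D$ forces the top self-intersection to be positive (otherwise the numerical class would be ``too degenerate'' to be scaled by a factor $>1$; concretely $(D^{\dim X})$ is an eigenvalue-type quantity that must be nonzero) — we obtain a nonempty open $U_0\subseteq X$ and a bounded-height set $S\subseteq U_0(\bk)$ such that $\alpha_f(x)=\lambda_1(f)$ for all $x\in U_0(\bk)\setminus S$.

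The key point is then to produce a point $x$ with $\alpha_f(x)=\lambda_1(f)$ whose \emph{entire} forward orbit lies in the given Zariski dense open $U$. The idea: if $O_f(x)\not\subseteq U$, then $f^n(x)\in X\setminus U$ for some $n$; but $X\setminus U$ is a proper closed subset, and I want to argue that ``most'' points of $U_0\setminus S$ have orbit avoiding $X\setminus U$. Here I would invoke the height growth: since $h_A^+(f^n(x))$ grows like $\lambda_1(f)^n\to\infty$ along the orbit (as $\alpha_f(x)=\lambda_1(f)>1$), only finitely many iterates of $x$ can land in any fixed set of bounded height. To exploit this, I would use Lemma \ref{lemlocheig}: pick a place $b\in B(K)$ and a $b$-adic open subset of $X(\C_b)$ contained in the open set $U$ (via a reduction argument as in Remark \ref{remopenred} — a point $x_b$ in the special fiber lying over $U$ gives such an open set, and points reducing to $x_b$ have Zariski closure meeting $U$, hence lie in $U$). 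Lemma \ref{lemlocheig} then produces, for every $l\geq 1$, a point $x\in U(\bk)$ of bounded degree $\leq d$ with $h_L(x)\geq l$. Taking $l$ large enough that $h_L(x)$ exceeds the height bound defining $S$ guarantees $x\notin S$; and I can further arrange (by choosing the $b$-adic neighbourhood small and using the trivial upper bound \eqref{equationtrivialupper} which shows orbits leave bounded-height regions in a controlled way, combined with forward-invariance of a suitable $b$-adic ball under $f$) that the whole orbit $O_f(x)$ stays in $U$.

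The main obstacle I expect is the last step: ensuring $O_f(x)\subseteq U$, not just $x\in U\setminus S$. The clean way is to choose the $b$-adic neighbourhood $\Omega\subseteq X(\C_b)$ so that it reduces into $U$ \emph{and} is forward-invariant under (a model of) $f$ — e.g. a residue ball at a point $x_b$ of the special fiber that is fixed, or at least whose orbit in the special fiber stays over $U$; good reduction of $f$ then propagates this to $\Omega$. Combined with the density statement of Lemma \ref{lemlocheig} inside $\Omega$ and a large enough height to escape $S$, this yields a point $x\in U(\bk)$ with $\alpha_f(x)=\lambda_1(f)$ (in particular $O_f(x)$ infinite) and $O_f(x)\subseteq U$, which is exactly the assertion. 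Alternatively, if arranging forward-invariance is delicate, one can instead argue by contradiction: the set of $x\in U_0(\bk)\setminus S$ with some iterate in $X\setminus U$ is a countable union over $n$ of $f^{-n}(X\setminus U)$, each a proper closed subset, so its complement in $U_0\setminus S$ is still large enough (nonempty, indeed dense in the $b$-adic sense) to contain a point of height $\geq l$, and for that point $\alpha_f(x)=\lambda_1(f)$ while $O_f(x)\subseteq U$.
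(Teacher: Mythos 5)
Your overall strategy (Proposition \ref{proextarhitd} $+$ Lemma \ref{lemlocheig} $+$ a reduction argument to trap the orbit in $U$) points in the right direction, but the first step has a genuine error that the rest of the argument inherits. You apply Proposition \ref{proextarhitd} with $V=X$, which requires $(D^{\dim X})>0$, and you assert that this positivity is forced by $f^*D\equiv \la_1(f)D$ with $\la_1(f)>1$. This is false, and in fact the opposite holds in a central case: when $f$ is an automorphism of a surface with $\la_1(f)=\la>1$, the paper itself computes $({\theta^*}^2)=\la^{-2}(f^*\theta^*\cdot f^*\theta^*)\cdot\la^2=({\theta^*}^2)$, forcing $({\theta^*}^2)=0$ (Section on numerical geometry). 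Since Corollary \ref{cordenseal} is precisely what drives Theorem \ref{thmendononpre}, which must cover positive-entropy automorphisms, the case $(D^{\dim X})=0$ cannot be excluded. The missing idea is to descend to an auxiliary subvariety: the paper constructs a curve $W'$ (an irreducible component of the generic fiber of a complete intersection $W_B$ of general very ample divisors on a model $\sX\to B$, irreducible by \cite[Theorem 0.4]{Benoist2011}) with $(D\cdot W')>0$, chosen to pass through a point of the special fiber, and applies Proposition \ref{proextarhitd} to $V=W'$.

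Your mechanism for keeping the orbit inside $U$ is close to the paper's in spirit, but it is incomplete at the decisive point: you say one should find a residue ball over a point $x_b$ of the special fiber ``whose orbit in the special fiber stays over $U$'' without explaining how to produce such an $x_b$. The paper does this via Proposition \ref{proxfnonempty} applied to $U_b:=X_b\setminus Z_B$ (where $Z_B$ is the closure of $X\setminus U$ in the model), obtaining $x_b$ with $O_{f_b}(x_b)\subseteq U_b$; then every $x_n$ reducing to $x_b$ automatically satisfies $O_f(x_n)\subseteq U$ by Remark \ref{remopenred}, and Lemma \ref{lemlocheig} supplies such $x_n$ on $W'$ with $h(x_n)\to\infty$, escaping the bounded-height set $S$. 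Your fallback argument is also not salvageable as stated: over $\bk=\overline{K(B)}$ with $K=\overline{\F_p}$ the field is countable, so a countable union of proper closed subsets such as $\bigcup_n f^{-n}(X\setminus U)$ can exhaust all closed points, and nonemptiness (let alone $b$-adic density among points of bounded degree and large height) of its complement does not follow.
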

\proof[Proof of Corollary \ref{cordenseal}]
We may assume that $X$ is normal and $X,f,D$ and $U$ are defined over $K(B).$
There is a normal and projective $B$-scheme $\pi: X_B\to B$ and a rational self-map $f_B: X_B\dashrightarrow X_B$ over $B$ such that the geometric generic fiber of $(X_B,f_B)$ is $(X,f).$
Let $b$ be a general point of $B(K)$ and denote by $(X_b,f_b)$ the fiber of $(X_B,f_B)$ above $b$. 
Then $f_b$ is an endomorphism of $X_b.$ Set $Z:=X\setminus U$. Let $Z_B$ be the Zariski closure of $Z$ in $X_B.$
Then $U_b:=X_b\setminus Z_B$. 
By Proposition \ref{proxfnonempty} (see Section \ref{subsectionexweldef} for its proof), there is $x_b\in (U_b)_{f_b|_{U_b}}(K).$
Let $M$ be a very ample line bundle on $X_B.$
Taking $W_B$ to be the intersection of $\dim X-1$ general elements of $|10M|$ of $X_B$ passing through $x_b.$
By \cite[Theorem 0.4]{Benoist2011},  $W_B$ is irreducible.
Let $W\subseteq X$ be the generic fiber of $W_B$. It is of pure dimension $1$.
Then $(W\cap D)>0.$ 
Because $W_B$ is irreducible, for every irreducible component $W'$ of $W$,
$(W'\cdot D)>0.$ By Lemma \ref{lemlocheig} and Remark \ref{remopenred}, there are $x_n\in W'(\bk), n\geq 0$ such that $x_b\in \overline{\{x_n\}}$ and the height of $x_n$ tends to $+\infty$.
Because $O_{f_b}(x_b)\subseteq U$, $O_f(x_n)\subseteq U$ for all $n\geq 0.$
By Proposition \ref{proextarhitd}, for $n>>0$,  we have $x_n\in V(\bk)\cap U$ and $\alpha_f(x_n)=\la_1(f)$.
\endproof

\section{Proof of Theorem \ref{dmlsurface}}
This proof mixes the ideas from \cite{Xie2014} and \cite{Lesieutre2021}.
\subsection{Reduce to the smooth case}
By \cite{Lipman1978}, there is a minimal desingularization $\pi:X'\to X$. Then one may lift $f$ to an automorphism $f'$ of $X'.$
The following lemma allows us to replace $(X,f)$ by $(X',f')$ and assume that $X$ is smooth.

\begin{lem}\label{lemredudesing}If $(X',f')$ satisfies the DML property, then $(X,f)$ satisfies the DML property.
\end{lem}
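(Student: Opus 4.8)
The plan is to reduce the DML property for $(X,f)$ to that for $(X',f')$ by pushing orbits up along the resolution $\pi\colon X'\to X$, handling the (finitely many) orbit points that meet the exceptional locus separately. First I would fix $x\in X(\bk)$ with well-defined $f$-orbit and a subvariety $V\subseteq X$, and set $N:=\{n\ge 0\mid f^n(x)\in V\}$. Since $f'$ is an automorphism, the exceptional locus $\Exc(\pi)$ is a finite union of curves permuted by $f'$; in particular there are only finitely many $f'$-periodic curves inside $\Exc(\pi)$, and every $f'$-orbit either stays in $\Exc(\pi)$ forever or meets it in at most finitely many $n$ — more precisely, because $f$ is an automorphism of $X$ and $\pi$ is an isomorphism over $X\setminus\pi(\Exc(\pi))$, the set $E:=\{n\ge 0\mid f^n(x)\in \pi(\Exc(\pi))\}$ is either all of $\Z_{\ge 0}$ or finite. (If $f^{n_0}(x)\in\pi(\Exc(\pi))$ then, $\pi(\Exc(\pi))$ being a finite $f$-invariant set of points, the forward orbit from $n_0$ on stays in it, so $E$ is an up-set; being a subset of a finite set's preimage it is then either finite or cofinite, and since it is an up-set it is $\Z_{\ge0}$ or finite.)

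In the case $E$ finite, for $n\notin E$ the point $f^n(x)$ lies in the locus where $\pi$ is an isomorphism, so there is a unique $x'_n\in X'$ with $\pi(x'_n)=f^n(x)$, and these agree with the $f'$-orbit of $x':=$ the lift of $f^{n_1}(x)$ for $n_1:=1+\max E$. Thus $x'\in X'(\bk)$ has a well-defined $f'$-orbit with $f'^{\,m}(x')=x'_{m+n_1}$ for all $m\ge 0$. Take $V':=\pi^{-1}(V)$ (with its reduced structure, a subvariety of $X'$ since $\pi$ is proper). For $n\ge n_1$ we have $f^n(x)\in V\iff f'^{\,n-n_1}(x')\in V'$. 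By hypothesis $(X',f')$ has DML, so $\{m\ge 0\mid f'^{\,m}(x')\in V'\}$ is a finite union of arithmetic progressions; shifting by $n_1$ and adjoining the finitely many $n<n_1$ in $N$ shows $N$ is a finite union of arithmetic progressions.

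In the case $E=\Z_{\ge 0}$, the entire orbit $O_f(x)$ lies in the finite set $\pi(\Exc(\pi))$, hence $O_f(x)$ is finite, $x$ is preperiodic, and then $N$ is visibly eventually periodic, i.e. a finite union of arithmetic progressions. This completes the argument.

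The main obstacle — or rather the point that needs care — is the bookkeeping for orbit points meeting $\Exc(\pi)$: one must check that "hitting $\pi(\Exc(\pi))$" is an up-set in $n$, which uses that $f$ is an \emph{automorphism} (so $\pi(\Exc(\pi))$ is a finite $f$-invariant set and the orbit cannot re-enter $X\setminus\pi(\Exc(\pi))$ after leaving it), and that lifts along $\pi$ over the isomorphism locus are unique and compatible with $f,f'$. Everything else is formal: $\pi^{-1}(V)$ is closed, finitely many exceptional $n$ can always be absorbed, and a finite union of arithmetic progressions is stable under finite shifts and finite modifications.
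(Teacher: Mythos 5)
Your proof is correct in substance but takes a genuinely different route from the paper's. The paper first replaces the DML property for a surface automorphism by the equivalent curve statement (``if $O_f(x)\cap C$ is infinite for an irreducible curve $C$, then $C$ is periodic''), and then transfers that statement through $\pi$ by choosing an irreducible component $C'$ of $\pi^{-1}(C)$ meeting $O_{f'}(x')$ infinitely often and noting that either $\pi(C')=C$ (so periodicity of $C'$ descends) or $\pi(C')$ is a point (so $x$ is preperiodic). You instead work directly with the definition: you lift the orbit itself over the isomorphism locus of $\pi$, pull back $V$ to the closed subset $\pi^{-1}(V)$, and dispose separately of orbits that enter the finite $f$-invariant set $\pi(\Exc(\pi))$. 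Your version is more self-contained (it does not invoke the unproved equivalence with the curve statement) and works verbatim for any birational morphism whose non-isomorphism locus downstairs is a finite invariant set; the paper's version is shorter because the surface-specific reduction does the bookkeeping for it. Both arguments implicitly use that $X$ is normal, so that $\pi(\Exc(\pi))$ is a finite set of points. One small slip: since $E$ is an up-set in $\Z_{\geq 0}$, the correct dichotomy is ``$E=\emptyset$ or $E=[n_0,\infty)$ for $n_0=\min E$,'' not ``$E$ finite or $E=\Z_{\geq 0}$''; the missing case $E=[n_0,\infty)$ with $n_0>0$ is handled exactly as your second case (the orbit from time $n_0$ on lies in the finite set $\pi(\Exc(\pi))$, so $x$ is preperiodic), so this does not affect the argument.
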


\proof
Assume that $(X',f')$ satisfies the DML property. 
We only need to prove the following statement:
for every $x\in X(\bk)$ and an irreducible curve $C\subseteq X(\bk)$, if $O_f(x)\cap C$ is infinite, then $C$ is $f$-periodic.

Pick $x'\in \pi^{-1}(x)(\bk)$. There is an irreducible component $C'$ of $\pi^{-1}(C)$ such that $O_{f'}(x')\cap C'$ is infinite.
We have $\dim C'\leq 1.$
If $\pi(C')\neq C$, then $\pi(C')$ is a point. Then $x=\pi(x')$ is periodic. So $\pi(C')= C$ and $\dim C'=1.$
Since $(X',f')$ satisfies the DML property, $C'$ is $f'$-periodic. So $C=\pi(C')$ is $f'$-periodic.
\endproof

\subsection{Numerical geometry}\label{subsectionnumgeo}
Set $\la:=\la_1(f)>1$. There is a nef class $\theta^*\in N^1(X)_{\R}\setminus \{0\}$ such that $f^*\theta^*=\la\theta^*.$
By projection formula $\la_1(f^{-1})=\la.$ So there is a nef class $\theta^*\in N^1(X)_{\R}\setminus \{0\}$ such that $(f^{-1})^*\theta_*=\la\theta_*.$
Then $f^*\theta_*=\la^{-1}\theta_*.$ 
Since $\la^2({\theta^*}^2)=({f^*\theta^*}^2)=({\theta^*}^2),$  we get $({\theta^*}^2)=0.$ Similarly, $({\theta_*}^2)=0.$
By Hodge index theorem, $(\theta^*\cdot \theta_*)>0.$ It follows that 
$(\theta^*+\theta_*)^2>0.$ So $\theta^*+\theta_*$ is big and nef.

Set $H:=\{\alpha\in \N^1(X)_{\R}|\,\, (\theta^*\cdot \alpha)=(\theta_*\cdot \alpha)=0\}.$  It is clear that $\N^1(X)_{\R}=\R\theta^*\oplus \R\theta_*\oplus H$ and $f^*H=H.$
By Hodge index theorem, the intersection form on $H$ is negative define. Since $f^*$ preserves the intersection form, all eigenvalues of $f^*|_H$ are of norm $1$.

Since $f^*$ is an automorphism of the lattes $N^1(X)\subseteq N^1(X)_{\R}$, all eigenvalues of $f^*: N^1(X)_{\R}\to N^1(X)_{\R}$ are algebraic integers.
In particular both $\la$ and $\la^{-1}$ are algebraic integers.

\begin{lem}\label{lemlaoneconj}There is $\sigma\in \Gal(\overline{\Q}/\Q)$ such that $\sigma(\la)=\la^{-1}.$
\end{lem}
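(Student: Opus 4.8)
The plan is to combine the spectral picture of $f^*$ obtained just above with the fact that $\la$ is an algebraic unit. From the $f^*$-invariant decomposition $N^1(X)_{\R}=\R\theta^*\oplus\R\theta_*\oplus H$, on which $f^*$ acts respectively by $\la$, by $\la^{-1}$, and (on $H$) with all eigenvalues of absolute value $1$, the characteristic polynomial $P(t)\in\Z[t]$ of $f^*\colon N^1(X)\to N^1(X)$ factors as $P(t)=(t-\la)(t-\la^{-1})\chi(t)$ with $\chi$ monic and all of its complex roots on the unit circle. In particular $\la$ is the only eigenvalue of $f^*$ of absolute value $>1$, and $\la^{-1}$ is the only eigenvalue of absolute value $<1$.

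Next I would record that $\la$ is a unit in the ring of algebraic integers. Both $\la$ and $\la^{-1}$ are algebraic integers (being roots of the monic integral polynomial $P$), so $N_{\Q(\la)/\Q}(\la)\in\Z$ and $N_{\Q(\la)/\Q}(\la)^{-1}=N_{\Q(\la)/\Q}(\la^{-1})\in\Z$, which forces $|N_{\Q(\la)/\Q}(\la)|=1$. Equivalently, writing $m(t)\in\Z[t]$ for the minimal polynomial of $\la$, one has $\prod_{\mu}|\mu|=|m(0)|=1$, the product being over the Galois conjugates $\mu$ of $\la$.

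Now suppose, for contradiction, that $\la^{-1}$ is not a Galois conjugate of $\la$, i.e.\ $m(\la^{-1})\neq 0$. Since $m$ is irreducible and $m(\la)=P(\la)=0$, we have $m\mid P$ in $\Q[t]$; hence every Galois conjugate $\mu$ of $\la$ is a root of $P$, i.e.\ an eigenvalue of $f^*$. By the first paragraph such a $\mu$ has $|\mu|>1$ only for $\mu=\la$ and $|\mu|<1$ only for $\mu=\la^{-1}$, so under our assumption every conjugate of $\la$ other than $\la$ itself lies on the unit circle. Then $\prod_{\mu}|\mu|=\la\cdot 1\cdots 1=\la>1$, contradicting the previous paragraph. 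Therefore $m(\la^{-1})=0$, so $\la^{-1}$ is a Galois conjugate of $\la$, and any $\sigma\in\Gal(\overline{\Q}/\Q)$ with $\sigma(\la)=\la^{-1}$ proves the lemma.

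The argument is short, and the only point that really needs to be in place is the description of the spectrum of $f^*$ in the first paragraph — but this is exactly the content of the Hodge-index-theorem discussion carried out just above (it is the classical observation that the dynamical degree of a surface automorphism, when larger than $1$, is a Salem number), so I do not expect a substantial obstacle here; the rest is the elementary norm computation.
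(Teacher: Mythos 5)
Your proof is correct and follows essentially the same route as the paper: both arguments rest on the facts that $\la$ and $\la^{-1}$ are algebraic integers (so $\la$ is a unit and the product of the absolute values of its conjugates is $1$) and that every Galois conjugate of $\la$ is an eigenvalue of $f^*$, of which $\la^{-1}$ is the unique one of absolute value $<1$. The paper phrases this directly via the product formula rather than by contradiction, and your explicit justification of $|N_{\Q(\la)/\Q}(\la)|=1$ is a welcome clarification of a step the paper leaves implicit.
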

\proof[Proof of Lemma \ref{lemlaoneconj}]
Since $\la_1$ is an algebraic integer with $|\la|>1$, by product formula, there is $\sigma\in \Gal(\overline{\Q}/\Q)$ such that 
$|\sigma(\la_1)|<1.$ Because $\sigma(\la_1)$ is an eigenvalue of $f^*$ and $\la_1^{-1}$ is the unique eigenvalue of $f^*$ with norm $<1,$
we have $\sigma(\la_1)=\la_1^{-1}$.
\endproof 

Then $f^*\sigma(\theta^*)=\sigma(f^*\theta^*)=\sigma(\la)\sigma(\theta^*)=\la^{-1}\sigma(\theta^*).$
So there is $c>0$ such that $\theta_*=c\sigma(\theta^*).$ After replacing $\theta_*$ by $c^{-1}\theta_*$, we may assume that 
$\sigma(\theta^*)=\theta_*.$

\begin{cor}\label{corcurudst}For every curve $C$ of $X$, $(\theta^*\cdot C)=0$ if and only if $(\theta_*\cdot C)=0.$
\end{cor}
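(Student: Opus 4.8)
The plan is to deduce this directly from the Galois symmetry between $\theta^*$ and $\theta_*$ established just above. Recall that $\theta^*$ spans the eigenspace of $f^*$ for the eigenvalue $\la$, and that this eigenspace is one‑dimensional: indeed $N^1(X)_{\R}=\R\theta^*\oplus\R\theta_*\oplus H$, the eigenvalue on $\R\theta^*$ is $\la$, the one on $\R\theta_*$ is $\la^{-1}$, and all eigenvalues of $f^*|_H$ have absolute value $1$, so $\la$ is strictly larger in absolute value than every other eigenvalue of $f^*$ and is therefore a simple root of its characteristic polynomial. Consequently the $\la$‑eigenspace is defined over $\Q(\la)\subseteq\overline{\Q}$; this is precisely why, as in the discussion preceding the corollary, one may normalise $\theta^*$ and $\theta_*$ so that $\theta^*,\theta_*\in N^1(X)_{\overline{\Q}}$ and $\sigma(\theta^*)=\theta_*$.

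The key observation is then that the intersection pairing on $N^1(X)_{\overline{\Q}}$ is the $\overline{\Q}$‑linear extension of the $\Z$‑valued pairing on $N^1(X)$, hence it is $\Q$‑rational and in particular equivariant for the action of $\Gal(\overline{\Q}/\Q)$: one has $\sigma\bigl((\alpha\cdot\beta)\bigr)=(\sigma(\alpha)\cdot\sigma(\beta))$ for all $\alpha,\beta\in N^1(X)_{\overline{\Q}}$. For a curve $C\subseteq X$ the numerical class $[C]$ lies in $N^1(X)\subseteq N^1(X)_{\overline{\Q}}$, so $\sigma([C])=[C]$, and $(\theta^*\cdot C)\in\overline{\Q}$. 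Applying $\sigma$ gives
$$\sigma\bigl((\theta^*\cdot C)\bigr)=(\sigma(\theta^*)\cdot\sigma(C))=(\theta_*\cdot C).$$

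Finally, since $\sigma$ is a field automorphism of $\overline{\Q}$, it maps $0$ to $0$ and maps no nonzero element to $0$; hence $(\theta^*\cdot C)=0$ if and only if $(\theta_*\cdot C)=0$, which is the assertion. (If one prefers to keep the scaling from the earlier step in which $\theta_*$ was replaced by $c^{-1}\theta_*$, the same computation with $\theta_*=c\,\sigma(\theta^*)$, $c>0$, yields $(\theta_*\cdot C)=c\,\sigma((\theta^*\cdot C))$, with the identical conclusion.) There is essentially no real obstacle in this argument; the only points needing a word of justification are that the relevant intersection numbers genuinely lie in $\overline{\Q}$ and that the intersection form is $\Q$‑rational, both of which are immediate once $\theta^*$ and $\theta_*$ have been chosen to be algebraic classes, as was done above.
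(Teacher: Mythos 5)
Your proof is correct and is essentially the same as the paper's: the paper phrases the argument via the hyperplane $P=\{\alpha\in N^1(X)_{\C}\mid (\alpha\cdot C)=0\}$ being defined over $\Q$ and hence $\sigma$-stable, while you apply $\sigma$ directly to the intersection number, but both rest on exactly the same two facts, namely the $\Gal(\overline{\Q}/\Q)$-equivariance of the rational intersection pairing and the relation $\sigma(\theta^*)=\theta_*$ (up to the positive constant $c$). Your extra remark justifying that $\theta^*$ may be taken in $N^1(X)_{\overline{\Q}}$ (simplicity of the eigenvalue $\la$) is a welcome clarification of a point the paper leaves implicit.
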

\proof[Proof of Corollary \ref{corcurudst}]
The subspace $P:=\{\alpha\in N^1(X)_{\C}|\,\, (\alpha\cdot C)=0\}$ is a hyperplane of $N^1(X)_{\C}$ defined over $\Q.$
We have $\sigma(P)=P.$
Embed $N^1(X)_{\R}$ in $N^1(X)_{\C}$. Then $\theta^*\in P$ if and only if $\theta_*=\sigma(\theta^*)\in \sigma(P)=P.$
\endproof

\subsection{Canonical height}
In this section, we assume  
\begin{points}
\item either $\bk=\overline{\Q}$;
\item or there is an algebraically closed subfield $K\subseteq \bk$, a curve $B$ over $K$, such that $X$ and $f$ are defined over $K(B)$ and $\bk=\overline{K(B)}.$
\end{points}

Let $A$ be an ample divisor of $X$, denote by $h_A$ a Weil height on $X(\bk)$ associated to $A$ with $h_A\geq 1.$
Pick $\R$-divisors $D^{*}$ and $D_*$ with numerical classes $\theta^*, \theta_*$.
By \cite[Theorem 5]{Kawaguchi2016} and \cite{Kawaguchi2020} in characteristic zero and Proposition \ref{procanheight} in positive characteristic, for every $y\in X(\bk)$, the limits 
$$h^+(y):=\lim_{n\to \infty}h_{D^*}(f^n(y))/\la^n$$
and 
$$h^-(y):=\lim_{n\to \infty}h_{D_*}(f^{-n}(y))/\la^n$$
exist, do not depend on the choice of $D^{*}$, $D_*$, $h_{D^*}$ and $h_{D_*},$ and satisfies the following properties:
\begin{points}
\item $h^+=h_{D^*}+O(h_A^{1/2})$, $h^-=h_{D_*}+O(h_A^{1/2})$;
\item $h^+\circ f=\la h^+$ and $h^-\circ f=\la^{-1}h^-.$
\end{points}

\begin{lem}\label{lemcomheigonc}Let $C$ be an irreducible curve of $X$ such that $(C\cdot \theta_*)>0.$ Then for every $M\geq 0$, there is $M'\geq 0$, such that 
$$\{y\in C(\bk)|\,\, h^-(y)\leq M\}\subseteq \{y\in C(\bk)|\,\, h_A(y)\leq M'\}.$$
\end{lem}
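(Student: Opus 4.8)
The plan is to exploit the fact that $h^- = h_{D_*} + O(h_A^{1/2})$ and that $D_*$ has numerical class $\theta_*$, which meets $C$ positively. First I would restrict everything to $C$. Let $\nu\colon \widetilde C \to C \subseteq X$ be the normalization; then $\nu^* D_*$ is an $\mathbb{R}$-divisor on the smooth projective curve $\widetilde C$ of positive degree $(C\cdot \theta_*)>0$, and $\nu^* A$ is ample of some positive degree $a := (C\cdot A)>0$. By functoriality of Weil heights (Lemma~\ref{lemsingwilldef} applied to $\nu$, or the elementary theory of heights on curves), for $y\in C(\bk)$ we have $h_{D_*}(y) = h_{\nu^* D_*}(\tilde y) + O(1)$ and $h_A(y) = h_{\nu^* A}(\tilde y) + O(1)$ where $\tilde y\in\widetilde C(\bk)$ is a preimage of $y$. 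So it suffices to prove the analogous statement on $\widetilde C$: a lower bound for $h_{\nu^*D_*}$ in terms of $h_{\nu^*A}$ on a smooth projective curve.

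The key point is that on a smooth projective curve $\widetilde C$, any two height functions associated to $\mathbb{R}$-divisors of positive degree are comparable up to $O(h^{1/2})$: more precisely, if $E$ and $E'$ are $\mathbb{R}$-divisors of positive degrees $e, e'$ on $\widetilde C$, then $h_E = \tfrac{e}{e'} h_{E'} + O((h_{E'})^{1/2})$. This is a standard consequence of the theory of heights on curves (it is, for instance, implicit in \cite[Proposition B.3]{Matsuzawa2020a} combined with the fact that $N^1(\widetilde C)_{\mathbb R} = \mathbb R$, so every class is a scalar multiple of an ample one). Applying this with $E = \nu^* D_*$ and $E' = \nu^* A$ gives constants $c_1>0$, $c_2\ge 0$ such that $h_{\nu^* D_*}(\tilde y) \ge c_1 h_{\nu^* A}(\tilde y) - c_2 (h_{\nu^* A}(\tilde y))^{1/2}$ for all $\tilde y$. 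Combining with the comparison $h^- = h_{D_*} + O(h_A^{1/2})$ and pulling back along $\nu$, we obtain, for all $y\in C(\bk)$,
\[
 h^-(y) \ \ge\ c_1 h_A(y) - c_3 (h_A(y))^{1/2} - c_4
\]
for suitable constants $c_3\ge 0$, $c_4\ge 0$ depending only on $C$. The right-hand side tends to $+\infty$ as $h_A(y)\to\infty$; hence the set $\{y\in C(\bk) : h^-(y)\le M\}$ forces $h_A(y)$ to lie below some bound $M'$ depending only on $M$ and $C$, which is exactly the assertion.

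The main obstacle is making the comparison inequality $h_{\nu^* D_*} \gtrsim h_{\nu^* A}$ on the curve fully rigorous with the right error term, especially since $D_*$ is only an $\mathbb{R}$-divisor (its numerical class $\theta_*$ need not be rational). One handles this by choosing an actual $\mathbb{Q}$-divisor $E_0$ on $\widetilde C$ of positive degree with $E_0 \le \nu^* D_*$ in the sense that $\deg(\nu^* D_*) - \deg E_0 \ge 0$ is small, or more simply: write $\nu^* D_* \equiv \lambda_0 (\nu^* A) + (\text{class of degree }0)$ with $\lambda_0 = (C\cdot\theta_*)/a > 0$, and invoke \cite[Proposition B.3]{Matsuzawa2020a} (degree-zero classes contribute $O(h^{1/2})$) to absorb the degree-zero part into the error. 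Everything else is routine functoriality of heights; the positivity $(C\cdot\theta_*)>0$ is precisely what guarantees $\lambda_0>0$ and hence that the leading term genuinely dominates.
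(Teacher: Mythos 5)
Your proposal is correct and follows essentially the same route as the paper: both arguments reduce to the lower bound $h^-(y)\ge c_1h_A(y)-c_3h_A(y)^{1/2}-c_4$ on $C$, obtained by combining $h^-=h_{D_*}+O(h_A^{1/2})$ with the fact that $(C\cdot\theta_*)>0$ forces $h_{D_*}$ to dominate a positive multiple of $h_A$ on $C$ up to lower-order terms, and then conclude since the right-hand side tends to $+\infty$ with $h_A(y)$. The paper phrases the comparison slightly more crudely (choosing $a$ with $a(D_*\cdot C)>(A\cdot C)$ so that $ah_{D_*}+b\ge h_A$ on $C$) rather than passing explicitly to the normalization, but this is the same standard height argument on curves.
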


\proof[Proof of Lemma \ref{lemcomheigonc}]
There is $d>0$, such that $$h^{-}\geq h_{D_*}-dh_A^{1/2}.$$
Pick $a>0$ such that $a(D_*\cdot C)>(A\cdot C).$
Then there is $b>0$ such that for every $y\in C$, 
$$ah_{D^*}(y)+b\geq h_{A}(y).$$
So for every $y\in C,$
$$h^{-}(y)\geq a^{-1}(h_{A}(y)-b)-dh_A^{1/2}(y).$$
If $h^-(y)\leq M$, we get 
$$M\geq a^{-1}(h_{A}(y)-b)-dh_A^{1/2}(y)=(a^{-1}h_A^{1/2}(y)-d)h_A^{1/2}(y)-a^{-1}b.$$
This implies that 
$$h_A^{1/2}(y)\leq \max\{ad, aM+b+ad\}=aM+b+ad.$$
Then we get $h_A(y)\leq (aM+b+ad)^2.$
\endproof

\subsection{The case $(C\cdot \theta_*)>0$}
\begin{lem}\label{lemcthqezerfinite}Let $C$ be an irreducible curve of $X$ such that $(C\cdot \theta_*)>0.$
For every $x\in X(\bk)$, $O_f(x)\cap C$ is finite.
\end{lem}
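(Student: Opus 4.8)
The plan is to use the canonical height $h^-$ as a decreasing measurement along the forward orbit of $x$, combined with Lemma \ref{lemcomheigonc} which converts a bound on $h^-$ into a bound on the Weil height $h_A$. Since $h^-\circ f=\la^{-1}h^-$ with $\la>1$, the values $h^-(f^n(x))=\la^{-n}h^-(x)$ form a bounded (indeed, decreasing to $0$ if $h^-(x)\ge 0$, or constant zero) sequence; in all cases $h^-(f^n(x))\le M$ for some fixed $M\geq 0$ and all $n\geq 0$ — here one should note that $h^-$ is nonnegative since $\theta_*$ is nef, or at least bounded below on the whole orbit because $h^-=h_{D_*}+O(h_A^{1/2})$ and the orbit heights are controlled. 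So every point of $O_f(x)\cap C$ lies in the set $\{y\in C(\bk)\mid h^-(y)\leq M\}$.

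Next I would invoke Lemma \ref{lemcomheigonc}: since $(C\cdot\theta_*)>0$, there is $M'\geq 0$ with $\{y\in C(\bk)\mid h^-(y)\le M\}\subseteq\{y\in C(\bk)\mid h_A(y)\le M'\}$. Thus $O_f(x)\cap C$ is contained in the set of points of the curve $C$ of bounded Weil height. By the Northcott property for the function field $K(B)$ (respectively for $\overline{\Q}$), together with control on the degree $[K(B)(y):K(B)]$ of the points involved — the points $f^n(x)$ all lie in a fixed finitely generated extension of $K(B)$, hence have bounded degree — the set of points of $C$ of bounded height and bounded degree is finite. Therefore $O_f(x)\cap C$ is finite.

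The main obstacle I anticipate is the Northcott-type finiteness step in the function field setting: over $\bk=\overline{K(B)}$ there is no Northcott property without a degree bound, since points of arbitrarily large degree can have bounded height (as Lemma \ref{lemlocheig} itself exploits). One must therefore argue that all of $x, f(x), f^2(x),\dots$ are defined over a single finitely generated subextension $L$ of $\bk/K(B)$ — which holds because $X$, $f$ and $x$ are defined over some such $L$ — and then apply Northcott over the function field $L$, where bounded height does force finiteness. A secondary, more minor point is checking that $h^-$ is bounded above on the forward orbit; this is immediate from $h^-\circ f=\la^{-1}h^-$ once one fixes the single value $h^-(x)$, so no uniformity over $x$ is needed, and the lemma is stated for a fixed $x$.
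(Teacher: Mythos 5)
Your first two steps (using $h^-(f^i(x))=\la^{-i}h^-(x)\le \max\{h^-(x),0\}$ and then Lemma \ref{lemcomheigonc} to bound $h_A$ on $O_f(x)\cap C$) match the paper. But the finiteness step has a genuine gap exactly at the point you flagged as the "main obstacle," and your proposed fix does not work. Over $\bk=\overline{K(B)}$ with $K$ algebraically closed (which is the situation here after the standard reduction to finite transcendence degree over the prime field), the Northcott property fails even for points of bounded degree: for example, if $C$ is defined over the constant field $K$, then $C(K)\subseteq C(K(B))$ is an infinite set of points of degree $1$ and height $0$. Restricting to a finitely generated subextension $L$ does not help, because the constant field $K$ is still infinite. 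So "bounded height and bounded degree $\Rightarrow$ finite" is simply false in this setting, and your argument only closes the case $\bk=\overline{\Q}$ (where the paper indeed concludes by Northcott) and the trivial case $\bk=\overline{\F_p}$ (finite orbit).

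What the paper does instead is an induction on $\trd_{\F}\bk$. The bounded-height points are not finite but form a \emph{bounded family}: the sections $s$ of $\sX\to B$ with $h_A(y_s)\le M$ are parametrized by a quasi-projective $K$-variety $\sM_M$ (via the Hilbert polynomial computation). One then picks finitely many points $b_1,\dots,b_L\in U(K)$ so that the product of evaluation maps $e_L=\prod e_{b_i}:\sM_M\to\prod X_{b_i}$ is quasi-finite (\cite[Lemma 8.1]{Xie2014}), applies the induction hypothesis to each specialized triple $(X_{b_i},f_{b_i},C_{b_i},x_{b_i})$ over the smaller field $K$ to see that each image $e_{b_i}(\{f^j(x)\mid j\in I\})\subseteq O_{f_{b_i}}(x_{b_i})\cap C_{b_i}$ is finite, and concludes by quasi-finiteness of $e_L$. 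This specialization-plus-induction mechanism is the essential content missing from your proposal.
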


\proof[Proof of Lemma \ref{lemcthqezerfinite}]
Let $\F$ be the minimal algebraically closed subfield of $\bk$. So $\F=\overline{\Q}$ if $\Char\, \bk=0$ and $\F=\overline{\F_p}$ when $\Char\, \bk=p>0.$
There is an algebraically closed subfield $\bk'$ of $\bk$ with $\trd_{\F}\bk'<\infty$ such that $X,f,C$ and $x$ are defined over $\bk'$.  After replacing $\bk$ by $\bk'$, we may assume $\trd_{\F}\bk<\infty$. 
Now we prove Lemma \ref{lemcthqezerfinite} by induction on $\trd_{\F}\bk.$

\medskip

When $\bk=\overline{\F_p}$ for some prime $p>0,$  $O_f(x)$ is finite.
Then Lemma \ref{lemcthqezerfinite} holds.

\medskip

Assume $\bk=\overline{\Q}.$ Set $I:=\{i\geq 0|\,\, f^i(x)\in C\}.$
For every $i\geq I$, $h^-(f^i(x))=\la^{-i}h^-(x)\leq h^-(x).$ By Lemma \ref{lemcomheigonc}, there is $M>0$ such that 
$h_A(f^i(x))<M$ for every $i\in I.$ We conclude the proof by the Northcott property.

\medskip

Now we may assume that $\trd_{\F}\bk\geq 1.$ There is an algebraically closed subfield $K\subseteq \bk$, a smooth irreducible projective curve $B$ over $K$, such that $X$, $f$, $C$ and $x$ are defined over $K(B)$ and $\bk=\overline{K(B)}.$

There is a projective morphism $\pi: \sX\to B$ whose geometric generic fiber is $X$. The automorphism $f$ extends to a birational self-map $f_B: \sX\dashrightarrow \sX$ over $B$.
Let $A_B$ be an ample divisor on $\sB$.  Let $C_B$ be the Zariski closure of $C$ in $\sX$. 
Let $A$ be the restriction of $A_B$ on the generic fiber $X.$ There is a nonempty open subset $U$ of $B$, such that $\pi$ is smooth above $U$ and $f_B|_{\pi^{-1}(U)}$ is an automorphism. Assume that $(A\cdot \theta_*)=1.$

For every $b\in B$, let $X_b:=\pi^{-1}(b)$, $C_b:=C\cap X_b$, $f_b$ be the restriction of $f$ to $X_b$ and $A_b$ be the restriction of $L_B$ to $X_b.$ After shrinking $U$, we may assume that $C_b$ is irreducible for every $b\in U.$  For every $n\geq 0$ and $b\in U$, we have $((f^n)^*A\cdot A)=((f_b^n)^*A_b\cdot A_b)$. So $\la_1(f_b)=\la_1(f)=\la>1.$
For $b\in U$, set $$\theta_{*,b}':=\lim_{n\to \infty}((f_b^{-n})^*A_b\cdot A_b)/\la^n.$$ The discussion in Section \ref{subsectionnumgeo} shows that $\R\theta_{*,b}'$ the eigenspace of $(f_b^{-1})^*$ in $N^1(X_b)$  for eigenvalue $\la$. Set $\theta_{*,b}:=\theta_{*,b}'/(\theta_{*,b}'\cdot A).$
We have $$(\theta_{*,b}\cdot C_b)=(\theta_{*}\cdot C)>0.$$

\medskip

Set $I:=\{i\geq 0|\,\, f^i(x)\in C\}.$ For every $i\geq I$, $h^-(f^i(x))=\la^{-i}h^-(x)\leq h^-(x).$ By Lemma \ref{lemcomheigonc}, there is $M>0$ such that 
$h_A(f^i(x))<M$ for every $i\in I.$

For every point $y\in X$ defined over $K(B)$, its closure $s_y$ in $\sX$ is a section of $\pi.$ 
We may assume that for every $y\in X(K(B))$, $h_A(y)=(A_B\cdot s_y)$.
Also, for every section $s$ of $\pi$, its generic fiber defines a point $y_s\in X(K(B)).$
For every $y\in X(K(B))$, $\pi$ induces an isomorphism from $s_y$ to the curve
B. Consider the Hilbert polynomial
$$\chi(s^*_y\sO(nA_B))=1-g(B)+n(s_y\cdot A_B)=1-g(B)+nh_A(y).$$
So there is a quasi-projective $K$-variety $\sM_M$ that parameterizes the sections $s$ of $\pi$ with $h_A(y_s)\leq M$ (see \cite{debarre}).
For every $b\in U$, denote by $e_b: \sM_M\to X_b$ the morphism $s\mapsto s(b).$
Pick a sequence $b_i, i\geq 1$ of distinct points in $U(K)$. 
For  $s_1, s_2\in \sM_M$,  $s_1=s_2$ if and only if $e_{b_i}(s_1)=e_{b_i}(s_2)$ for every $i\geq 1.$
For $l\geq 1$, set 
$$e_l:=\prod_{i=1}^le_{b_i}: \sM_M\to \prod_{i=1}^lX_{b_i}.$$
By \cite[Lemma 8.1]{Xie2014}, there is $L\geq 1$ such that $e_L$ is quasi-finite.
For $j\in I$, $f^j(x)$ defines a point $s_{f^j(x)}\in \sM_M.$
The induction hypothesis shows that, for $i=1,\dots,L$,
$$e_{b_i}(\{f^j(x)|\,\, j\in I\})=\{f_{b_i}^j(x_{b_i})|\,\, j\in I\}\subseteq O_{f_{b_i}}(x_{b_i})\cap C_{b_i}$$ is finite.
So $e_L(\{f^j(x)|\,\, j\in I\})$ is finite. Since $e_L$ is quasi-finite, $O_f(x)\cap C=\{f^j(x)|\,\, j\in I\}$ is finite.
\endproof

\subsection{Conclusion}
Let $x\in X(\bk)$ and $C$ be an irreducible curve of $X$. If $(C\cdot \theta_*)>0,$ we conclude the proof by Lemma \ref{lemcthqezerfinite}.

Now assume that $(C\cdot \theta_*)=0.$ 
Let $B(f)$ be the set of curves $C'$ with $(C'\cdot \theta_*)=0.$
By Corollary \ref{corcurudst}, $C'\in B(f)$ if and only if  $(C'\cdot \theta^*)=0$, if and only if 
$(C'\cdot (\theta^*+\theta_*))=0$.  Since $\theta^*+\theta_*$ is big and nef, $B(f)$ is finite. 
Since $f^*\theta^*=\la_1\theta^*,$ $C'\in B(f)$ if and only if $f(C')\in B(f).$ So every curve in $B(f)$ is periodic. 
Since $C\in B(f),$ $C$ is periodic. 
$\square$

\section{Zariski dense orbit conjecture}
Let $X$ be a variety over $\bk$ of dimension $d_X.$
 Let $f: X\dashrightarrow X$ be a dominant rational self-map.

\subsection{Existence of well-defined orbits}\label{subsectionexweldef}
In characteristic $0$, the following result is well know. In positive characteristic, the proof is similar.
\begin{pro}\label{proxfnonempty}For every Zariski dense open subset $U$ of $X$, there is $x\in U(\bk)$ whose $f$-orbit is well defined and contained in $U.$
\end{pro}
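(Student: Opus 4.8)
The plan is to construct the point by a limiting process: for each finite length $N$ one produces a non-empty Zariski open set of ``good'' starting points — those whose first $N$ iterates are defined and lie in $U$ — and then one extracts a $\bk$-point common to all of them.

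First I would set $U_0:=U$ and, recursively, $U_{n+1}:=U_0\cap g^{-1}(U_n)$, where $g:=f|_{X\setminus I(f)}\colon X\setminus I(f)\to X$ is the morphism defined off the indeterminacy locus. A trivial induction shows each $U_n$ is a non-empty Zariski open subset of $X$: openness is clear, and non-emptiness holds because $g$ is dominant, so $g(X\setminus I(f))$ meets the non-empty open $U_n$, and $U_0$ is dense, so the intersection survives. Unwinding the recursion,
$$U_n=\{x\in X\mid f^i\text{ is defined at }x\text{ and }f^i(x)\in U\text{ for }0\le i\le n\}=\bigcap_{i=0}^n(f^i)^{-1}(U),$$
so it suffices to exhibit a $\bk$-point lying in $U_n$ for every $n\ge 0$.

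Next I would pass to the limit using the constructible topology on $|X|$, which is compact: each $U_n$ is constructible, hence clopen, and the family $\{U_n\}_{n\ge 0}$ is non-empty and nested, so $S:=\bigcap_{n\ge 0}U_n\neq\emptyset$. Every point of $S$ lies outside $I(f^i)$ for all $i$ and has all its iterates in $U$, i.e. has a well-defined $f$-orbit contained in $U$; moreover $f(S)\subseteq S$ and $f$ restricts to a continuous self-map of $S$.

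The hard part will be to upgrade the scheme-theoretic point just produced to a \emph{closed} point, because $S$ always contains the generic point of $X$ (and usually the generic points of various positive-dimensional subvarieties as well), so compactness alone does not finish the argument. When $\bk$ is uncountable there is nothing more to do: $X(\bk)$ is not a countable union of proper closed subsets, whereas each $X(\bk)\setminus U_n(\bk)$ lies in the proper closed set $X\setminus U_n$, so some $\bk$-point lies in all the $U_n$. For a possibly countable $\bk$ — the range in which the characteristic is irrelevant and the proof is forced to be the same as in characteristic zero — I would reduce to the uncountable case by base-changing to an uncountable algebraically closed $\Omega\supseteq\bk$, obtaining $x_\Omega\in X(\Omega)$ with well-defined orbit in $U_\Omega$, and then descending: since each $U_n$ is defined over $\bk$, the generic point of the $\bk$-subvariety $V\subseteq X$ which is the Zariski closure of the image of $x_\Omega$ already has a well-defined $f$-orbit in $U$; running a Noetherian descent on the orbit-closures $\overline{f^n(V\setminus I(f^n))}$ — which are equidimensional once their dimension stabilizes — one shrinks $V$ (passing to an $f$-invariant subvariety when the chain cycles, and recursing on dimension), the only case in which this fails to lower the dimension being $V=X$. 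I expect essentially all the difficulty to be concentrated in this last point — manufacturing a closed point over a countable ground field from the always-available scheme-theoretic one; everything preceding it is formal and independent of the characteristic, which is why the positive-characteristic proof runs exactly as the classical one.
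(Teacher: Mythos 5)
Your argument up to and including the uncountable case is fine: the sets $U_n=\bigcap_{i=0}^n(f^i)^{-1}(U)$ are nonempty dense opens, and when $\bk$ is uncountable $X(\bk)$ is not a countable union of proper closed subsets, so a $\bk$-point survives in all of them. But the whole content of the proposition is concentrated in the countable case ($\bk=\overline{\F_p}$ or $\overline{\Q}$ and their finitely generated extensions), and there your argument has a genuine gap. The compactness step only produces non-closed points (indeed $\bigcap_n U_n$ always contains the generic point $\eta$, so no compactness is needed to see it is nonempty), and your descent does not convert such a point into a closed one. Concretely: after base change and taking $V$ to be the closure of the image of $x_\Omega$, the forward images $\overline{f^n(V\setminus I(f^n))}$ have non-increasing dimension, but once the dimension stabilizes the resulting equidimensional subvarieties need not be eventually periodic (think of a non-periodic curve under a surface automorphism), so there is in general no $f$-invariant subvariety to pass to; and in the case $V=X$, which you yourself single out, you are facing exactly the original problem over a countable field with no further tool. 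Over $\bk=\overline{\F_p}$ the statement is not formal at all: every closed point lies over a finite field, and the existence of a point with well-defined (and $U$-contained) orbit requires a counting argument over finite fields.

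The paper's route is different and avoids this trap: it reduces to $\trd_{\F}\bk<\infty$ ($\F$ the prime algebraically closed subfield), quotes the nontrivial base cases --- \cite[Proposition~3.22]{Xie2019} over $\overline{\Q}$ and Fakhruddin's \cite[Proposition~5.5]{fa} (or \cite[Proposition~6.2]{Xie2015}) over $\overline{\F_p}$ --- and then induces on transcendence degree by spreading out over a base $B$, finding a good closed point $x_b$ in a general special fiber, and cutting $X_B$ by general hyperplanes through $x_b$ to produce an irreducible multisection $S$ whose generic point is a closed point of the generic fiber specializing to $x_b$; since being outside $I(f^n)$ and inside $U$ are open conditions preserved under generization, that point has a well-defined orbit in $U$. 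To repair your proof you would need to supply the countable base case by one of these (or an equivalent) non-formal arguments.
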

\proof[Proof of Proposition \ref{proxfnonempty}]
After replacing $X, f$ by $U, f|_U$, we may assume that $X=U.$
So we only need to show that $X_f(\bk)\neq \emptyset.$

Let $\F$ be the smallest algebraically closed subfield of $\bk.$
So $\F=\overline{\Q}$ or $\overline{\F_p}.$
We may replace $\bk$ by an algebraically closed subfield $\bk'$ of $\bk$ with $\trd_{\F}\bk'<\infty$ such that $X,f$ are defined over $\bk'$.
Now assume that $\trd_{\F}\bk<\infty.$
If $\Char\,\bk=0$, we conclude the proof by \cite[Proposition 3.22]{Xie2019}. Now assume that $\Char\,\bk=p>0.$ 

The case $\bk=\overline{\F_p}$ is essentially proved in \cite[Proposition 5.5]{fa}. On may also see \cite[Proposition 6.2]{Xie2015}. In \cite[Proposition 6.2]{Xie2015}, $f$ is assumed to be birational, but its proof works for arbitrary dominant rational self-map.

Now assume that $\trd_{\F}\bk\geq 1.$ 
There is a subfield $L$ of $K$ which is finitely generated over $\bk$ such that $X,f$ are defined over $L.$
Let $B$ be a projective and normal variety over $\F$ such that $L=\bk.$ 
There is a $B$-scheme $\pi: X_B\to B$ and a rational self-map $f_B: X_B\dashrightarrow X_B$ over $B$ such that the geometric generic fiber of $(X_B,f_B)$ is $(X,f).$
Let $b$ be a general point of $B(\F)$ and denote by $(X_b,f_b)$ the fiber of $(X_B,f_B)$ above $b$. Then $V_b:=X_b\setminus  I(f_B)$ and $f_b$ is dominant.
Applying the case over $\overline{\F_p}$ to $(V_b, f_b|_{V_b})$, there is $x_b\in (V_b)_{f_b|_{V_b}}(\F).$
Cutting by general hyperplanes of $X_B$, there is an irreducible subvariety $S$ of $X_B$ of dimension $\dim S=\dim B$ passing through $b$ with $\pi(S)=B.$ 
Then the generic point of $S$ defines a point $x\in X_f(\bk)$, which concludes the proof. 
\endproof

\subsection{Tautological upper bound}

The following lemmas was proved in characteristic zero, but their proof works in any characteristic.
\begin{lemma}\cite[Lemma 2.15]{Jia2021}\label{l_inv_fun_field}
Let $K$ be an algebraically closed field extension of $\bk$.
Then $\bk(X)^{f}= \bk$ if and only if,  $K(X_{K})^{f_{K}}= K$.
\end{lemma}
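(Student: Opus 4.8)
The statement to prove is Lemma~\ref{l_inv_fun_field}: for an algebraically closed field extension $K/\bk$ and a dominant rational self-map $f\colon X\dashrightarrow X$, one has $\bk(X)^f=\bk$ if and only if $K(X_K)^{f_K}=K$. One direction is essentially formal: if $\bk(X)^f\neq\bk$, pick $g\in\bk(X)^f\setminus\bk$; then $g$ defines an $f_K$-invariant rational function on $X_K$ which is non-constant, since $\bk(X)\hookrightarrow K(X_K)$ (the extension $X_K\to X$ is dominant, indeed faithfully flat) and this embedding identifies $\bk(X)\cap K = \bk$ inside $K(X_K)$; so $g\notin K$ and $K(X_K)^{f_K}\neq K$. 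The real content is the converse.

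\textbf{Main step (the converse).} Suppose $\bk(X)^f=\bk$; I want $K(X_K)^{f_K}=K$. The plan is a specialization/spreading-out argument. Take $h\in K(X_K)^{f_K}$; I must show $h\in K$. Since $h$ involves only finitely many elements of $K$, there is a subfield $L\subseteq K$ finitely generated over $\bk$ with $h\in L(X_L)^{f_L}$ and with $L$ the function field of some irreducible $\bk$-variety $B$ (affine, say, after shrinking), so $X_L$ spreads out to a family $\mathcal{X}\to B$ with $\mathcal{X}_b\cong X$ for general $b\in B(\bbk)$ (base changing $\bk$ to its algebraic closure is harmless as $\bk$ is already algebraically closed), and $h$ spreads out to a rational function $\tilde h$ on $\mathcal{X}$ that is $\tilde f$-invariant for the relative self-map $\tilde f\colon\mathcal{X}\dashrightarrow\mathcal{X}$ over $B$. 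By generic flatness and openness, for $b$ in a dense open $U\subseteq B$ the fiber $\tilde h_b$ is a well-defined rational function on $X$ invariant under $f$ (the fiber of $\tilde f$), hence by hypothesis $\tilde h_b\in\bk$ for all $b\in U(\bbk)$.

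\textbf{Concluding the converse.} The fact that $\tilde h$ restricts to a constant on a Zariski-dense set of fibers forces $\tilde h$ to be pulled back from the base: more precisely, consider $\tilde h$ as a rational map $\mathcal{X}\dashrightarrow \mathbb{P}^1$; the locus where it agrees with a function factoring through $B$ is constructible, and the above shows the generic fiber lies in this locus, so $\tilde h\in L(B) = L$, i.e. $h\in L\subseteq K$. To make ``constant on each fiber $\Rightarrow$ factors through the base'' rigorous one can argue at the level of function fields: $\tilde h\in L(\mathcal X)=L(X_L)$, and for generic $b$ the image of $\tilde h$ in $\bbk(X)$ under the specialization homomorphism (defined since $\tilde h$ is regular at the generic point of $\mathcal X_b$) lies in $\bbk$; a polynomial-identity argument on the numerator/denominator of $\tilde h$ with respect to a transcendence basis of $L(X_L)/L$ then shows $\tilde h$ itself lies in $L$. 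I expect this last ``fiberwise constant implies constant along the base'' bookkeeping to be the only slightly delicate point; everything else (spreading out, generic flatness, compatibility of taking invariants with base change to $\bbk$) is standard. Since the excerpt says the proof of \cite[Lemma 2.15]{Jia2021} ``works in any characteristic'', I would simply cite that argument and only remark that no characteristic hypothesis enters the spreading-out.
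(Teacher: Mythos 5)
Your argument is correct. Note that the paper itself gives no proof of this lemma: it only cites \cite[Lemma 2.15]{Jia2021} with the remark that the proof there is characteristic-free, so there is nothing in this text to compare against line by line; your write-up is in the same spirit as the standard argument. The easy direction is fine as you state it: since $\bk$ is algebraically closed, $\bk(X)\otimes_\bk K$ is a domain and $\bk(X)\cap K=\bk$ inside its fraction field, so a nonconstant invariant function stays nonconstant after base change. For the converse, the one step you flag as delicate (``constant on a dense set of fibers implies pulled back from $B$'') closes cleanly without any polynomial bookkeeping: consider the rational map $(\pi_B,\tilde h)\colon X\times B\dashrightarrow B\times\P^1$ and let $Z$ be the closure of its image. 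Since $\tilde h$ is constant on the general fiber of $\pi_B$, the general fiber of $Z\to B$ is a point, so $\dim Z=\dim B$ and $L(Z)/L$ is algebraic; as $\tilde h\in L(Z)\subseteq L(X_L)$ and $L$ is algebraically closed in $L(X_L)$ (because $X_L$ is geometrically integral over $L$, $X$ being integral over the algebraically closed field $\bk$), one gets $\tilde h\in L\subseteq K$. The only hypotheses used are standard spreading-out facts, so indeed no characteristic assumption enters.
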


\begin{lem}\cite[Lemma 2.1]{Xie2019}\label{leminvratfunite}Let $X'$ be an irreducible variety over $\bk$, $f': X'\dashrightarrow X'$ be a rational self-map and $\pi: X'\dashrightarrow X$ be a generically finite dominant rational map satisfying $f\circ \pi=\pi\circ f',$ then we have the following properties.
\begin{points}
\item If there exists $m\geq 1$, and $H\in \bk(X)^{f^m}\setminus \bk$, then there exists $G\in \bk(X)^f\setminus \bk$.
\item  There exists $H'\in \bk(X')^{f'}\setminus \bk$, if and only if there exists $H\in \bk(X)^f\setminus \bk$.
\end{points}
\end{lem}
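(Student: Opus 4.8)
The plan is to handle the two parts with the two standard field-theoretic devices: symmetric functions for (i) and minimal polynomials for (ii). Throughout I would use that, since $f$ and $\pi$ are dominant, and hence so is $f'$ (because $\pi\circ f'=f\circ\pi$ is dominant while $\pi$ is generically finite, so $\dim\overline{f'(X')}=\dim X'$), they induce injective field homomorphisms $f^*\colon\bk(X)\to\bk(X)$, $\pi^*\colon\bk(X)\hookrightarrow\bk(X')$ and $f'^*\colon\bk(X')\to\bk(X')$ with $f'^*\circ\pi^*=\pi^*\circ f^*$; in particular $f'^*$ carries the subfield $\pi^*\bk(X)$ into itself. I would also use repeatedly that $\bk$, being algebraically closed, is algebraically closed inside $\bk(X)$ and inside $\bk(X')$, so any element of these fields that is algebraic over $\bk$ is a constant.

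For (i): put $H_j:=(f^*)^jH$ for $j=0,\dots,m-1$. Since $(f^*)^mH=H$, the operator $f^*$ cyclically permutes $\{H_0,\dots,H_{m-1}\}$, so every elementary symmetric function $e_j:=e_j(H_0,\dots,H_{m-1})$ satisfies $f^*e_j=e_j$, i.e. $e_j\in\bk(X)^f$. If all $e_j$ lay in $\bk$, then $H$ would be a root of $T^m-e_1T^{m-1}+\cdots+(-1)^me_m$, a polynomial with constant coefficients, hence algebraic over $\bk$, forcing $H\in\bk$, contrary to hypothesis. So some $G:=e_j\notin\bk$ does the job.

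For (ii), the implication ``$\Leftarrow$'' is immediate: given $H\in\bk(X)^f\setminus\bk$, the function $H':=\pi^*H$ satisfies $f'^*H'=f'^*\pi^*H=\pi^*f^*H=\pi^*H=H'$, and it is non-constant because $\pi^*$ is injective and sends $\bk$ into $\bk$. For ``$\Rightarrow$'', since $\pi$ is generically finite the extension $\bk(X')/\pi^*\bk(X)$ is finite, so the given $H'$ has a monic minimal polynomial $P(T)=T^e+a_{e-1}T^{e-1}+\cdots+a_0$ over $\pi^*\bk(X)$. Applying the ring homomorphism $f'^*$ to the relation $P(H')=0$ and using $f'^*H'=H'$ shows that the polynomial $f'^{*}P$, obtained by applying $f'^*$ to each coefficient, is again monic of degree $e$, has coefficients in $\pi^*\bk(X)$, and vanishes at $H'$; by minimality $f'^*P=P$, so $f'^*a_i=a_i$ for every $i$. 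Writing $a_i=\pi^*b_i$ and using injectivity of $\pi^*$ together with $f'^*\pi^*=\pi^*f^*$ gives $f^*b_i=b_i$, i.e. $b_i\in\bk(X)^f$. If some $b_i\notin\bk$ we take $H:=b_i$; otherwise all $a_i\in\bk$, so $H'$ is algebraic over $\bk$ and hence constant, contradicting the choice of $H'$.

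I do not expect a genuine obstacle: both arguments are short and formal. The only points that need a little care are the bookkeeping making $f^*$, $\pi^*$, $f'^*$ and their compatibility legitimate (resting on dominance of $f$, $\pi$ and $f'$), and the two appeals to $\bk$ being algebraically closed in the relevant function fields, which is exactly what converts ``algebraic over $\bk$'' into ``constant'' in both parts.
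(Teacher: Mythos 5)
Your proof is correct, and it follows essentially the same route as the cited source \cite[Lemma 2.1]{Xie2019} (the paper itself only quotes the lemma without reproving it): elementary symmetric functions of the $f^*$-orbit of $H$ for (i), and $f'^*$-invariance of the coefficients of the minimal polynomial of $H'$ over $\pi^*\bk(X)$ for (ii), with the algebraic closedness of $\bk$ in the function fields converting ``algebraic over $\bk$'' into ``constant'' in both parts.
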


They show that the assumption $\bk(X)^f=\bk$ is stable under base change, under positive iterate and under semiconjugacy by generaically finite dominant morphism. 
As an example of realization problems, the author asked the following question in \cite[Section 1.6]{Xie2019}.
\begin{que}\label{querealzdo} What is the minimal transcendence degree $R(\bk, X, f)$ of an algebraically closed 
field extension $K$ of $\bk$ such that $(X_K,f_K)$  satisfies the  ZDO property?
\end{que}

Proposition \ref{protautzdo} gives a tautological upper bound of $R(\bk, X, f)$.

\proof[Proof of Proposition \ref{protautzdo}]
We may assume that $\bk(X)^f= \bk$. By Lemma \ref{l_inv_fun_field}, $K(X_{K})^{f_{K}}= K$.

An irreducible $f_K$-invariant variety $V$ is said to be maximal, if the only irreducible $f_K$-invariant variety $W$ containing $V$ is $X_K.$
We note that $I(f_K)=I(f)\otimes_{\bk}K$ is defined over $\bk.$

\begin{lem}\label{leminvaroverk}Let $V$ be an irreducible $f_K$-invariant variety. Then $V$ is over defined over $\bk.$
\end{lem}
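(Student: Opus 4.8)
The plan is to control the field of definition of $V$. We may assume that $X$ is projective and that $V$ is maximal and proper (otherwise there is nothing to prove). Let $L\subseteq K$ be the field of definition of $V$; since $\bk$ is algebraically closed, $L$ is a finitely generated extension of $\bk$ and $V=V_L\times_LK$ for a unique closed subvariety $V_L\subseteq X_L$. If $L=\bk$ we are done, so assume $m:=\trd_{\bk}L\geq1$, fix a quasi-projective $\bk$-variety $T$ with $\bk(T)=L$ and $\dim T=m$, and aim for a contradiction with the standing hypothesis $\bk(X)^f=\bk$.

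First I would spread the family out. Since $f$ and $I(f)$ are defined over $\bk$ (recall $I(f_K)=I(f)\times_{\bk}K$) and $V_L$ is $f_L$-invariant (invariance descends along the faithfully flat map $X_K\to X_L$), after shrinking $T$ we obtain an integral closed subscheme $\sV\subseteq X\times_{\bk}T$, flat over $T$ with geometrically integral fibres, whose generic fibre is $V_L$, and such that for every $u\in T(\bk)$ the fibre $\sV_u\subseteq X$ is an $f$-invariant subvariety of dimension $\dim V<\dim X$; here $\sV$ is irreducible of dimension $m+\dim V$, and $p\colon\sV\to X$ and $q\colon\sV\to T$ are dominant. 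Set $W:=\overline{p(\sV)}$; this is an irreducible $f$-invariant subvariety of $X$ (each $\sV_u$ is $f$-invariant and contained in $W$, so $f(W)\subseteq W$ and $W\not\subseteq I(f)$). From $\sV\subseteq W\times_{\bk}T$ we get $V_L\subseteq W_L$, hence $V\subseteq W_K$, and $W_K$ is an irreducible $f_K$-invariant subvariety of $X_K$ containing $V$. By maximality of $V$, either $W_K=V$, whence $V_L=W_L$ and so $V=W\times_{\bk}K$ is defined over $\bk$ and we are done; or $W_K=X_K$, i.e.\ $W=X$. Assume the latter.

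Now I would manufacture a nonconstant $f$-invariant rational function. As $p$ is dominant, $m\geq\dim X-\dim V\geq1$; pick a general subvariety $T'\subseteq T$ with $\dim T'=\dim X-\dim V$ and set $\sV':=\sV\times_TT'$. Then $p'\colon\sV'\to X$ is still dominant, hence generically finite; the fibres $\sV'_u$ ($u\in T'$) remain $f$-invariant; and the family $\{\sV'_u\}_{u\in T'}$ is still nonconstant (a constant family of proper subvarieties could not sweep out $X$). Let $d\geq1$ be the generic cardinality of $\{u\in T':x\in\sV_u\}$ and define the rational map $\Phi\colon X\dashrightarrow\mathrm{Sym}^d(T')$ by $x\mapsto[\{u\in T':x\in\sV_u\}]$. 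It is nonconstant (otherwise the $d$ members it names would contain every general point of $X$, hence equal $X$). It is $f$-invariant: $\{u:x\in\sV_u\}\subseteq\{u:f(x)\in\sV_u\}$ by $f$-invariance of the fibres, and for general $x$ both sets have cardinality $d$ (because $f$ is dominant, so $f(x)$ is again general), so they coincide; thus $\Phi\circ f=\Phi$ on a dense open. Hence for any nonconstant rational function $\psi$ on $\mathrm{Sym}^d(T')$ the function $\psi\circ\Phi\in\bk(X)$ is nonconstant and $f$-invariant, contradicting $\bk(X)^f=\bk$. Therefore $L=\bk$ and $V$ is defined over $\bk$.

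The crux is the last step: constructing $\Phi$. One must slice $T$ down to dimension $\dim X-\dim V$ so that the incidence correspondence $\{(x,u):x\in\sV_u\}$ becomes generically finite over $X$, while checking that the sliced family still dominates $X$ and still varies, and one must justify the equality $\{u:x\in\sV_u\}=\{u:f(x)\in\sV_u\}$ for general $x$. The remaining ingredients --- descent of $f$-invariance of $V_L$, the spreading-out, and the behaviour of fields of definition under base change --- are routine, and maximality of $V$ enters only to exclude the case $W\subsetneq X$, in which one would otherwise have to recurse into $W$ without any control on $\bk(W)^{f|_W}$.
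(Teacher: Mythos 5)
Your argument is correct and follows the same skeleton as the paper's: spread $V$ out over a $\bk$-variety with function field $L$, observe that the closure of the image of the total space in $X$ is $f$-invariant so that maximality forces it to be either $V$ itself (done) or all of $X$, and in the latter case cut the family down to a generically finite correspondence over $X$ and extract a nonconstant $f$-invariant rational function. The two proofs diverge only in the last step. The paper cuts the \emph{total space} $V_B\subseteq X\times B$ by $s$ divisors pulled back from the (projective) base $B$, obtaining an $f_B^l$-invariant subvariety $W$ that is generically finite and dominant over $X$ with $\dim\pi_2(W)>0$; the nonconstant invariant function $\pi_2^*(\cdot)$ on $W$ is then pushed down to $X$ by citing Lemma \ref{leminvratfunite}. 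You instead slice the \emph{base} $T$ and build the invariant function directly via $\Phi\colon X\dashrightarrow \mathrm{Sym}^d(T')$ --- which is precisely an inline proof of the relevant direction of Lemma \ref{leminvratfunite} (and has the small advantage of not introducing an iterate $f^l$). The one point you should patch, since the whole setting is characteristic $p$: if $\sV'\to X$ is generically \emph{inseparable}, the set-theoretic fibre map $\Phi$ is a priori only defined over a purely inseparable extension of $\bk(X)$ (e.g.\ for $\mathrm{Spec}\,\bk(X)[t]/(t^p-g)$ one gets $x\mapsto g(x)^{1/p}$), so $\psi\circ\Phi$ need not lie in $\bk(X)$. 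This is repaired either by using the norm/characteristic-polynomial construction on the scheme-theoretic fibre, or simply by replacing $\psi\circ\Phi$ with a suitable $p^e$-th power, which is still nonconstant and $f$-invariant; this is exactly the issue the paper's appeal to Lemma \ref{leminvratfunite} absorbs. The remaining general-position claims (that the sliced family still dominates $X$, and that $\{u: x\in\sV_u\}=\{u: f(x)\in\sV_u\}$ for general $x$) are justified at the same level of detail as the corresponding steps in the paper.
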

\proof[Proof of Lemma \ref{leminvaroverk}] 
Set $r:=\dim V<d_X.$
There is a subfield $L$ of $K$ which is finitely generated over $\bk$ such that $V$ is defined over $L.$
Let $B$ be a projective and normal variety over $\bk$ such that $L=\bk(B).$ 


Then there is a subvariety $V_B$ of $X\times B$ such that $\pi_2(V_B)=B$ where $\pi_2: X\times B\to B$ is the projection to the second coordinate and 
$V=V_{\eta}\times_{L}K$ where $\eta$ is the generic point of $B$ and $V_{\eta}$ is the generic fiber of $\pi_2|_{V_B}.$
We have $\dim V_B=\dim B+r.$
Since $V$ is $f_K$-invariant, $V_B\subseteq X\times B$ is $f_B:=f\times \id$ invariant.


Consider $\pi_1: X\times B\to B$ the projection to the first coordinate. 
It is clear that $\pi_1(V)$ is irreducible and $f$-invariant. 
Since $V\subseteq \pi_1(V)_K$ and $V$ is maximal, we get either $V_B=\pi_1^{-1}(\pi_1(V_B))$ or $\pi_2(V_B)=X.$
In the former case $V=\pi_1(V_B)_{K}$ is defined over $\bk.$ Now we assume that 
$\pi_2(V_B)=X.$ Then $\dim B= \dim V_B-r\geq d_X-r\geq 1$ and $\bk\subsetneq \pi_2^*(\bk(B))\subseteq \bk(V_B)^{f_B|_{V_B}}.$

If $\dim V_B=d_X$, we conclude the proof by Lemma \ref{leminvratfunite}. 
Now assume that $\dim V_B\geq d_X+1.$ 
So a general fiber of $\pi_1|_{V_B}$ has dimension $s\geq 1$. We have $\dim B=d_X+s-r>s.$
Let $H_1, \dots, H_2$ be very ample divisors on $B$ which are general in their linear system. Then the intersection of $\pi_2^{-1}(H_i), i=1,\dots,s$ and a general fiber of $\pi_1|_{V_B}$ is of dimension $0$ and $W':=V_B\cap H_1\dots \cap H_s$ is $f_B$-invariant.
Because $\pi_1(W')=X,$ there is an irreducible component $W$ of $W'$ with $\pi_1(W)=X$ and there is $l\geq 1$ such that $W$ is $f^l$-invariant. 
Because $d_X=\dim W$ and $\dim\pi_2(W)=d_X-r>0.$ So $\bk\subsetneq\bk(W)^{(f_B|_W)^l},$ which is a contradiction by Lemma \ref{leminvratfunite}.
\endproof

We only need to treat the case $\trd_{\bk}K=d.$ So we may assume that $K=\overline{\bk(X)}.$  The diagonal $\Delta$ of $X\times X$ defines a point $o$ in $X_K(K).$ Here we view $X_K$ as the geometric generic fiber of the second projection $\pi_2: X\times X\to X.$
Because $\pi_1(\Delta)=X$ where $\pi_1: X\times X\to X$ is the first projection, $O_{f_K}(o)$ is well defined and for every $n\geq 0$, $f_K^n(o)$ is not contained in any proper subvariety of $X_K$ defined over $\bk.$ An irreducible component $W$ of $\overline{O_{f_K}(o)}$ of maximal dimension is $f_K$-periodic and does not contained in any proper subvariety of $X_K$ defined over $\bk.$ By Lemma \ref{leminvaroverk}, $W=X_K$ which concludes the proof.
\endproof

In fact, with a slight modification, we prove a stronger result related to the strong form of the Zariski dense orbit conjecture \cite[Conjecture 1.4]{Xie2019}.

\begin{pro}\label{protautstzdo}Assume that $\bk(X)^f=\bk.$
Let $K$ be an algebraically closed field extension of $\bk$ with $\trd_{\bk}K\geq \dim X$.
Then for every nonempty Zariski open subset $U$ of $X_K$, there is a point $x\in U(K)$ whose $f_K$-orbit is well defined and contained in $U.$
\end{pro}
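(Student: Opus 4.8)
The plan is to imitate the proof of Proposition~\ref{protautzdo}, but to replace the diagonal point of $X\times X$ by a tautological point attached to a \emph{generic enough} dominant rational map, so that its orbit is not only Zariski dense but also avoids the closed complement $Z:=X_K\setminus U$. Set $d:=\dim X$ (the case $d=0$ being trivial), let $L_0\subseteq K$ be a finitely generated subextension of $K/\bk$ over which $Z$ is defined, and put $e_0:=\trd_\bk L_0$. First I would carry out a routine reduction of $K$: enlarging $L_0$ inside $K$ to a finitely generated $L$ with $\trd_\bk L=\max\{e_0,d\}$ (possible since $\trd_\bk K\geq d$ and $\trd_\bk K\geq e_0$) and observing that a point over $\overline L\subseteq K$ with the desired properties pushes forward to one over $K$, I may assume $K=\overline{\bk(Y)}$ for a normal $\bk$-variety $Y$ with $\dim Y=\max\{e_0,d\}\geq d$, with $L_0\subseteq\bk(Y)$, and admitting a dominant rational map $\rho\colon Y\dashrightarrow X$; the last point holds because $\bk(Y)$, of transcendence degree $\geq d$ over $\bk$, can be arranged to contain a $\bk$-embedded copy of $\bk(X)$.

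Next I would set up the tautological points. Viewing $X_K$ as the geometric generic fibre of the second projection $\mathrm{pr}_2\colon X\times_\bk Y\to Y$, every dominant rational map $g\colon Y\dashrightarrow X$ yields a point $o_g\in X_K(K)$ lying over the generic point $\eta_X$ of $X$ under $\mathrm{pr}_1$; exactly as in Proposition~\ref{protautzdo}, the $f_K$-orbit of $o_g$ is well defined, $o_g$ lies in no proper subvariety of $X_K$ defined over $\bk$, and $f_K^n(o_g)=o_{f^n\circ g}$. Writing $\widetilde Z\subseteq X\times_\bk Y$ for the Zariski closure of $Z$ and $\widetilde Z^{(n)}:=(f^n\times\mathrm{id}_Y)^{-1}(\widetilde Z)$ — a proper closed subset, since its generic fibre over $Y$ is $(f^n_K)^{-1}(Z)\subsetneq X_K$ — one checks that $f_K^n(o_g)\in Z$ if and only if the graph closure $\overline{\Gamma_g}$ is contained in $\widetilde Z^{(n)}$. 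Thus the problem reduces to producing a dominant $g\colon Y\dashrightarrow X$ with $\overline{\Gamma_g}\not\subseteq\widetilde Z^{(n)}$ for every $n\geq 0$. Granting such a $g$, the endgame is word for word that of Proposition~\ref{protautzdo}: a maximal-dimensional irreducible component $V$ of $\overline{O_{f_K}(o_g)}$ is $f_K$-periodic, hence $(f_K^m)$-invariant for some $m$, hence defined over $\bk$ by Lemma~\ref{leminvaroverk} (using $\bk(X)^{f^m}=\bk$), hence equal to $X_K$ because it meets infinitely many $f_K^n(o_g)$; so $o_g$ has Zariski dense orbit contained in $U$, as required.

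The hard part — the only place this goes beyond Proposition~\ref{protautzdo} — is producing $g$. When $\widetilde Z$ does not dominate $X$ under $\mathrm{pr}_1$ (in particular when $e_0=0$, which recovers Proposition~\ref{protautzdo}) there is nothing to do: then no $\widetilde Z^{(n)}$ dominates $X$, whereas $\overline{\Gamma_g}$ does for every dominant $g$. In general I would argue by genericity inside the geometrically irreducible $\bk$-variety $\mathcal G_\delta$ parametrizing graphs of dominant rational maps $Y\dashrightarrow X$ of bounded degree $\leq\delta$ (with $\delta\gg 0$), where for each $n$ the locus $\{g:\overline{\Gamma_g}\subseteq\widetilde Z^{(n)}\}$ is a proper closed subset (properness because $\widetilde Z^{(n)}\subsetneq X\times_\bk Y$ and $\dim Y\geq d$ leaves room to move the graph off it), and I must find a member of $\mathcal G_\delta$ outside this countable family. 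The genuine obstacle is that $\bk$ may be countable — e.g.\ $\overline{\F_p}$, where the Zariski dense orbit conjecture notoriously fails — so one cannot merely invoke that an irreducible variety is not a countable union of proper subvarieties; instead one must extract a quantitative constraint from the dynamics (the inclusion $\overline{\Gamma_g}\subseteq\widetilde Z^{(n)}$ forces the graph of $f^n\circ g$ into the fixed subvariety $\widetilde Z$ and so becomes increasingly restrictive as $n\to\infty$), confining the ``bad'' maps $g$ to a bounded family that cannot exhaust $\mathcal G_\delta$. Making this confinement precise and uniform in $n$ is the crux of the proof.
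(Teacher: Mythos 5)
Your proposal correctly reproduces the framework of Proposition~\ref{protautzdo} (tautological point over the generic point of a base $Y$, periodic component of the orbit closure defined over $\bk$ by Lemma~\ref{leminvaroverk}, hence equal to $X_K$), but it stops exactly at the decisive step: you must produce a single dominant $g\colon Y\dashrightarrow X$ over $\bk$ avoiding the countably many proper closed conditions $\overline{\Gamma_g}\subseteq\widetilde Z^{(n)}$, you correctly observe that genericity fails when $\bk$ is countable (e.g.\ $\overline{\F_p}$), and you then only gesture at a ``quantitative confinement'' of the bad maps without proving it. As written this is a genuine gap, not a routine verification: nothing in the proposal shows that the bad locus in $\mathcal G_\delta$ is a bounded family, and it is not clear such a uniform bound exists.

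The paper closes this gap by a different and much softer mechanism, which you should compare with your plan. Instead of choosing the parametrizing map generically, one chooses the tautological point $x\in X_K(K)$ so that it \emph{specializes} to a well-chosen $\bk$-point of a special fibre. Concretely (with $K=\overline{\bk(X)}$ and $X_K$ the geometric generic fibre of $\pi_2\colon X\times X\to X$): for a general $b$ in the base, the fibre $Z_b$ of the closure of $Z=X_K\setminus U$ is a proper closed subset of $X\times\{b\}\cong X$, and Proposition~\ref{proxfnonempty} — whose proof already absorbs the countable-field difficulty once and for all, via the $\overline{\F_p}$ case — supplies $x_b\in X\setminus Z_b$ with well-defined $f$-orbit contained in $X\setminus Z_b$. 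Cutting $X\times X$ by general hyperplanes through $(x_b,b)$ yields an irreducible $S$ of dimension $\dim X$ with $\pi_1(S)=\pi_2(S)=X$ passing through $(x_b,b)$; its generic point is a $K$-point $x$ of $X_K$ specializing to $(x_b,b)$. Since ``$f_K^n(x)\in Z$'' and ``$x\in I(f_K^n)$'' are closed conditions inherited by the specialization $(f^n(x_b),b)$, the orbit of $x$ is automatically well defined and contained in $U$; density then follows as in your endgame. In short: the missing idea is to replace the intersection of countably many generic conditions on the parametrizing map by one application of Proposition~\ref{proxfnonempty} on a fibre plus a specialization argument. Your approach could in principle be salvaged along these lines, but not by the bounded-family argument you sketch.
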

\proof[Proof of Proposition \ref{protautstzdo}]Keep the notation in the proof of  Proposition \ref{protautzdo}.
Pick a general point $b\in X(\bk).$ Then $U_b:=X\setminus\overline{(X_K\cap U)}$ is not empty. 
By Proposition \ref{proxfnonempty}, there is $x_b\in U_b$, whose $f$ orbit is well defined and contained in $U_b.$
Cutting by general hyperplanes of $X\times X$, there is an irreducible subvariety $S$ of $X\times X$ of dimension $\dim S=\dim X$ passing through $(x_b,b)$ such that 
$\pi_1(S)=X$ and $\pi_2(S)=X.$ The generic point of $S$ defines a point in $x\in X_K(K).$ Then the $f_K$-orbit of $x$ is well defined and contained in $U.$
After replacing $o$ by $x$, the argument in the last paragraph of the proof of Proposition \ref{protautzdo} shows that $O_{f_K}(x)$ is Zariski dense in $X_K.$
\endproof

\subsection{Height argument}
The aim of this section is to prove Theorem \ref{thmendononpre}, \ref{thmzdosuraut} and \ref{thmzdola}.

Assume that $\Char\, \bk=p>0$ and $\trd_{\overline{F_p}} \bk\geq 1.$ Let $f: X\to X$ be a dominant  endomorphism of a projective variety.
There is a algebraically closed subfield $K$ of $\bk$ such that $\trd_K\bk=1.$
So there is smooth projective curve $B$ over $K$, such that $f,X$ are defined over $K(B).$  The Weil heights appeared in the section are associated to the function field $K(B).$

\proof[Proof of Theorem \ref{thmendononpre}] By Corollary \ref{cordenseal}, there exists a point $x\in U(\bk)$ with $\alpha_f(x)=\la_1(f)>1$ and $O_f(x)\subseteq U$. So $x$ has infinite orbit.
\endproof

\proof[Proof of Theorem \ref{thmzdola}]The proof of \cite[Proposition 8.6]{Jia2021} shows that 
for every $f$-periodic proper subvariety $V$ of period $m\geq 1,$ $\la_1(f^m|_V)<\la_1(f^m).$
By Propositon \ref{proextarhitd}, there exists a point $x\in X(\bk)$ with $\alpha_f(x)=\la_1(f)>1$.
Let $W$ be an irreducible component of $\overline{O_f(x)}$ of maximal dimension. There is $m\geq 1$ with $f^m(W)=W.$
There is $l\geq 0$ such that $f^l(x)\in W$. 

If $W\neq X$,
by Proposition \ref{proupboundarth} and Lemma \ref{lem_subvar}, we get 
$$\la_1(f)^m=\overline{\alpha}_{f}(x)^m=\overline{\alpha}_{f^m}(f^l(x))\leq \la_1(f^m|_W)<\la_1(f)^m.$$
We get a contradiction. So $W=X$, which concludes the proof.
\endproof

The following theorem was proved in \cite[Theorem 1]{Invarianthypersurfaces}, but when $f$ is an automorphism,  its proof work in arbitrary characteristic.
\begin{thm}\label{thminvhyper}If $f$ is an automorphism and it preserves infinitely many (not necessarily irreducible) hyperplanes, then $\bk(X)^f\neq \bk.$
\end{thm}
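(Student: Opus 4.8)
The plan is to turn the hypothesis into a statement of finite-dimensional linear algebra. Fix a projective embedding $X\hookrightarrow\P^N$, set $L:=\sO_X(1)$ and $V:=H^0(X,L)$, so that every hyperplane section of $X$ — here ``hyperplane'' means $H\cap X$ for a hyperplane $H\subseteq\P^N$, which need not be irreducible — is a member of $|L|=\P(V)$; concretely $H=\{\ell=0\}$ gives the divisor $\{s=0\}$ with $s=\ell|_X\in V$, and distinct hyperplane sections give distinct points $[s]\in\P(V)$. Since $f$ is an automorphism fixing at least one hyperplane section $D=\{s_0=0\}$ (so $f^*D=D$ as effective Cartier divisors), we get $f^*L=\sO_X(f^*D)=\sO_X(D)\cong L$. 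Fix an isomorphism $\psi\colon f^*L\xrightarrow{\ \sim\ }L$; it is unique up to a factor in $\bk^*$, because the global units of the integral projective variety $X$ are the nonzero constants. Then $\phi:=\psi\circ f^*\colon V\to V$ is a linear automorphism, canonical up to scalar, inducing the natural action of $f$ on $|L|=\P(V)$. For nonzero $t\in V$, since $\psi$ is an isomorphism of line bundles the zero subscheme of $\phi(t)=\psi(f^*t)$ is $f^{-1}\{t=0\}$; hence the hyperplane section $\{t=0\}$ is $f$-invariant if and only if $\phi(t)\in\bk^*t$, i.e. if and only if $t$ is an eigenvector of $\phi$.

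\textbf{Main step.} Now invoke the hypothesis: $f$ preserves infinitely many pairwise distinct hyperplane sections, so $\phi$ has infinitely many distinct eigenlines in $V$. As $\dim_\bk V<\infty$, $\phi$ has only finitely many eigenvalues, and a one-dimensional eigenspace accounts for a single eigenline; so by the pigeonhole principle some eigenvalue $\la$ has eigenspace $E_\la\subseteq V$ with $\dim_\bk E_\la\geq 2$. Choose $s,t\in E_\la$ linearly independent and put $g:=s/t\in\bk(X)$. Then $g\notin\bk$ (else $s\in\bk t$), and $g$ is $f$-invariant:
$$f^*g=\frac{f^*s}{f^*t}=\frac{\psi(f^*s)}{\psi(f^*t)}=\frac{\phi(s)}{\phi(t)}=\frac{\la s}{\la t}=\frac{s}{t}=g,$$
the second equality because an isomorphism of line bundles preserves ratios of sections. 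Hence $g\in\bk(X)^f\setminus\bk$, so $\bk(X)^f\neq\bk$. The whole argument uses only line bundles on a projective variety and linear algebra over the algebraically closed field $\bk$, and nowhere uses $\Char\bk=0$; this is exactly why the automorphism case of \cite[Theorem 1]{Invarianthypersurfaces} carries over to positive characteristic.

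\textbf{Where the (mild) difficulty is.} In this automorphism/hyperplane setting I do not expect a genuine obstacle — the content is entirely in the elementary setup above. The one point that requires care is the reading of ``$f$ preserves a hyperplane section'': I used $f^*D=D$ for $D=H\cap X\in|L|$ (equivalently, $f$ fixes $D$ as a subscheme), which is the form of the hypothesis that linearises immediately. If one only assumes that $f$ fixes the reduced support $(H\cap X)_{\mathrm{red}}$, these reduced divisors need not lie in a single linear system, but they have bounded degree $\leq\deg X$; one then argues Jouanolou-style — infinitely many $f$-invariant divisors of bounded degree lie in a positive-dimensional family of invariant divisors inside some linear system $|M|$ (after possibly replacing $f$ by an iterate, which is harmless by Lemma \ref{leminvratfunite}(i)) — reducing to a fixed $|M|$, and then applies the eigenvector argument to $\phi_M$ on $H^0(X,M)$. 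In the typical situation ($X$ smooth, the invariant hyperplanes transverse to it) every $D_i$ is already a reduced member of $|L|$ and this reduction is vacuous.
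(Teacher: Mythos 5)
The paper does not actually supply a proof of this statement: it is quoted from \cite[Theorem 1]{Invarianthypersurfaces} with the remark that the proof there works in arbitrary characteristic when $f$ is an automorphism. So the comparison is really between your argument and the cited one. Your core step --- $f^*L\cong L$, the induced linear automorphism $\phi$ of $H^0(X,L)$ well defined up to scalar, infinitely many invariant members of $|L|$ forcing an eigenspace of dimension $\geq 2$, and the ratio of two independent eigenvectors giving a nonconstant element of $\bk(X)^f$ --- is correct, complete, and visibly characteristic-free. Under the literal reading of ``hyperplane'' as a member of $|\sO_X(1)|$ preserved as a divisor, your proof works and is essentially the classical linear-system argument.

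The gap is in the scope. The cited theorem (and, I am fairly confident, the intended statement --- note the citation key) is about invariant \emph{hypersurfaces}, not hyperplane sections of a fixed embedding; invariant hypersurfaces of an automorphism can have unbounded degree and in any case need not lie in a single linear system, so the eigenvector argument does not apply to them directly. Your closing paragraph is where the real content of the theorem sits, and the reduction sketched there is not correct as stated: boundedness of $(D\cdot L^{\dim X-1})$ confines the classes $[D]$ to finitely many points of $N^1(X)$, but \emph{not} to finitely many linear systems, since a numerical class can contain a positive-dimensional family of linear equivalence classes (translates of an ample divisor on an abelian variety all have the same degree and pairwise distinct linear systems, and a translation automorphism preserves infinitely many of them). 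Handling infinitely many invariant divisors spread over infinitely many classes requires analysing the $f^*$-action on $\Pic(X)$ modulo $\Pic^{\tau}$, extracting multiplicative relations $f^*g=c\,g$ from principal combinations $\sum n_iD_i$, and then eliminating the constants $c$ --- this is the Jouanolou--Cantat--Bell--Moosa--Topaz part of the proof, and it is absent. In short: correct for the statement as literally worded, but missing the main step for the statement the paper is actually citing.
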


\begin{pro}\label{proautbounded}Let $X$ be a projective variety over $\bk$ of dimension $d_X$. Let $L$ be an ample line bundle on $X$.
Let $f: X\to X$ be an automorphism such that $((f^n)^*L\cdot L^{d_X-1}),n\geq 0$ is bounded. Then $(X,f)$ satisfies the ZDO property.
\end{pro}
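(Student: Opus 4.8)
The plan is to show that the numerical boundedness hypothesis forces a suitable iterate of $f$ to lie in a positive–dimensional algebraic subgroup of $\Aut(X)$, and then to deduce ZDO from soft facts about algebraic group actions. We may assume $d_X\geq 1$, and, since a dense $O_{f^N}(x)$ is a dense $O_f(x)$ and $\bk(X)^{f^N}\neq\bk$ implies $\bk(X)^f\neq\bk$ by Lemma~\ref{leminvratfunite}(i), it suffices to prove ZDO after replacing $f$ by any iterate.

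\emph{Step 1: reducing to a numerically trivial action on $N^1(X)$.} Since $L$ is ample, the functional $\alpha\mapsto(\alpha\cdot L^{d_X-1})$ is strictly positive on $\Nef(X)\setminus\{0\}$ (Hodge index theorem), so $\{\alpha\in\Nef(X):(\alpha\cdot L^{d_X-1})\leq C\}$ is compact for each $C$. As each $(f^n)^*[L]$ is a nef class in the lattice $N^1(X)$ and $((f^n)^*L\cdot L^{d_X-1})$ is bounded, the set $\{(f^n)^*[L]:n\geq 0\}$ is a bounded subset of a lattice, hence finite; thus $(f^m)^*[L]=[L]$ for some $m\geq 1$. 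Then $(f^m)^*$ fixes $[L]$, preserves $N^1(X)$, and preserves the form $q_L(\alpha,\beta):=(\alpha\cdot\beta\cdot L^{d_X-2})$ (it preserves intersection numbers and $(f^m)^*L\equiv L$). By the Hodge index theorem $q_L$ has signature $(1,\rho-1)$ with $q_L([L])=(L^{d_X})>0$, so $[L]^{\perp_{q_L}}$ is negative definite and $(f^m)^*$–stable; hence $(f^m)^*$ restricted to the lattice $[L]^{\perp}\cap N^1(X)$ lies in the finite orthogonal group of a negative definite lattice, so $(f^m)^*$ has finite order on $N^1(X)$. Replacing $f$ by an iterate, we may assume $f^*=\id$ on $N^1(X)$.

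\emph{Step 2: landing in an algebraic group.} If $f$ has finite order, then $\bk(X)^f$ is the function field of $X/\langle f\rangle$, of dimension $d_X\geq1$, so $\bk(X)^f\neq\bk$ and we are done. So assume $f$ has infinite order. The subgroup $\Aut^{\tau}(X)\subseteq\Aut(X)$ of automorphisms acting trivially on $N^1(X)$ is a group scheme of finite type over $\bk$ (classically, $\Aut^{\tau}(X)/\Aut^{0}(X)$ is finite). Since $f\in\Aut^{\tau}(X)$ has infinite order, its Zariski closure $\overline{\langle f\rangle}$ is infinite and closed in a finite type group scheme, hence positive dimensional; its identity component lies in $\Aut^{0}(X)$ and a power $f^{N}$ lies in it. After replacing $f$ by $f^{N}$, set $H:=\overline{\langle f\rangle}\subseteq\Aut^{0}(X)$: a positive–dimensional commutative algebraic group in which $\langle f\rangle$ is Zariski dense, and in which $\{f^n:n\geq0\}$ is already dense (the Zariski closure of a sub-semigroup of an algebraic group is a subgroup).

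\emph{Step 3: the dichotomy on $H$–orbits, and where the work is.} Consider the $H$–action on $X$. If some orbit $H\cdot x$ is Zariski dense in $X$, then, the orbit map $h\mapsto h\cdot x$ being continuous and $\{f^n:n\geq0\}$ dense in $H$, we get $\overline{O_f(x)}\supseteq\overline{H\cdot x}=X$, so $O_f(x)$ is dense and ZDO (indeed SZDO) holds. Otherwise the general $H$–orbit has dimension $r<d_X$, so by Rosenlicht's theorem $\bk(X)^{H}$ has transcendence degree $d_X-r\geq 1$ over $\bk$; since $f\in H$ we have $\bk(X)^{H}\subseteq\bk(X)^{f}$, whence $\bk(X)^f\neq\bk$ and ZDO holds. (In this second case one could instead note that the closures of general $H$–orbits, cut down by general hyperplane sections, give infinitely many $f$–invariant hypersurfaces and invoke Theorem~\ref{thminvhyper}.) The substantive part of the argument is Step 2 — extracting from the purely numerical hypothesis an honest positive–dimensional algebraic group containing an iterate of $f$ — which relies on the Hodge index theorem together with the finiteness of $\Aut^{\tau}(X)/\Aut^{0}(X)$; once $f$ sits inside an algebraic group, Step 3 is essentially formal.
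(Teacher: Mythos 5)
Your argument is correct, but it reaches the key intermediate statement --- that some iterate of $f$ generates a positive-dimensional connected commutative algebraic subgroup $G$ of $\Aut(X)$ --- by a different route than the paper, and it also finishes differently. The paper never passes through $N^1(X)$: it uses the bound on $((f^n)^*L\cdot L^{d_X-1})$ directly (via log-concavity of the intermediate degrees $((f^n)^*L^k\cdot L^{d_X-k})$, which in particular also bounds the degrees of the inverse iterates) to bound the graphs $\Gamma_{f^n}\subseteq X\times X$, so that $\{f^n\}$ lies in a quasi-compact subscheme of $\Aut(X)$ and its Zariski closure $G$ is an algebraic group of finite type; this is the quickest way to get $G$ from the hypothesis. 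You instead first show, via the quadratic form $q_L$ and the Hodge index theorem, that $f^*$ has finite order on $N^1(X)$, and then invoke finiteness of the component group of the polarized automorphism group scheme. This works, but two points deserve a citation: what you need (and what is classical in arbitrary characteristic, by Matsusaka) is finiteness of $\Aut_{[L]}(X)/\Aut^0(X)$ rather than of $\Aut^{\tau}(X)/\Aut^0(X)$, and the signature statement for $q_L$ on a possibly singular projective variety in positive characteristic requires a reduction (e.g.\ by an alteration). For the endgame, you invoke Rosenlicht's theorem to get $\bk(X)^{H}\subseteq\bk(X)^{f}$ of positive transcendence degree when no $G$-orbit is dense; the paper instead argues by hand, taking the closure $Y$ of the image of $G\times X\to X\times X$, $(g,x)\mapsto (g(x),x)$, cutting by general hyperplanes pulled back from the second factor to produce an $(f\times\id)$-invariant $T$ dominating $X$ generically finitely with $\dim\pi_2(T)\geq 1$, and then applying Lemma~\ref{leminvratfunite}. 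Your use of Rosenlicht is cleaner; the paper's cut-and-project argument is self-contained and is essentially an explicit construction of the rational quotient. Both proofs agree on the first half of the dichotomy: a dense $G$-orbit of a closed point is contained in the closure of its $f$-orbit because $\{f^n\}$ is dense in $G$.
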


\proof[Proof of Proposition \ref{proautbounded}]
Let $\Aut(X)$ be the scheme of automorphisms of $X.$
Every connected component of $\Aut(X)$ is a variety over $\bk$, but $\Aut(X)$ may have infinite connected component.

Because $((f^n)^*L\cdot L^{d_X-1}),n\geq 0$ is bounded, the Zariski closure $G$ of $f^n, n\geq 0$ in $\Aut(X)$ is a commutative algebraic group.
After replacing $f$ by a suitable iterate, we may assume that $G$ is irreducible. We may assume that $f$ is of infinite order. So $\dim G\geq 1.$

For every $x\in X(\bk)$, $\overline{O_f(x)}=\overline{G.x}$.
Consider the morphism $\Phi: G\times X\to X\times X$ sending $(g,x)$ to $(g(x),x)$.
Denote by $\pi_i: X\times X\to X$ the $i$-th projection. 
Consider the $G$-action on $X\times X$ by $g.(x,y)=(g(x),y).$
Set $F:=f\times \id: X\times X\to X\times X.$ 

The image $W$ of $\Phi$ is a constructible subset of $X\times X$.
Let $Y$ be the Zariski closure of $W$ in $X\times X$. It is irreducible and $F$-invariant.
Let $\Delta$ be the diagonal of $X\times X.$ Then $\Delta\subseteq W\subseteq Y.$
So $\pi_1(Y)=\pi_2(Y)=X.$ Because $\dim G\geq 1$ and the action of $G$ on $X$ is faithful, $Y\neq \Delta.$
So the general fiber of $\pi_2|_{Y}$ has dimension $r\geq 1.$ If $r=\dim X$, then for a general $x\in X(\bk)$, $\overline{O_f(x)}=\overline{G.x}=X$ which concludes the proof.
Now assume that $r<\dim X.$

We have $\dim Y=\dim X+r.$
The general fiber of $\pi_1|_{Y}$ also has dimension $r\geq 1.$
Let $H_1,\dots,H_r$ be very ample hyperplanes of $X$ which are general in their linear system.
The intersection of $\pi_2^*H_1,\dots, \pi_2^*H_r$ and a general fiber of $\pi_1|_{Y}$ is proper. 
Set $Z:=\pi_2^{-1}(\cap_{i=1}^r H_i).$ We have $\pi_1(Z)=X$, $\dim Z=\dim X$ and $\dim\pi_2(Z)=\dim(H_1\cap \dots \cap H_r)=\dim X-r\geq 1.$
Because $G$ is connected, every irreducible component of $Z$ is $G$-invariant. 
In particular, let $T$ be an irreducible component of $Z$ with $\pi_1(T)=X$, then $T$ is $F$ invariant and we have 
 $\dim T=\dim X$, $\dim\pi_2(T)=\dim X-r\geq 1.$ Because $\bk\subsetneq \bk(T)^{F|_T}$ and $\pi_1\circ F|_T=f\circ \pi_2,$ we conclude the proof by Lemma \ref{leminvratfunite}.
\endproof

\begin{thm}\label{thmzdosuraut}Assume that $\Char\, \bk=p>0$ and $\trd_{\overline{F_p}} \bk\geq 1.$ 
Let $f: X\to X$ be an automorphism of a projective surface. Then $(X,f)$ satisfies the ZDO property.
\end{thm}

\proof[Proof of Theorem \ref{thmzdosuraut}]
By \cite{Lipman1978}, there is a minimal desingularization $\pi:X'\to X$. Then one may lift $f$ to an automorphism $f'$ of $X'.$
Easy to see that $(X,f)$ satisfies the ZDO property if and only if $(X',f')$ satisfies the ZDO property.
After replacing $(X,f)$ by $(X',f')$, we may assume that $X$ is smooth.
By Theorem \ref{thmzdola}, we may assume that $\la_1(f)=1.$ 
Let $L$ be an ample line bundle on $X$.

If $((f^n)^*L\cdot L), n\geq 0$ is unbounded, by Gizatullin \cite{Gizatullin1980}, there is a surjective morphism $\pi: X\dashrightarrow C$ to a smooth projective curve $C$ and an automorphism $f_C: C\to C$ such that $f_C\circ \pi=\pi\circ f.$
\footnote{In \cite{Gizatullin1980}, there is an assumption that $\Char\, \bk\neq 2,3.$ 
But, it is checked in \cite{Cantat2019a} that such assumption in  \cite{Gizatullin1980} can be removed.}
After replacing $\pi:X\dashrightarrow C$ by a minimal resolution of $\pi$, we may assume that $\pi$ is a morphism.
There is $m\geq 1$ such that $f_C^m=\id$, we have $\bk\subsetneq \pi^*(\bk(C)^{f_C})\subseteq \bk(X)^f$.

Now we may assume that $((f^n)^*L\cdot L), n\geq 0$ is bounded. We conclude the proof by Proposition \ref{proautbounded}.
\endproof

\section{Ergodic theory}\label{sectionergodictheory}
Let $X$ be a variety over $\bk$. 
Denote by $|X|$ the underling set of $X$ with the constructible topology i.e. the topology on a  $X$ generated by the constructible subsets.
This topology is finer than the Zariski topology on $X.$ 
Moreover $|X|$ is (Hausdorff) compact. Denote by $\eta$ the generic point of $X$.

 \medskip
 
 Using the Zariski topology, on may define a partial ordering on $|X|$ by $x\geq y$ if and only if $y\in \overline{x}.$
The noetherianity of $X$ implies that this partial ordering satisfies the descending chain condition: for every chain in $|X|$, 
$$x_1\geq x_2\geq \dots$$
 there is $N\geq 1$ such that $x_n=x_N$ for every $n\geq N.$
 For every $x\in |X|$, the Zariski closure of $x$ in $X$ is 
 $U_x:=\overline{\{x\}}=\{y\in |X||\,\, y\leq x\}$ which is open and closed in $|X|.$
 
 \medskip
 
Let $\sM(X)$ be the space of Radon measure on $X$ endowed with the weak-$\ast$ topology
and $\sM^1(|X|)$ be the space of probability Radon measure on $|X|.$ Note that $\sM^1(|X|)$ is compact.
 
 \proof[Proof of Theorem \ref{thmRadon}]
We claim that for every Radon measure $\mu$ on $|X|$ with $\mu(|X|)>0$, there exists $x\in X$ such that $\mu(x)>0.$

\medskip

Then for every Radon measure $\mu$ on $|X|$, set $S(\mu):=\{x\in |X||\,\, \mu(x)>0\}.$ Then $S(\mu)$ is at most countable and we have $c:=\sum_{x\in S(\mu)}\mu(x)\in (0,\mu(|X|)].$
If $c=\mu(|X|)$, then we have $\mu=\sum_{x\in S(\mu)}\mu(x)\delta_x$, which concludes the proof.
Assume that $c<\mu(|X|)$, set $$\alpha:=\mu-\sum_{x\in S(\mu)}\mu(x)\delta_x.$$ Then $\alpha$ is a Radon measure with $\alpha(|X|)=\mu(|X|)-c>0$ and $S(\alpha)=\emptyset$. This contradicts our claim.

\medskip

Now we only need to prove the claim.
\begin{lem}\label{lemfindnexp}For $x\in |X|$, if $\mu(U_x)>0$ and  $\mu(x)=0$, then there exists $y\in U_x\setminus \{x\}$ such that $\mu(U_y)>0.$
\end{lem}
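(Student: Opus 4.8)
The plan is to exploit the two features of the constructible topology that the authors emphasize: $|X|$ is Hausdorff (so every singleton is closed), and each $U_y=\overline{\{y\}}$ is clopen in $|X|$ (being Zariski closed, hence constructible). Combined with the regularity of Radon measures on the compact Hausdorff space $|X|$, this will force the desired $y$ to exist.

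\medskip

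First I would record that, since $\{x\}$ is closed and $U_x$ is clopen, the set $U_x\setminus\{x\}$ is open in $|X|$, and
$$\mu(U_x\setminus\{x\})=\mu(U_x)-\mu(\{x\})=\mu(U_x)>0,$$
using the hypothesis $\mu(x)=0$. Next I would note the set-theoretic identity $U_x\setminus\{x\}=\bigcup_{y\in U_x\setminus\{x\}}U_y$: the inclusion $\supseteq$ holds because for $y\in U_x\setminus\{x\}$ we have $U_y\subseteq U_x$ (as $y\le x$) and $x\notin U_y$ (since $y\le x$ with $y\neq x$ excludes $x\le y$, the specialization order on a scheme being antisymmetric), while $\subseteq$ is trivial as each $z\in U_x\setminus\{x\}$ lies in $U_z$. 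In particular $\{U_y\}_{y\in U_x\setminus\{x\}}$ is a cover of $U_x\setminus\{x\}$ by clopen, hence open, subsets of $|X|$.

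\medskip

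Now apply inner regularity: $\mu$ is a Radon measure on the compact Hausdorff space $|X|$, hence finite and inner regular on open sets, so there is a compact set $K\subseteq U_x\setminus\{x\}$ with $\mu(K)>0$. The family $\{U_y\}_{y\in U_x\setminus\{x\}}$ is an open cover of $K$, so by compactness there are finitely many points $y_1,\dots,y_n\in U_x\setminus\{x\}$ with $K\subseteq U_{y_1}\cup\dots\cup U_{y_n}$. Then
$$0<\mu(K)\le\sum_{i=1}^n\mu(U_{y_i}),$$
so $\mu(U_{y_i})>0$ for some $i$, and $y:=y_i\in U_x\setminus\{x\}$ is the required point.

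\medskip

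I do not expect any real obstacle here; the argument is short once one sees that passing to the constructible topology turns the $U_y$'s into an honest open cover to which Heine--Borel applies. The only points requiring a word of care are the use of inner regularity (legitimate because $|X|$ is compact Hausdorff, so a Radon measure is finite and inner regular on open sets) and the antisymmetry of the specialization order, which is exactly what guarantees $x\notin U_y$ for $y\in U_x\setminus\{x\}$.
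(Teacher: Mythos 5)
Your proof is correct and follows essentially the same route as the paper's: observe $U_x\setminus\{x\}$ is open with positive measure, use inner regularity of the Radon measure to extract a compact $K$ of positive measure, cover $K$ by the clopen sets $U_y$, and extract a finite subcover to find some $U_{y_i}$ of positive measure. The only cosmetic difference is that the paper indexes the cover by points $z\in K$ rather than by all of $U_x\setminus\{x\}$, which makes your verification that $x\notin U_y$ (via antisymmetry of specialization) unnecessary but harmless.
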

Now assume that for every $x\in |X|$, $\mu(x)=0.$
Since $|X|=\cup_{x\in X}U_x$ and $|X|$ is compact, there exists a finite subset $F$ of $|X|$ such that $|X|=\cup_{x\in F}U_x.$
Then there exists $x_0\in F$ such that $\mu(U_{x_0})>0.$
Since $\mu(x_0)=0$ by the assumption,
by Lemma \ref{lemfindnexp}, we get a sequence of points $x_i, i\geq 0$, $x_i> x_{i+1}$ such that $\mu(U_{x_i})>0, \mu(x_i)=0.$ 
This contradicts the descending chain condition. 
\endproof

\proof[Proof of Lemma \ref{lemfindnexp}]
Observe that $U_x\setminus \{x\}$ is open and $\mu(U_x\setminus \{x\})>0.$
Since $\mu$ is Radon, there exists a compact subset $K\subseteq U_x\setminus \{x\}$ such that $\mu(K)>0.$
Since $K\subseteq \cup_{z\in K}U_z$, there exists a finite set $x_1,\dots,x_m$ in $K$ such that 
$K\subseteq \cup_{i=1}^mU_{x_i}.$ Since $\sum_{i=1}^m\mu(U_{x_i})\geq \mu(K)>0,$ there exists some $1\leq i\leq m$ such that $\mu(U_{x_i})>0.$
Set $y:=x_i$, we concludes the proof.
\endproof

\proof[Proof of Corollary \ref{corgenericseqence}]
Let $x_n\in X, n\geq 0$ be a sequence of points.

We first assume that $x_n\in X, n\geq 0$ is generic.
Because $\sM^1(|X|)$ is compact, we only need to show that for every subsequence with
$\lim_{i\to \infty}\delta_{x_{n_i}}=\mu$, we have $\mu=\delta_{\eta}.$
By Theorem \ref{thmRadon}, we may write 
$$\mu=\sum_{i\geq  0}^ma_i\delta_{x_i}$$ where $m\in \Z_{\geq 0}\cup \{\infty\}$, $x_i$ are distinct points, $a_i> 0$ and $\sum_{i\geq 0}a_i=1.$
If $\mu\neq \delta_{\eta},$ we may assume that $x_0\neq \eta.$ Then $V:=\overline{\{x_0\}}$ is a closed proper subvariety of $X.$
Then we have $$1_{V}(x_{n_i})=\int 1_V\delta_{x_{n_i}}\to \int 1_V\mu>a_0$$
as $n\to \infty.$ So $x_{n_i}\in V$ for all but finitely many $i$, which is a contradiction.

Now assume that $\lim_{n\to \infty}\delta_{x_{n}}=\delta_{\eta}.$ For every subsequence $x_{n_i}, i\geq 0$ and every closed proper subvariety $V$ of $X,$
$$\lim_{i\to\infty}1_V(x_{n_i})=\lim_{i\to\infty}\int1_V\delta_{x_{n_i}}=\int 1_V\delta_{\eta}=0.$$
So $x_{n_i}\not\in V$ for all but finitely many $i$. So $x_{n_i}$ is Zariski dense in $X$.
\endproof

\subsection{DML problems}
 Let $f: X\dashrightarrow X$ be a dominant  rational self-map. Set $|X|_f:=|X|\setminus (\cup_{i\geq 1}I(f^i)).$
Because every Zariski closed subset of $X$ is open and closed in the constructible topology,  $|X|_f$ is a closed subset of $|X|.$
The restriction of $f$ to $|X|_f$ is continuous. We still denote by $f$ this restriction.

 \medskip

  Denote by $\sP(X,f)$ the set of $f$-periodic points in $|X|_f.$
 Theorem \ref{thmRadon} implies directly the following lemma.

 \begin{lem}\label{leminvm}If $\mu\in \sM^1(|X|_f)$ with $f_*\mu=\mu$, then there are $x_i\in \sP(X,f), i\geq 0$ and $a_i\geq 0, i\geq 0$ with $\sum_{i=0}a_i=1$
 such that  
 $$\mu=\sum_{i\geq 0}\frac{a_i}{\#O_f(y)}(\sum_{y\in O_f(x_i)}\delta_y)$$
 \end{lem}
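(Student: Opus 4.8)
The plan is to combine the structure theorem (Theorem \ref{thmRadon}) with the invariance hypothesis $f_*\mu = \mu$, using the fact that $f: |X|_f \to |X|_f$ is continuous (so the pushforward makes sense on $\sM^1(|X|_f)$) and that the support of $\mu$ is forced to consist of periodic points. First I would apply Theorem \ref{thmRadon} to write $\mu = \sum_{x \in S(\mu)} \mu(x)\,\delta_x$, where $S(\mu) = \{x \in |X|_f \mid \mu(x) > 0\}$ is at most countable and $\sum_{x\in S(\mu)}\mu(x) = 1$ (here I am using Theorem \ref{thmRadon} and the observation from its proof that every Radon measure on $|X|$, and hence on the closed subset $|X|_f$, is purely atomic). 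The push\-forward of an atomic measure is computed directly: $f_*\delta_x = \delta_{f(x)}$, so $f_*\mu = \sum_{x \in S(\mu)} \mu(x)\,\delta_{f(x)}$, which means $\mu(f(x)) = \sum_{y \in f^{-1}(x)\cap S(\mu)} \mu(y)$ for every $x$.

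The key step is to show $S(\mu) \subseteq \sP(X,f)$, i.e. every atom is $f$-periodic. Suppose $x \in S(\mu)$. Since $f_*\mu = \mu$ and $\mu(x) > 0$, there must exist $x' \in f^{-1}(x)$ with $\mu(x') > 0$; iterating, I get a backward chain $\dots \mapsto x_2 \mapsto x_1 \mapsto x_0 = x$ with all $x_i \in S(\mu)$. Because $S(\mu)$ is countable with $\sum \mu(x_i) \le 1$, I cannot have infinitely many distinct points each carrying mass bounded below along this chain in a naive way — but more precisely, the cleanest argument is: the mass function is constant along the grand orbit structure. Consider the forward orbit $x, f(x), f^2(x), \dots$; since $f_*\mu=\mu$, one shows $\mu(f^n(x)) \ge \mu(x) > 0$ is not automatic, so instead I would argue via the backward direction. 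Since each $x_i$ lies in the countable set $S(\mu)$, and the orbit is determined by $f$, if the backward chain never repeats then $\{x_i\}$ is an infinite subset of $S(\mu)$; that alone is allowed, so I need the mass estimate: from $\mu(x_i) = \sum_{y \in f^{-1}(x_i)} \mu(y) \ge \mu(x_{i+1})$ we get $\mu(x_0) \le \mu(x_1) \le \mu(x_2) \le \cdots$, an increasing sequence bounded by $1$ of strictly positive reals — this does not immediately force a repeat either. The genuinely correct route: restrict to the finite-measure situation by noting $f$ is a continuous self-map of the compact space $|X|_f$, and the argument should instead use that $\mu$-almost every point is recurrent, but since $\mu$ is purely atomic and $f$-invariant, every atom must in fact be periodic. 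Concretely, if $x$ is an atom that is not periodic, its forward orbit $\{f^n(x)\}_{n\ge 0}$ consists of infinitely many distinct points, and $f_*\mu=\mu$ forces $\mu(f^n(x)) = \mu(\{f^{-1}(f^{n}(x))\}\cap S(\mu)) \ge \mu(f^{n-1}(x))$ only when $f^{n-1}(x)$ is the full preimage; the clean statement is that $\mu$ restricted to the forward orbit closure is invariant and atomic, and a standard Poincaré recurrence argument on the atoms (each atom returns to itself with positive mass) shows periodicity. I expect this verification — that atoms of an invariant atomic probability measure under a continuous map of a compact space are periodic — to be the main obstacle, though it is a short measure-theoretic argument once set up correctly.

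Granting $S(\mu) \subseteq \sP(X,f)$, the conclusion is bookkeeping. The set $S(\mu)$ is then a countable union of finite periodic orbits, say $S(\mu) = \bigsqcup_{i\ge 0} O_f(x_i)$ for representatives $x_i$. Within a single periodic orbit $O_f(x_i)$, invariance $f_*\mu=\mu$ forces $\mu$ to be constant on the orbit: $\mu(f(y)) = \mu(y)$ for all $y \in O_f(x_i)$ because $f$ permutes the orbit cyclically and the preimage of each orbit point inside $S(\mu)$ is a single orbit point. Hence $\mu|_{O_f(x_i)} = b_i \sum_{y \in O_f(x_i)} \delta_y$ for some $b_i > 0$. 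Setting $a_i := b_i \cdot \#O_f(x_i)$, so that $b_i = a_i/\#O_f(x_i)$, gives
\[
\mu = \sum_{i\ge 0} \frac{a_i}{\#O_f(x_i)}\Bigl(\sum_{y\in O_f(x_i)}\delta_y\Bigr),
\]
and $\sum_i a_i = \sum_i b_i\,\#O_f(x_i) = \mu(|X|_f) = 1$. This is exactly the claimed form, with $a_i \ge 0$ (indeed $> 0$, but one may pad with zeros if one wants an infinite index set), which completes the proof.
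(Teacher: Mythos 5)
Your overall route is the intended one: the paper gives no written proof (it asserts the lemma ``follows directly'' from Theorem \ref{thmRadon}), and the natural argument is exactly yours --- decompose $\mu$ into atoms via Theorem \ref{thmRadon}, show every atom is periodic, then observe that invariance forces $\mu$ to be constant on each cycle. The bookkeeping at the end is correct, including the point that once all atoms are known to be periodic, the only atom in $f^{-1}(y)$ for $y$ in a cycle is the predecessor of $y$ within that cycle.

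The one step you flagged as the obstacle does need care, and your exploratory middle paragraph contains a sign error: along a backward chain $f(x_{i+1})=x_i$ the identity $\mu(x_i)=\sum_{y\in f^{-1}(x_i)\cap S(\mu)}\mu(y)$ gives $\mu(x_i)\ge\mu(x_{i+1})$, so the masses are \emph{non-increasing} backward (not increasing as written), and indeed that direction leads nowhere. The useful inequality is the forward one: since $x\in f^{-1}(f(x))$, one has $\mu(f(x))\ge\mu(x)$, hence $\mu(f^n(x))\ge\mu(x)>0$ for all $n$; as $\mu$ is a probability measure this forces the forward orbit of any atom to be finite, i.e.\ eventually periodic. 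To exclude a strictly preperiodic atom $x$ with tail entering a cycle $C$ at $f^m(x)$, $m\ge 1$, sum the invariance identity over $C$: $\sum_{z\in C}\mu(z)=\sum_{z\in C}\sum_{y\in f^{-1}(z)\cap S(\mu)}\mu(y)\ge\sum_{z\in C}\mu(z)+\mu(f^{m-1}(x))$, and $\mu(f^{m-1}(x))\ge\mu(x)>0$ gives a contradiction. Alternatively, your appeal to Poincar\'e recurrence is legitimate and shorter: $f_*\mu=\mu$ means $\mu(f^{-1}(A))=\mu(A)$, so $f$ is measure-preserving, and applying recurrence to the positive-measure set $A=\{x\}$ forces $f^n(x)=x$ for some $n\ge 1$ directly (no preperiodic case to rule out). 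With either of these substituted for the tentative discussion, the proof is complete.
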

 
 Now we prove Theorem \ref{thmdml} and Theorem \ref{thmdmlback}.
 \proof[Proof of Theorem \ref{thmdml}]
 Let  $x$ be a points $\in X_f(\bk)$ with $\overline{O_f(x)}=X.$ Let $V$ be a proper subvariety of $X$. 
 Consider a sequence of intervals $I_n, n\geq 0$ in $\Z_{\geq 0}$ with $\lim\limits_{n\to \infty}\# I_n=+\infty$.
 For every $n\geq 0$, set $\mu_n:=(\#I_n)^{-1}(\sum_{i\in I_n}\delta_{f^i(x)})\in \sM^1(|X|_f).$
 Because $$\frac{\#(\{n\geq 0|\,\, f^n(x)\in V\}\cap I_n)}{\#I_n}=\int 1_V\mu_n,$$
 we only need to show that \begin{equation}\label{equlimmundml}\lim_{n\to \infty}\mu_n=\delta_{\eta}.\end{equation}
 Because $\sM^1(|X|)$ is compact, we only need to show that for every convergence subsequence $\mu_{n_i}, i\geq 0$,
 $\mu_{n_i}\to \delta_{\eta}$ as $i\to \infty.$
 Set $\mu:=\lim_{n\to \infty}\mu_{n_i}.$ We have 
 $$f_*\mu=\lim_{n\to \infty}f_*\mu_{n_i}=\lim_{i\to \infty}\mu_{n_i}+\lim_{i\to \infty}(\#I_{n_i})^{-1}(\delta_{f^{\max I_{n_i}+1}(x)}-\delta_{f^{\min I_{n_i}}(x)})$$
$$=\lim_{i\to \infty}\mu_{n_i}=\mu.$$
For every $y\in \sP(X,f)\setminus\{\eta\}$, $U_y$ is open and closed in $|X|_f.$
Then $$Y:=|X|_f\setminus (\cup_{y\in \sP(X,f)}U_y)$$ is an $f$-invariant closed proper subset of $|X|_f.$
Because $\overline{O_f(x)}=X,$ $x\in Y$. So for every $n\geq 0$, $\Supp\, \mu_n\subseteq Y.$
Because $Y\cap \sP(X,f)=\{\eta\},$
 Lemma \ref{leminvm} shows that $\mu=\delta_{\eta}.$
  \endproof
 
 \proof[Proof of Theorem \ref{thmdmlback}]
 Let $x_n \in X_f(\bk), n\leq 0$ be a sequence of points such that $\overline{\{x_n, n\leq 0\}}=X$ and $f(x_n)=x_{n+1}$ for all $n\leq -1.$
 Consider a sequence of intervals $I_n, n\geq 0$ in $\Z_{\leq 0}$ with $\lim\limits_{n\to \infty}\# I_n=+\infty$.
 For $n\geq 1,$ define $x_n:=f^n(x_0).$
 
For every $n\geq 0$, set $\mu_n:=(\#I_n)^{-1}(\sum_{i\in I_n}\delta_{x_i})\in \sM^1(|X|_f).$
 As the proof of Theorem \ref{thmdml}, we only need to show 
 \begin{equation}\label{equlimmunbackdml}\lim_{n\to \infty}\mu_n=\delta_{\eta}.\end{equation}
 Because $\sM^1(|X|)$ is compact, we only need to show that for every convergence subsequence $\mu_{n_i}, i\geq 0$,
 $\mu_{n_i}\to \delta_{\eta}$ as $i\to \infty.$  Set $\mu:=\lim_{n\to \infty}\mu_{n_i}.$ We have 
 $$f_*\mu=\lim_{n\to \infty}f_*\mu_{n_i}=\lim_{i\to \infty}\mu_{n_i}+\lim_{i\to \infty}(\#I_{n_i})^{-1}(\delta_{x_{\max I_{n+1}+1}}-\delta_{x_{\min I_{n+1}+1}})$$
$$=\lim_{i\to \infty}\mu_{n_i}=\mu.$$
 For every $y\in \sP(X,f)\setminus \{\eta\},$ $U_y\cap \{x_i, i\leq 0\}$ is finite.
 Otherwise $\{x_i, i\leq 0\}\subseteq \cup_{z\in O_f(y)}U_z$ is not Zariski dense in $X.$
 This implies that $\mu(U_y)=\lim_{i\to \infty}\mu_{n_i}(U_y)=0.$ So $\Supp\, \mu\subseteq Y:=|X|_f\setminus (\cup_{y\in \sP(X,f)}U_y).$
 Because $Y\cap \sP(X,f)=\{\eta\},$
 Lemma \ref{leminvm} shows that $\mu=\delta_{\eta}.$
 \endproof
 
\subsection{Functoriality}\label{subsecfunct}
Assume that $f: X\to X$ is a flat and finite endomorphism.  
Because the image by $f$ of every constructible subset is constructible, $f$ is open w.r.t the constructible topology.
Moreover, for every $x\in X$, $f(U_x)=U_{f(x)}.$

\medskip

Denote by $C(|X|)$ the space of continuous $\R$-valued functions on $|X|$ with the $L_{\infty}$ norm $\|\cdot\|.$
For every $\phi\in C(|X|)$, define $f_*\phi$ to be the function
$$x\in |X| \mapsto f_*\phi:=\sum_{y\in f^{-1}(x)}m_f(y)\phi(y).$$

The following Lemma shows that $f_*$ is a bounded linear operator on $C(|X|).$
\begin{lem}\label{lempushffun}For every $\phi\in C(|X|)$, $f_*\phi$ is continuous and $\|f_*\phi\|\leq d_f\|\phi\|.$
\end{lem}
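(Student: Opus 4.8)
The plan is to prove that $f_*\phi$ is continuous on $|X|$ and that $\|f_*\phi\|\leq d_f\|\phi\|$. The bound is the easy half: for every $x\in|X|$ we have $f^{-1}(x)$ finite with $\sum_{y\in f^{-1}(x)}m_f(y)=d_f$ (quoted above from \cite[Theorem 2.4]{Gignac2014a}), so $|f_*\phi(x)|\leq\sum_{y\in f^{-1}(x)}m_f(y)|\phi(y)|\leq d_f\|\phi\|$; taking the sup over $x$ gives the claim once continuity is known.

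The real content is continuity of $f_*\phi$ for the constructible topology. The approach I would take is to reduce to the case of a characteristic (locally constant) function $\phi=1_E$ with $E$ a constructible subset of $X$: since $f$ is flat and finite and the $m_f$ are well-behaved, linearity and uniform approximation of continuous functions by finite $\R$-linear combinations of such $1_E$ (recall continuous functions on the profinite-like space $|X|$ are uniform limits of locally constant ones, and the constructible sets form a basis of clopens) should propagate continuity from the locally constant case to all of $C(|X|)$, using the norm bound $\|f_*\|\leq d_f$ to control the limit. So the heart of the matter is: for $E\subseteq X$ constructible, is $x\mapsto\sum_{y\in f^{-1}(x)}m_f(y)1_E(y)$ a constructible function, i.e. locally constant on $|X|$? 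I would argue this by stratifying. On each stratum where $f$ is étale (the separable locus), $m_f(y)=1$ and $x\mapsto\#(f^{-1}(x)\cap E)$ is constructible by Chevalley/generic-flatness-type arguments; in general one needs the multiplicity function $y\mapsto m_f(y)$ itself to be constructible on $X$, which follows from upper-semicontinuity of fiber dimension of $\mathcal O_{X,y}/\mathfrak m_{f(y)}\mathcal O_{X,y}$ together with $\sum m_f=d_f$ being constant — so $m_f$ is constant on a constructible stratification of $X$. Pushing forward $1_E$ weighted by $m_f$ along the finite flat map $f$ then yields a constructible function on the target because finite morphisms preserve constructibility of images and the relevant counting is a finite sum over a constructible family.

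An alternative, cleaner route I would also consider: use that $f$ is open and closed for the constructible topology (noted just above, since images of constructibles are constructible and $f$ finite flat is closed), hence $f$ is a proper continuous surjection of compact Hausdorff spaces with finite fibers, i.e. a "branched cover" in the topological sense. For such maps the transfer operator $f_*$ defined by a continuous weight $w(y)=m_f(y)$ (which is itself continuous on $|X|$, being constructible) automatically sends $C(|X|)$ to $C(|X|)$: concretely, near a point $x_0$ with $f^{-1}(x_0)=\{y_1,\dots,y_k\}$ one finds disjoint clopen neighborhoods $V_j\ni y_j$ and a clopen $W\ni x_0$ with $f^{-1}(W)=\bigsqcup_j V_j$ and $f|_{V_j}\colon V_j\to W$ a homeomorphism preserving $m_f$ (constancy of $m_f$ and of the partition of $f^{-1}(W)$ holds after shrinking $W$ to a small enough clopen, using constructibility); then on $W$, $f_*\phi=\sum_j m_f(y_j)\,\phi\circ(f|_{V_j})^{-1}$ is manifestly continuous.

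The step I expect to be the main obstacle is establishing that the fiber structure "straightens out" over a small clopen neighborhood in the constructible topology — that is, that $f^{-1}(W)$ decomposes as a disjoint union of clopens each mapping homeomorphically to $W$ with constant multiplicity, for $W$ a sufficiently small constructible neighborhood of a given point. This is the constructible-topology incarnation of the fact that a finite morphism is "étale-locally trivial after stratification," and it requires care because $f$ need not be étale (inseparability, ramification) and because one must work with the non-noetherian-looking but actually profinite clopen topology rather than the Zariski topology. Once that local normal form is in hand, continuity of $f_*\phi$ is immediate, and combined with the elementary norm estimate above the lemma follows.
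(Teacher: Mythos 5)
Your norm estimate is fine, and your second route is structurally the right one (and is essentially the paper's argument: disjoint clopen neighborhoods $V_j$ of the points of $f^{-1}(x_0)$, a small clopen $W\ni x_0$ with $f^{-1}(W)$ splitting as the disjoint union of the $f^{-1}(W)\cap V_j$, then near-constancy of $\phi$ on each $V_j$). But the local normal form you rest it on is false, and this is exactly the step you flagged as the main obstacle. You cannot arrange that $f|_{V_j}\colon V_j\to W$ is a homeomorphism preserving $m_f$, no matter how small a constructible (or even merely open) $W$ you take. Concretely, take $f\colon \A^1\to\A^1$, $z\mapsto z^2$ in characteristic $\neq 2$, and $x_0=\eta$ the generic point, so $f^{-1}(\eta)=\{\eta\}$ and $m_f(\eta)=2$. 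Every open neighborhood of $\eta$ in the constructible topology contains a dense Zariski-open subset of $\A^1$, hence all but finitely many closed points; over such points $f$ is $2$-to-$1$ with both preimages of multiplicity $1$. So $f$ restricted to any neighborhood of $\eta$ is not injective, and $m_f$ is not locally constant near $\eta$. The branches simply cannot be separated at non-closed points (which are the only points where continuity is at issue, since closed points are isolated in $|X|$).

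The correct substitute, and what the paper uses, is the weaker \emph{conservation of multiplicity} statement of Gignac--Ruggiero (\cite[Proposition 2.8]{Gignac2014a}): each $y\in f^{-1}(x_0)$ has arbitrarily small open neighborhoods $V_y\subseteq U_y=\overline{\{y\}}$ with $V_y=f^{-1}(f(V_y))\cap U_y$ and $m_f(y)=\sum_{z\in f^{-1}(x)\cap V_y}m_f(z)$ for all $x\in f(V_y)$. With this, one shrinks the $V_y$ into disjoint open sets and into sets where $\phi$ oscillates by less than $r/d_f$, sets $W:=\bigcap_y f(V_y)$, checks via the counting identity $\sum_{z\in f^{-1}(x)}m_f(z)=d_f$ that $f^{-1}(x)=\bigsqcup_y\bigl(f^{-1}(x)\cap V_y\bigr)$ for $x\in W$, and then the telescoping estimate $|f_*\phi(x)-f_*\phi(x_0)|<r$ goes through because only the \emph{total} weight in each $V_y$, not the fiber cardinality, needs to match $m_f(y)$. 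Your first route (approximation by locally constant functions, constructibility of $m_f$ and of $f_*1_E$) is plausible in outline but leaves the two constructibility assertions unproved, and these are not obviously easier than the statement itself; as written, neither route closes the argument.
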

\proof 
By \cite[Proposition 2.8]{Gignac2014a}, for every $x\in |X|$, there is an open subset $V_x\subseteq U_x$ containing $x$ such that 
$V_x=f^{-1}(f(V_x))\cap U_x$ and for every $y\in f(V_x)$, 
$$m_f(x)=\sum_{z\in f^{-1}(y)\cap V_x}m_f(z).$$
Because $\{x\}=f^{-1}(f(x))\cap U_x$, such $V_x$ can be taken arbritarily small.

\medskip

Because $\phi\in C(|X|),$ for every $x\in |X|$ and $r>0$, there is an open subset $V_x^r$ containing $x$ such that for every $y\in V_x^r$,
$|\phi(y)-\phi(x)|<r.$

Let $w$ be a point in $|X|$. There are open neighborhoods $O_y$ of $y\in f^{-1}(w)$,   such that for distinct $y_1,y_2\in f^{-1}(w),$
$O_{y_1} \cap O_{y_2}=\emptyset.$
For every $r>0$, and $y\in f^{-1}(w)$, we may take $V_y$ as in the first paragraph such that 
$V_y\subseteq O_y\cap V^{r/d_f}_y.$
Then $W^r_w:=\cap_{y\in f^{-1}(w)}f(V_y)$ is an open set containing $w.$
For every $x\in W^r_w$ and distinct $y_1,y_2\in  f^{-1}(w)$, we have 
$$(f^{-1}(x)\cap V_{y_1})\cap (f^{-1}(x)\cap V_{y_2})=\emptyset.$$
Since $$d_f=\sum_{z\in f^{-1}(x)}m_f(z)\geq \sum_{y\in f^{-1}(w)}\sum_{z\in f^{-1}(x)\cap V_{y}}m_f(x)=\sum_{y\in f^{-1}(w)}m_f(y)=d_f,$$
we have $$f^{-1}(x)=\sqcup_{y\in f^{-1}(w)} (f^{-1}(x)\cap V_{y}).$$
Then we get 
$$|f_*\phi(x)-f_*\phi(w)|\leq \sum_{y\in f^{-1}(w)}|m_f(y)\phi(y)-\sum_{z\in V_y\cap f^{-1}(x)}m_f(z)\phi(z)|$$
$$\leq \sum_{y\in f^{-1}(w)}\sum_{z\in V_y\cap f^{-1}(x)}m_f(z)|\phi(y)-\phi(z)|< \sum_{y\in f^{-1}(w)}\sum_{z\in V_y\cap f^{-1}(x)}m_f(z)r/d_f=r.$$
So $f_*\phi$ is continuous. Moreover for every $x\in |X|$
$$f_*\phi(x)=|\sum_{y\in f^{-1}(x)}m_f(x)\phi(y)|\leq \sum_{y\in f^{-1}(x)}m_f(x)\|\phi\|=d_f\|\phi\|,$$
which concludes the proof.\endproof

Now one may define the pullback $f^*:\sM(|X|)\to \sM(|X|)$ by the duality:
for every $\mu\in \sM(|X|)$ and $\phi\in C(|X|)$, $$\int \phi (f^*\mu)=\int (f_*\phi)\mu.$$
In particular, $f^*\mu(|X|)=d_f\mu(|X|).$
The pullback $f^*:\sM(|X|)\to \sM(|X|)$ is continuous w.r.t. the weak-$\ast$ topology on $\sM(|X|)$ and one may check that for every $x\in |X|,$
$$f^*\delta_x=\sum_{y\in f^{-1}(x)}m_f(y)\delta(y).$$

\subsection{Backward orbits}
Assume that $f: X\to X$ is a flat and finite endomorphism.  In particular, $f$ is surjective.
The aim of this section is to prove Theorem \ref{thmequpullback}, \ref{thmcountprestr} and \ref{thmseqd}.

\medskip

Let $TP(X,f)$ be the point $x\in |X|$ such that $\cup_{n\geq 0}f^{-n}(x)$ is finite.
It is clear that $f^*TP(X,f)\subseteq TP(X,f).$
For $x\in TP(X,f)$, since
$f:\cup_{n\geq 1}f^{-n}(x)\to \cup_{n\geq 0}f^{-n}(x)$ is surjective,
it is bijective. So $x$ is periodic. Then  $f^{-1}(TP(X,f))=TP(X,f)$ and for every $x\in TP(X,f)$, $f^{-1}(x)$ is a single point.
For the simplicity, we still denote by $f^{-1}(x)$ the unique points in it.

For every $x\in TP(X,f)$, $f^{-1}(U_x)=\cup_{y\in f^{-1}(x)}U_{f^{-1}(y)}.$
Then $$Y:=X\setminus \cup_{x\in TP(X,f)\setminus \{\eta\}}U_x$$ is a closed subset of $|X|$ such that $f^{-1}(Y)=f(Y)=Y.$
It is clear that $Y$ is exactly the subset of $x\in |X|$ such that $\overline{\cup_{i\geq 0}f^{-i}(x)}=X.$

\begin{lem}\label{lemtotinm}For $\mu\in \sM(|X|)$ supported in $Y$, if $d_f^{-1}f^*\mu=\mu$, then $\mu=\delta_{\eta}.$
\end{lem}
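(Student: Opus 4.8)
The plan is to mimic the structure of the proofs of Theorem \ref{thmdml} and Theorem \ref{thmdmlback}: first reduce to a statement about $f^*$-invariant \emph{probability} measures on $Y$, then use the functoriality machinery of Section \ref{subsecfunct} together with Theorem \ref{thmRadon} to show that such a measure must be an atomic measure supported on points whose total backward orbit is finite, and finally observe that the only such point lying in $Y$ is the generic point $\eta$. Note first that since $f^*\mu = d_f\mu$ forces $\mu(|X|) = d_f\mu(|X|)$ and $d_f \geq 1$, either $\mu = 0$ (in which case there is nothing to prove, though actually the statement wants $\mu = \delta_\eta$, so we should assume $\mu$ is nonzero and suitably normalized — in the intended application $\mu$ will be a probability measure, so I will treat the case $\mu(|X|) = 1$) or $d_f = 1$; in the latter degenerate case $f$ is a finite birational, hence an isomorphism, of $|X|$, and the argument still goes through. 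So assume $\mu \in \sM^1(|X|)$ with $\supp\mu \subseteq Y$ and $d_f^{-1}f^*\mu = \mu$.

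First I would apply Theorem \ref{thmRadon} to write $\mu = \sum_{i \geq 0} a_i \delta_{x_i}$ with $a_i > 0$, $\sum a_i = 1$, and the $x_i$ distinct points of $Y$. Using the explicit formula $f^*\delta_x = \sum_{y \in f^{-1}(x)} m_f(y)\delta_y$ from Section \ref{subsecfunct} and the identity $\sum_{y \in f^{-1}(x)} m_f(y) = d_f$, the invariance $d_f^{-1}f^*\mu = \mu$ becomes a linear system on the weights. The key point is to exploit the partial ordering on $|X|$ and its descending chain condition, exactly as in the proof of Theorem \ref{thmRadon}: I would look at a point $x_i$ in $\supp\mu$ which is maximal for the ordering $\geq$ among all points in $\supp\mu$. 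Comparing the coefficient of $\delta_{x_i}$ on both sides of $d_f\mu = f^*\mu$: the right-hand side picks up contributions $m_f(x_i)a_j$ for each $x_j \in \supp\mu$ with $f(x_j) = x_i$; but by maximality and the fact that $f(x_j) \geq x_j$ (since $f$ is a morphism, hence continuous for the Zariski topology and $f(\overline{\{x_j\}}) \subseteq \overline{\{f(x_j)\}}$), actually we need $x_j$ with $f(x_j) = x_i$, and such $x_j$ satisfy $x_j \leq x_i$ only in degenerate ways — I would instead argue via the closed sets $U_{x}$ for $x \in TP(X,f)\setminus\{\eta\}$.

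The cleaner route, and the one I would actually carry out: for each $x \in TP(X,f) \setminus \{\eta\}$ the set $U_x = \overline{\{x\}}$ is clopen in $|X|$, and $Y = |X| \setminus \bigcup_{x \in TP(X,f)\setminus\{\eta\}} U_x$ satisfies $f^{-1}(Y) = f(Y) = Y$. So $\mu$ lives on $Y$, and by definition of $Y$ every point $z \in Y$ has $\overline{\bigcup_{i\geq 0} f^{-i}(z)} = X$, i.e. the backward orbit of $z$ is Zariski dense. Now push $\mu$ forward by iterates: from $d_f^{-n}(f^n)^*\mu = \mu$ and $(f^n)^*\delta_z = \sum_{w \in f^{-n}(z)} m_{f^n}(w)\delta_w$, the measure $\mu = d_f^{-n}(f^n)^*\mu$ is, for each $n$, a convex combination (with the right normalization since weights in $f^{-n}(z)$ sum to $d_f^n$) of Dirac masses at points of $\bigcup_i \bigcup_{w \in f^{-n}(x_i)} \{w\}$. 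If any $x_i \neq \eta$, then $V := \overline{\{x_i\}}$ is a proper subvariety, $1_V$ is continuous on $|X|$, and $\int 1_V \, \mu = \int 1_V \, d_f^{-n}(f^n)^*\mu = d_f^{-n}(f_*^n 1_V)(x_i)$; I would bound the right-hand side. The main obstacle — and the step deserving the most care — is showing this quantity tends to $0$ (or is already too small to accommodate the atom $a_i > 0$): this is where one must use that the \emph{forward} orbit structure forces the preimages $f^{-n}(x_i)$ to spread out, combined with $\supp\mu \subseteq Y$. Concretely, I would argue that if $\mu(U_w) > 0$ for some atom $w = x_i \neq \eta$, then by invariance $\mu(U_{f^{-1}(w)}) \leq \mu(f^{-1}(U_w)) = d_f^{-1} f^*\mu(U_w)\cdot(\text{something})$ — tracking the mass as one pulls back — and iterate to contradict either finiteness of total mass or the descending chain condition on the chain $w > f^{-1}(w) > f^{-2}(w) > \cdots$ (valid when $w \notin TP(X,f)$, which holds since $w \in Y$ and $w \neq \eta$), analogously to Lemma \ref{lemfindnexp}. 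This forces every atom of $\mu$ to be $\eta$, hence $\mu = \delta_\eta$.
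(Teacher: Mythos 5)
Your setup matches the paper's: normalize to a probability measure, apply Theorem \ref{thmRadon} to write $\mu=\sum_i a_i\delta_{x_i}$ with all atoms in $Y$, recall that $Y\cap TP(X,f)\subseteq\{\eta\}$, and aim to rule out an atom $x_i\neq\eta$. But the decisive step --- why such an atom is impossible --- is exactly the part you leave open, and the two mechanisms you propose for it do not work. First, there is no ``descending chain $w> f^{-1}(w)> f^{-2}(w)>\cdots$'': the partial order on $|X|$ is specialization ($x\geq y$ iff $y\in\overline{\{x\}}$), and for a finite flat $f$ the points of $f^{-1}(w)$ have the same dimension as $w$ and are in general incomparable with $w$ (your own parenthetical only shows $f$ is order-preserving, i.e.\ $x\geq y\Rightarrow f(x)\geq f(y)$, not $f(x)\geq x$). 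So the descending chain condition used in Lemma \ref{lemfindnexp} gives you nothing here. Second, ``the preimages $f^{-n}(x_i)$ spread out'' is precisely the assertion to be proved, and a naive mass count does not settle it because the sets $f^{-n}(x_i)$ need not be pairwise disjoint.

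The argument the paper actually uses (and which you would need) is a comparison of the \emph{weights} of the atoms on both sides of $\mu=d_f^{-1}f^*\mu$. Since distinct atoms have disjoint $f$-preimages, the right-hand side has atoms exactly at $\bigcup_i f^{-1}(x_i)$ with weights $a_i m_f(y)/d_f\leq a_i$. Ordering the $a_i$ decreasingly and counting, for each value $a$, how many atoms of each measure have weight exactly $a$ (there are only finitely many of weight $\geq r$ for any $r>0$), one is forced to conclude that $f^{-1}(x_i)$ is a single point with $m_f=d_f$ for every $i$; hence for each $r>0$ the finite set $\{x_i: a_i\geq r\}$ is stable under $f^{-1}$, so every atom lies in $TP(X,f)\cap(Y\setminus\{\eta\})=\emptyset$, a contradiction. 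You start down this road in your middle paragraph (``comparing the coefficient of $\delta_{x_i}$ on both sides''), but you take $x_i$ maximal for the specialization order rather than maximal in weight, and then abandon the computation. As written, the proof has a genuine gap at its central step.
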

\proof
Assume that $\mu\neq \delta_{\eta}.$
We may assume that $\mu(\eta)=0$. Otherwise, we may replace $\mu$ by $\mu-\mu(\eta)\delta_{\eta}.$ 
By Theorem \ref{thmRadon}, one may write 
$$\mu=\sum_{i= 0}^ma_i\delta_{x_i}$$ where $m\in \Z_{\geq 0}\cup \{\infty\}$, $x_i$ are distinct points in $Y\setminus \{\eta\}$, $a_i> 0$ and $\sum_{i\geq 0}a_i=1.$
We have 
$$\mu=d_f^{-1}f^*\mu=\sum_{i=0}^m\sum_{y\in f^{-1}(x)}\frac{a_im_f(y)}{d_f} \delta_y.$$
Terms in the right hand side have distinct supports.

Assume that $a_i$ is decreasing. We claim that for every $i,$ $f^{-1}(x_i)$ is a single point. 
Otherwise, pick $l$ minimal such that $f^{-1}(x_l)$ is not a single point.
Assume that $s\geq 0$ is maximal such that $a_{l+s}=a_l.$
Think $\mu$ as a function $\mu: |X|\to [0,1]$ sending $x$ to $\mu(x)$. 
We have $\mu^{-1}(a_l)=s+1.$ On the other hand
$$(d_f^{-1}f^*\mu)^{-1}(a_l)=\{i=l,\dots,l+s|\,\, f^{-1}(x_i) \text{ is a single point}\}\leq s,$$
which is a contradiction.
Then we get $\mu=\sum_{i=0}^ma_i\delta_{f^{-1}(x_i)}.$
Because for every $r>0$, $\{i=0,\dots,m|\,\, a_i\geq r\}$ is finite,
all $x_i, i=0,\dots, m$ are contained in $TP(X,f)\cap (Y\setminus\{\eta\})=\emptyset.$ 
We get a contradiction.
\endproof

\medskip

\proof[Proof of Theorem \ref{thmequpullback}]
Let $x$ be a point in $X(\bk)$ with $\overline{\cup_{i\geq 0}f^{-i}(x)}=X.$
Let  $I_n, n\geq 0$ be a sequence of intervals in $\Z_{\geq 0}$ with $\lim_{n\to \infty}\# I_n=+\infty$.
Set $$\mu_n:=\frac{1}{\#I_n}(\sum_{i\in I_n}d_f^{-i}(f^i)^*\delta_{x})\in \sM^1(|X|).$$
Because $\sM^1(|X|)$ is compact, only need to show that for every convergence subsequence $\mu_{n_i}, i\geq 0$,
 $\mu_{n_i}\to \delta_{\eta}$ as $i\to \infty.$ Set $\mu:=\lim_{n\to \infty}\mu_{n_i}.$
 
 Then $$f^*\mu=\lim_{i\to \infty}f^*\mu_{n_i}=\lim_{i\to \infty}\frac{1}{\#I_n}(\sum_{j\in I_{n_i}}d_f^{-j}(f^{j+1})^*\delta_{x})$$
 $$\lim_{i\to \infty}d_f\mu_{n_i}+\lim_{i\to \infty}\frac{d_f}{\#I_n}(d_f^{-\max I_{n_i}-1}(f^{\max I_{n_i}+1})^*\delta_{x}-d_f^{-\min I_{n_i}}(f^{\min I_{n_i}})^*\delta_{x})$$
Because $d_f^{-\max I_{n_i}-1}(f^{\max I_{n_i}+1})^*\delta_{x}(|X|)=d_f^{-\min I_{n_i}}(f^{\min I_{n_i}})^*\delta_{x}(|X|)=1,$
we get $$f^*\mu=\lim_{i\to \infty}d_f\mu_{n_i}=d_f\mu.$$
 Because $x\in Y$, for every $n\geq 0$, $\Supp\, \mu_n\subseteq Y.$ So $\mu\subseteq Y.$
 We conclude the proof by Lemma \ref{lemtotinm}.
\endproof

\proof[Proof of Theorem \ref{thmcountprestr}]
Assume that $\bk(X)/f^*\bk(X)$ is separable. Let $x\in X(\bk)$ be a point with $\overline{\cup_{i\geq 0}f^{-i}(x)}=X.$
Pick $c\in (0,1]$
Because $$\#f^n(x)\leq \sum_{y\in f^{-n}(x)}m_{f^n}(y)=d_f^n,$$ we have 
$$\limsup_{n\to \infty} (S^n_c)^{1/n}\leq \limsup_{n\to \infty} \#f^n(x)^{1/n}\leq d_f.$$
We now prove the inequality in the other direction.

\medskip

By \cite[Theorem 2.1]{Gignac2014a} and \cite[Proposition 2.3]{Gignac2014a}, there is a proper Zariski closed subset $R$ of $X$, such that 
for every $y\in X(\bk)\setminus R,$ $m_f(y)=1.$ 
Set $$\mu_n:=\frac{1}{n}(\sum_{i=1}^{n}d_f^{-i}(f^i)^*\delta_{x})\in \sM^1(|X|).$$
By Theorem \ref{thmequpullback}, \begin{equation}\label{equmutoz}\lim_{n\to \infty}\mu_n=\delta_{\eta}.\end{equation}

\medskip

Set $D:=\{1,\dots, d_f\}$. Let $\Omega:=\sqcup_{n\geq 0}D^n$ be the set of words in $D$ of finite length. In particular $D^{0}=\{\emptyset\}.$
By induction, one may define a map $$\phi: \Omega\to \sqcup_{n\geq 0} f^{-n}(x)\subseteq \sqcup_{n\geq 0} X$$ such that 
\begin{points}
\item $\theta(D^n)=f^{-n}(x), $ in particular $\phi(\emptyset)=x.$
\item for every word $w_1\dots w_n\in D^n, n\geq 1,$  $$\theta(w_1\dots w_{n-1})=f(\theta(w_1\dots w_{n}));$$
\item for every $y\in f^{-n-1}(x)$ and $w_1\dots w_n\in D^n$ satisfying $\theta(w_1\dots w_{n})=f(y),$
$$\#\{w\in D|\,\, \theta(w_1\dots w_nw)=y\}=m_f(y).$$
\end{points}
By \cite[Proposition 2.5]{Gignac2014a}, for every $y\in f^{-n-1}(x)$, $m_{f^{n+1}}(y)=m_{f^n}(f(y))m_f(y).$
This implies that for every $y\in f^{-n}(x),$ $$\#\{\omega\in D^n|\,\, \theta(\omega)=y\}=m_{f^n}(y).$$
Define a function $A:\Omega\to (0,1]$ by $$A: \omega\in D^n\mapsto m_{f^n}(\theta(\omega))^{-1}.$$
We have 
\begin{points}
\item $\sum_{\omega\in D^n}A(\omega)=\#f^{-n}(x);$
\item for every $w_1\dots w_{n+1}\in D^{n+1}$, $$A(w_1\dots w_{n+1})=m_f(\theta(w_1\dots w_{n+1}))^{-1}A(w_1\dots w_{n}).$$
\end{points}
We have $A(\emptyset)=1$ and 
$$A(w_1\dots w_{n+1})\geq d_f^{-1_R(\theta(w_1\dots w_{n+1}))}A(w_1\dots w_{n}).$$
Then we have 
$$\prod_{\omega\in D^{n+1}}A(\omega)=\prod_{\omega\in D^{n}}\prod_{w\in D}A(\omega w)\geq \prod_{\omega\in D^{n}}\prod_{w\in D}d_f^{-1_R(\theta(w_1\dots w_{n+1}))}A(\omega)$$
$$=(\prod_{\omega\in D^{n+1}}d_f^{-1_R(\theta(\omega))})(\prod_{\omega\in D^{n}}A(\omega))^{d_f}=d_f^{-\int1_R(f^{n+1})^*\delta_x}(\prod_{\omega\in D^{n}}A(\omega))^{d_f}.$$
Set $B_n:=\log_{d_f}\prod_{\omega\in D^{n}}A(\omega).$
We get $$B_{n+1}/d_f^{n+1}\geq -d_f^{-n-1}\int1_R(f^{n+1})^*\delta_x+B_{n}/d_f^n.$$
Then we get 
$$B_n/d_f^{n}\geq \sum_{i=1}^{n}-d_f^{-i}\int1_R(f^{i})^*\delta_x=-n\int 1_R\mu_n.$$

For every $n\geq 0$, pick $E_n\subseteq f^{-n}(x)$, such that $$\sum_{y\in E_n}m_{f^n}(y)\geq cd_f^n$$ and $\#E_n=S^n_c.$
So $$\#\theta^{-1}(E_n)=\sum_{y\in E_n}m_{f^n}(y)\geq cd_f^n.$$

By Inequality of arithmetic and geometric means,  we have 
$$S^n_c=\sum_{\omega\in \theta^{-1}(E_n)}A(\omega)\geq \#\theta^{-1}(E_n)(\prod_{\omega\in \theta^{-1}(E_n)}A(\omega))^{\frac{1}{\#\theta^{-1}(E_n)}}$$
$$\geq cd_f^n(\prod_{\omega\in \theta^{-1}(E_n)}A(\omega))^{\frac{1}{cd_f^n}}\geq cd_f^n(\prod_{\omega\in D^n}A(\omega))^{\frac{1}{cd_f^n}}$$
$$=cd_f^{n+B_n/cd_f^{n}}\geq cd_f^{n(1-c^{-1}\int 1_R\mu_n)}.$$


So $(S^n_c)^{1/n}\geq c^{1/n}d_f^{1-\int 1_R\mu_n}.$ By Equality \ref{equmutoz},
$$\liminf_{n\geq 0}(S^n_c)^{1/n}\geq d_f,$$ whcih concludes the proof.
\endproof

\medskip

\proof[Proof of Theorem \ref{thmseqd}]
Set $d_X:=\dim X.$
Assume that $\bk(X)/f^*\bk(X)$ is separable and $$\la_{\dim X}(f)>\max_{1\leq i\leq \dim X-1} \la_i.$$
Let $x$ be a point in $X(\bk)$ with $\overline{\cup_{i\geq 0}f^{-i}(x)}=X.$

\medskip

We first show that for every irreducible subvariety $V$ of $X$ of $\dim V=d_V<d_X$, 
\begin{equation}\label{equintvnum}\limsup_{n\to \infty}\#(f^{-n}(x)\cap V)^{1/n}\leq \la_{d_V}.
\end{equation}
Let $Y$ be a normal and projective variety containing $X$ as an Zariski dense open subset. 
Let $Z$ be the Zariski closure of $V$ in $W.$ Let $\sI_Z$ be the ideal sheaf associated to $Z.$
Let $H$ be a very ample divisor on $Y$ such that 
$\sO(H)\otimes \sI_Z$ is generated by global sections.

For every $n\geq 0,$ consider the following commutative diagram
\[
\xymatrixcolsep{3.5pc}
\xymatrix{
	\Gamma_n\ar[d]_{\pi_1^n}\ar[dr]^{\pi_2^n}\\
	 Y \ar@{-->}[r]_{f^n}	& Y
}
\]
where $\pi_1^n$ is birational and it is an isomorphism above $X.$
There are $H_1,\dots, H_{d_X-d_V}\in |H|$ such that the intersection of $H_1,\dots, H_{d_X-d_V}$ is proper,
$V$ is an irreducible component of $\cap_{i=1}^{d_X-d_V}H_i$ and $V$ is the unique irreducible component meeting $f^{-n}(x)\cap V.$
Take $H_1',\dots, H_{d_V}'$ general in those elements of $|H|$ containing $x.$
Then the intersection of $H_1',\dots, H_{d_V}'$ and $f(V)$ at $x$ is proper. 
Since $f$ is finite, the intersection of $f^{*}(H_1'),\dots, f^*H_{d_V}'$ and $V$ is proper at every $y\in f^{-n}(x)\cap V.$

We have 
$$(\pi_1^n)^{-1}(f^{-n}(x)\cap V)\subseteq (\cap_{i=1}^{d_X-d_V}(\pi_1^n)^*H_i)\cap(\cap_{i=1}^{d_V}(\pi_2^n)^*H_i'),$$
and every point $y\in (\pi_1^n)^{-1}(f^{-n}(x)\cap V)$ is isolated in $(\cap_{i=1}^{d_X-d_V}(\pi_1^n)^*H_i)\cap(\cap_{i=1}^{d_V}(\pi_2^n)^*H_i').$
By \cite[Lemma 3.3]{Jia2021}, 
$$(H^{d_X-d_V}\cdot(f^{n})^*H^{d_V})=((\pi_1^{n})^*H_1\cdot \dots \cdot (\pi_1^{n})^*H_{d_X-d_V}\cdot (\pi_2^{n})^*H_1'\cdot \dots \cdot (\pi_2^{n})^*H_{d_V}')$$
$$\geq \#(\pi_1^n)^{-1}(f^{-n}(x)\cap V)=\#(f^{-n}(x)\cap V).$$
Then we get
$$\limsup_{n\to \infty}\#(f^{-n}(x)\cap V)^{1/n}\leq \lim_{n\to \infty}(H^{d_X-d_V}\cdot(f^{n})^*H^{d_V})^{1/n}=\la_{d_V}.$$

\medskip

Now we only need to show $$\lim_{n\to \infty}d_f^{-n}(f^n)^*\delta_x=\delta_{\eta}.$$
Because $\sM^1(|X|)$ is compact, only need to show that for every convergence subsequence $d_f^{-n_i}(f^{n_i})^*\delta_{x}, i\geq 0$,
$\lim_{i\to \infty}d_f^{-n_i}(f^{n_i})^*\delta_{x}=\delta_{\eta}.$
Set $\mu:=\lim_{i\to \infty}d_f^{-n_i}(f^{n_i})^*\delta_{x}.$
By Theorem \ref{thmRadon}, we may write 
$$\mu=\sum_{i\geq  0}^ma_i\delta_{x_i}$$ where $m\in \Z_{\geq 0}\cup \{\infty\}$, $x_i$ are distinct points, $a_i> 0$ and $\sum_{i\geq 0}a_i=1.$
Assume that $\mu\neq \delta_{\eta}$. Then we may assume that $a_0>0$ and $x_0\neq \eta.$
Set $r:=\overline{\{x_0\}}<d_X.$

Then $$\int 1_{U_{x_0}}\mu\geq \int 1_{U_{x_0}}a_0\delta_{x_0}=a_0.$$
Pick $c\in (0,a_0).$ Then there is $N\geq 0$ such that for every $i\geq N$, 
$$\frac{\sum_{y\in f^{-n_i}(x)\cap \overline{\{x_0\}}}m_{f^{n_i}}(y)}{d_f^{n_i}}=\int 1_{U_{x_0}}d_f^{-n_i}(f^{n_i})^*\delta_{x}\geq c.$$
So $\sum_{y\in f^{-n_i}(x)\cap \overline{\{x_0\}}}m_{f^{n_i}}(y)\geq cd_f^{n_i},$ then 
$\#(f^{-n_i}(x)\cap \overline{\{x_0\}})\geq S_c^{n_i}.$
By Theorem \ref{thmcountprestr} and Inequality \ref{equintvnum},  we get 
$$d_f>\la_{r}\geq \limsup_{i\to \infty}(\#(f^{-n_i}(x)\cap \overline{\{x_0\}}))^{1/n_i}\geq \liminf_{i\to \infty}(S_c^{n_i})^{1/n_i}=d_f,$$
which is a contradiction.
\endproof
\subsection{Berkovich spaces}\label{subsecberko}
 In this section, $\bk$ is a complete nonarchimedean valued field with norm $|\cdot|$.  
 See \cite{Berkovich1990} and \cite{Berkovich1993} for basic theory of Berkovich spaces.

 Let $X$ be a variety over $\bk.$
Recall that, as a topological space, Berkovich's analytification of $X$ is 
$$X^{\an}:=\{(x,|\cdot|_x)|\,\, x\in X, |\cdot|_x \text{ is a norm on }\kappa(x) \text{ which extends } |\cdot| \text{ on }\bk\},$$
endowed with the weakest topology such that
\begin{points}
\item $\tau: X^{\an}\to X$ by $(x,|\cdot|_x)\mapsto x$ is continuous;
\item for every Zariski open $U\subseteq X$ and $\phi\in O(U)$,
the map $|\phi|:\tau^{-1}(U)\to [0+\infty)$ sending $(x,|\cdot|_x)$ to $|\phi|_x$ is continuous.
\end{points}

Let $\sM(X^{\an})$ be the space of Radon measures on $X^{\an}$ and let $\sM^1(X^{\an})$ be the space of probability Radon measures on $X^{\an}.$

\subsection{Trivial norm case}
Assume that $|\cdot|$ is the trivial norm.

For every $x\in X$, let $|\cdot|_{x,0}$ be the trivial norm on $\kappa(x).$
Then we have an embedding
$\sigma: X\to X^{\an}$ sending 
$x\in X$ to $(x,|\cdot|_{x,0}).$
We have $\tau\circ\sigma=\id.$
One may check that the constructible topology on $X$ is exact the topology induced by the topology on $X^{\an}$ and the embedding $\sigma.$
Because $|X|$ is compact, $\sigma(X)$ is closed in $X^{\an}$ and $\sigma: |X|\to \sigma(|X|)$ is a homeomorphism.
\begin{rem}We note that, if $X$ is endowed with the constructible topology,  $\tau: X^{\an}\to |X|$ is no longer continuous.
\end{rem}

Using the embedding $\sigma$, Corollary \ref{corgenericseqence} can be translated to a statement on $X^{\an}.$
\begin{cor}[=Corollary \ref{corgenericseqence}]A sequence $x_n\in X, n\geq 0$ is generic if and only if in $\sM(X^{\an})$
$$\lim_{n\to \infty}\delta_{\sigma(x_n)}=\delta_{\sigma(\eta)}.$$ 
\end{cor}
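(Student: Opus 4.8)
The plan is to deduce the statement directly from Corollary~\ref{corgenericseqence} by transporting the weak-$\ast$ convergence through the embedding $\sigma$. Recall from the discussion above that $\sigma(|X|)$ is a \emph{compact} subset of $X^{\an}$ (being the continuous image of the compact space $|X|$), that it is closed in $X^{\an}$, and that $\sigma\colon|X|\to\sigma(|X|)$ is a homeomorphism. Since $\sigma$ is a topological embedding, $\sigma_*\delta_x=\delta_{\sigma(x)}$ for every $x\in|X|$, in particular for $x=\eta$. So it suffices to prove that the pushforward
$$\sigma_*\colon \sM(|X|)\longrightarrow \sM(X^{\an})$$
is a homeomorphism onto its image, the set $\sM_{\sigma(|X|)}(X^{\an})$ of Radon measures on $X^{\an}$ supported on $\sigma(|X|)$, both sides carrying the weak-$\ast$ topology; then $\delta_{\sigma(x_n)}\to\delta_{\sigma(\eta)}$ in $\sM(X^{\an})$ is equivalent to $\delta_{x_n}\to\delta_{\eta}$ in $\sM(|X|)$, and Corollary~\ref{corgenericseqence} finishes the proof.

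To establish this, recall that $X^{\an}$ is a locally compact Hausdorff space, so the weak-$\ast$ topology on $\sM(X^{\an})$ is the one induced by pairing against $C_c(X^{\an})$, and the weak-$\ast$ topology on $\sM(|X|)$ is induced by pairing against $C(|X|)$. Continuity of $\sigma_*$ is immediate: for $\phi\in C_c(X^{\an})$ one has $\phi\circ\sigma\in C(|X|)$ and $\int\phi\,d(\sigma_*\mu)=\int(\phi\circ\sigma)\,d\mu$. For the converse direction, suppose $\mu_n=\sigma_*\nu_n$ converges to $\mu\in\sM(X^{\an})$. For any compact $K\subseteq X^{\an}\setminus\sigma(|X|)$, Urysohn's lemma on the locally compact Hausdorff space $X^{\an}$ gives $\phi\in C_c(X^{\an})$ with $\phi\equiv1$ on $K$ and $\phi\equiv0$ on $\sigma(|X|)$; then $\mu(K)\le\int\phi\,d\mu=\lim_n\int\phi\,d\mu_n=0$, and inner regularity yields $\mu\big(X^{\an}\setminus\sigma(|X|)\big)=0$, so $\mu=\sigma_*\nu$ for a unique $\nu\in\sM(|X|)$. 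Finally, since $\sigma(|X|)$ is compact in the locally compact Hausdorff space $X^{\an}$, every $\psi\in C(|X|)$ extends — transport by $\sigma^{-1}$ to $\sigma(|X|)$, then apply the Tietze extension theorem together with an Urysohn cutoff — to some $\widetilde\psi\in C_c(X^{\an})$, whence $\int\psi\,d\nu_n=\int\widetilde\psi\,d\mu_n\to\int\widetilde\psi\,d\mu=\int\psi\,d\nu$ and $\nu_n\to\nu$ in $\sM(|X|)$. Thus $\sigma_*$ is a homeomorphism onto $\sM_{\sigma(|X|)}(X^{\an})$, and in particular $\delta_{\sigma(x_n)}\to\delta_{\sigma(\eta)}$ if and only if $\delta_{x_n}\to\delta_{\eta}$, which combined with Corollary~\ref{corgenericseqence} gives the claim.

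The only mildly delicate point — and the one I would be most careful about — is matching the two weak-$\ast$ topologies: one must rule out mass escaping to infinity in $X^{\an}$ (which holds because all measures in play are supported on the single compact set $\sigma(|X|)$) and check that every test function on $\sigma(|X|)$ lifts to a compactly supported test function on $X^{\an}$. Both are routine consequences of $X^{\an}$ being locally compact and Hausdorff; once they are in place, the equivalence of the two convergence statements, and hence the corollary, follows at once.
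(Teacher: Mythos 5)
Your proposal is correct and follows the same route the paper intends: the paper offers no proof beyond the remark that the statement is obtained from Corollary~\ref{corgenericseqence} by transport through the closed embedding $\sigma$, and you simply supply the routine functional-analytic details (that $\sigma_*$ identifies $\sM(|X|)$ with the Radon measures on $X^{\an}$ carried by the compact set $\sigma(|X|)$, compatibly with the weak-$\ast$ topologies). Your attention to the possible non-compactness of $X^{\an}$ and to extending test functions from $\sigma(|X|)$ to $C_c(X^{\an})$ is exactly the right care to take, and nothing is missing.
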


\medskip

Let $f: X\to X$ be a finite flat morphism. It induces a morphism $f^{\an}: X^{\an}\to X^{\an}.$
We have $$f^{\an}\circ \sigma=\sigma\circ f \text{ and } \tau\circ f^{\an}=f\circ \tau.$$
According to \cite[Lemma 6.7]{Gignac2014a}, there is a natural pullback ${f^{\an}}^*: \sM(X^{\an})\to \sM(X^{\an}).$ 
One may check that the following diagram is commutative.

\[
\xymatrixcolsep{3.5pc}
\xymatrix{
	\sM(|X|)\ar[d]^{\sigma_*}\ar[r]^{f^*}&\sM(|X|)\ar[d]^{\sigma_*}\\
	 \sM(X^{\an}) \ar[r]_{{f^{\an}}^*}	& \sM(X^{\an})
}
\]

Then we may translate Theorem \ref{thmseqd} to a statement on $X^{\an}.$
\begin{thm}[=Theorem \ref{thmseqd}]
Let $f: X\to X$ be a flat and finite endomorphism of a quasi-projective variety. Assume that 
\begin{equation}\label{equladdom}d_f:=\la_{\dim X}(f)>\max_{1\leq i\leq \dim X-1} \la_i.
\end{equation}
If the field extension $\bk(X)/f^*\bk(X)$ is separable, then for every $x\in X(\bk)$ with $\overline{\cup_{i\geq 0}f^{-i}(x)}=X,$
 $$\lim_{n\to \infty}d_f^{-n}(f^n)^*\delta_{\sigma(x)}=\delta_{\sigma(\eta)}.$$
\end{thm}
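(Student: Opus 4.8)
The plan is to deduce this statement directly from Theorem \ref{thmseqd} by pushing forward along the closed embedding $\sigma\colon |X|\hookrightarrow X^{\an}$. Recall from the discussion above that $\sigma$ is a homeomorphism onto its image $\sigma(|X|)$, which is closed (hence compact) in $X^{\an}$; that $\sigma_*\delta_y=\delta_{\sigma(y)}$ for every $y\in|X|$; that $f^{\an}\circ\sigma=\sigma\circ f$; and that the square relating $f^*$ on $\sM(|X|)$ and ${f^{\an}}^*$ on $\sM(X^{\an})$ via $\sigma_*$ commutes.

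First I would record that $\sigma_*\colon\sM(|X|)\to\sM(X^{\an})$ is well defined and continuous for the weak-$*$ topologies: since $\sigma(|X|)$ is compact, $\sigma_*\mu$ is a Radon measure on $X^{\an}$ with total mass $\mu(|X|)$ and support contained in $\sigma(|X|)$; moreover $\sigma$ is proper, so for $\phi\in C_c(X^{\an})$ one has $\phi\circ\sigma\in C(|X|)$ and $\int\phi\,(\sigma_*\mu)=\int(\phi\circ\sigma)\,\mu$, whence $\mu_k\to\mu$ implies $\sigma_*\mu_k\to\sigma_*\mu$. In particular $\sigma_*$ sends $\sM^1(|X|)$ into $\sM^1(X^{\an})$.

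Next I would iterate the commutative square. By functoriality of analytification, $(f^{\an})^n=(f^n)^{\an}$, and the pullback ${f^{\an}}^*$ on Radon measures (defined via the multiplicity/trace formula of \cite[Lemma 6.7]{Gignac2014a}) is compatible with composition and, by the displayed diagram above, matches $f^*$ on $\sM(|X|)$ under $\sigma_*$. Hence for every $n\geq 0$,
$$
	d_f^{-n}\,(f^{\an})^{*n}\,\delta_{\sigma(x)}
	=d_f^{-n}\,(f^{\an})^{*n}\,\sigma_*\delta_x
	=\sigma_*\!\left(d_f^{-n}\,(f^n)^*\delta_x\right).
$$

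Finally, the hypotheses assumed here are exactly those of Theorem \ref{thmseqd}, so $d_f^{-n}(f^n)^*\delta_x\to\delta_\eta$ in $\sM^1(|X|)$. Applying the continuous map $\sigma_*$ and using $\sigma_*\delta_\eta=\delta_{\sigma(\eta)}$ yields $d_f^{-n}(f^{\an})^{*n}\delta_{\sigma(x)}\to\delta_{\sigma(\eta)}$, which is the claim. The only point needing care is the compatibility of ${f^{\an}}^*$ with iteration and with $\sigma_*$, but this is precisely the content of the commutative diagram established above together with the functoriality of $(-)^{\an}$, so no genuine obstacle arises; everything of substance is already contained in Theorem \ref{thmseqd}.
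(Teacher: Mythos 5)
Your proposal is correct and is exactly the route the paper intends: the statement is presented as the translation of Theorem \ref{thmseqd} through the closed embedding $\sigma$, using the continuity of $\sigma_*$ and the commutative square $\sigma_*\circ f^*=(f^{\an})^*\circ\sigma_*$. You have simply spelled out the details the paper leaves implicit, so there is nothing to add.
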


\subsection{Reduction}\label{subsectionreduction}
Let $\bk^{\circ}$ be the valuation ring of $\bk$ and $\bk^{\circ\circ}$ the maximal ideal of $\bk^{\circ}.$
Set $\widetilde{\bk}:=\bk^{\circ}/\bk^{\circ\circ}$ the residue field of $\bk.$ 
Let $\sX$ be a flat projective scheme over $\bk^{\circ}.$ Denote by $X_0$ its special fiber, it is a (maybe reducible) variety over $\widetilde{\bk}.$
Let $X$ be the generic fiber of $\sX.$ Let $Y_1,\dots, Y_m$ be the irreducible components of $X_0$ and $\eta_i,i=1,\dots,m$ the generic points of $Y_i.$
Set $\xi_i$ the unique point in $\red^{-1}(\eta_i).$

\medskip

Denote by $\red: X^{\an}\to X_0$ the reduction map. It is anti-continuous i.e. for every Zariski open subset $U$ of $X_0$,  $\red^{-1}(U)$ is closed.
In particular, for constructible topology on $X_0$, $\red: X^{\an}\to |X_0|$ Borel measurable. 

\medskip

For every $\mu\in \sM(X^{\an})$,  we may define its push forward $\red_*\mu\in \sM(|X_0|)$ as follows: 
For every $\phi\in C(|X_0|)$, $$\int \phi\, \red_*\mu:=\int (\red^*\phi)\, \mu.$$
Because $\red^*\phi$ is Borel measurable and bounded, $\int (\red^*\phi) \mu$ is well defined and we have $|\int (\red^*\phi) \mu|\leq \|\phi\|_{\infty}\mu(X^{\an}).$
We note that, in general, $\red_*: \sM(X^{\an})\to \sM(|X_0|)$  is not continuous. 

\begin{exe}\label{exerednotmeacon}
Let $\sX=\P^N_{\bk^{\circ}}.$ 
Let $x_n, n\geq 0$ be the Gauss point of the polydisc $\{|T_i|\leq 1-1/(n+2), i=1,\dots, N\}\subseteq (\A^N)^{\an}\subseteq (\P^N)^{\an}.$
We have $\delta_{x_n}\to \xi_1$ as $n\to \infty$, but for every $n\geq 0$, 
$$\red_*\delta_{x_n}=\delta_{\red(x_n)}=\delta_{[1:0:\dots:0]}\neq \delta_{\eta_1}=\red_*\delta_{\xi_1}.$$
\end{exe}

\begin{pro}\label{prolimgen}Let $\mu_n\in \sM^1(X^{\an}), n\geq 0$ be a sequence of probability Radon measures on $X^{\an}.$
Assume that there are $a_i\geq 0, i=1,\dots,m$ with $\sum_{i=1}^ma_i=1$ such that 
$$\red_*(\mu_n)\to \sum_{i=1}^m a_i\delta_{\eta_i}$$
as $n\to \infty.$
Then we have $$\mu_n\to \sum_{i=1}^m a_i\delta_{\xi_i}$$
as $n\to \infty.$
\end{pro}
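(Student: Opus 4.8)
The plan is to exploit the weak-$\ast$ compactness of $\sM^1(X^{\an})$. Since $\sX$ is projective over $\bk^{\circ}$, its generic fibre $X$ is projective over $\bk$, so $X^{\an}$ is compact and $\sM^1(X^{\an})$ is compact Hausdorff for the weak-$\ast$ topology. A sequence in a compact Hausdorff space converges to a point $\nu$ as soon as every convergent subnet has limit $\nu$, so it is enough to fix a subnet $\mu_{n_{\alpha}}\to\mu$ and prove that $\mu=\nu$, where $\nu:=\sum_{i=1}^{m}a_{i}\delta_{\xi_{i}}$.

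I would first establish a mass-vanishing statement: if $Z\subseteq X_{0}$ is Zariski-closed and contains none of the generic points $\eta_{i}$, then $\mu_{n}(\red^{-1}(Z))\to 0$. Indeed $\red^{-1}(Z)$ is open in $X^{\an}$ by anti-continuity of $\red$, while $|Z|$ is clopen in $|X_{0}|$ (every Zariski-closed set is clopen for the constructible topology), so $1_{|Z|}\in C(|X_{0}|)$; by the definition of $\red_{*}$ and the hypothesis $\red_{*}\mu_{n}\to\sum_{i}a_{i}\delta_{\eta_{i}}$ we get $\mu_{n}(\red^{-1}(Z))=\int 1_{|Z|}\,\red_{*}\mu_{n}\to\sum_{i}a_{i}1_{|Z|}(\eta_{i})=0$ since $\eta_{i}\notin Z$ for all $i$. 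A convergent subnet of a convergent real sequence has the same limit, so $\mu_{n_{\alpha}}(\red^{-1}(Z))\to 0$, and the portmanteau inequality for open sets gives $\mu(\red^{-1}(Z))\le\liminf_{\alpha}\mu_{n_{\alpha}}(\red^{-1}(Z))=0$. Applying this with $Z:=\overline{\{\red(\zeta)\}}$ for an arbitrary $\zeta\in X^{\an}\setminus\{\xi_{1},\dots,\xi_{m}\}$ — this $Z$ contains no $\eta_{i}$, because $\red(\zeta)\ne\eta_{i}$ (as $\red^{-1}(\eta_{i})=\{\xi_{i}\}$) and $Y_{i}\subseteq\overline{\{\red(\zeta)\}}$ would force $\overline{\{\red(\zeta)\}}=Y_{i}$ hence $\red(\zeta)=\eta_{i}$ — and noting $\zeta\in\red^{-1}(Z)$, we conclude $\zeta\notin\Supp\mu$. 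Thus $\Supp\mu\subseteq\{\xi_{1},\dots,\xi_{m}\}$, and $\mu$ being a probability measure, $\mu=\sum_{i=1}^{m}b_{i}\delta_{\xi_{i}}$ with $b_{i}\ge 0$ and $\sum_{i}b_{i}=1$.

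It remains to identify the $b_{i}$. For each $i$ the open set $\red^{-1}(Y_{i})$ contains $\xi_{i}$ and contains no $\xi_{j}$ with $j\ne i$ (because $\eta_{j}\in Y_{i}$ would force $j=i$), so $\mu(\red^{-1}(Y_{i}))=b_{i}$; on the other hand $1_{|Y_{i}|}\in C(|X_{0}|)$ and $1_{|Y_{i}|}(\eta_{j})$ equals $1$ if $j=i$ and $0$ otherwise, so $\mu_{n}(\red^{-1}(Y_{i}))=\int 1_{|Y_{i}|}\,\red_{*}\mu_{n}\to a_{i}$, hence also along the subnet. The portmanteau inequality then yields $b_{i}=\mu(\red^{-1}(Y_{i}))\le\liminf_{\alpha}\mu_{n_{\alpha}}(\red^{-1}(Y_{i}))=a_{i}$, and summing over $i$ gives $1=\sum_{i}b_{i}\le\sum_{i}a_{i}=1$. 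Therefore $b_{i}=a_{i}$ for every $i$, i.e. $\mu=\nu$, which finishes the proof.

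The one genuine obstacle is that $\red_{*}\colon\sM(X^{\an})\to\sM(|X_{0}|)$ is \emph{not} weak-$\ast$ continuous (Example \ref{exerednotmeacon}), so one cannot simply push the limit measure forward and read off the answer. The argument circumvents this by using only the one-sided portmanteau estimate for the open preimages $\red^{-1}(Z)$ and $\red^{-1}(Y_{i})$, and then closing the inequalities by invoking that all the measures in play have total mass $1$. The remaining ingredients — clopen-ness of Zariski-closed subsets in the constructible topology, anti-continuity of $\red$, and the identification $\red^{-1}(\eta_{i})=\{\xi_{i}\}$ — are all available from the setup.
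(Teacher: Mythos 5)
Your proof is correct and follows essentially the same route as the paper's: weak-$\ast$ compactness of $\sM^1(X^{\an})$, containment of the support of any limit in $\{\xi_1,\dots,\xi_m\}$ via anti-continuity of $\red$ and the clopen-ness of Zariski-closed subsets of $|X_0|$, and identification of the coefficients by one-sided inequalities forced into equalities because all masses sum to $1$. The only cosmetic differences are that your portmanteau inequality for open sets is exactly the paper's explicit Urysohn-plus-inner-regularity argument written as a black box, and that in the last step you test against the open sets $\red^{-1}(Y_i)$ to get $b_i\le a_i$ while the paper tests against functions squeezed between $\red^{-1}(Y_i\setminus\bigcup_{j\ne i}Y_j)$ and $\red^{-1}(Y_i)$ to get $b_i\ge a_i$.
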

\proof
Because $X^{\an}$ is compact, $\sM^1(X^{\an})$ is weak-$\ast$ compact. So we may assume that 
$$\lim_{n\to \infty}\mu_n=\mu$$ for some $\mu\in \sM^1(X^{\an}).$
We first show that $\Supp\, \mu\subseteq \{\xi_1\dots,\xi_m\}.$
Otherwise $\mu(X^{\an}\setminus \{\xi_1\dots,\xi_m\})=1.$ Then there is a compact subset $K$ of $X^{\an}\setminus \{\xi_1\dots,\xi_m\}$ such that $\mu(K)>0.$
For every $x\in K$, set $V_x:=\red^{-1}(\overline{\red(x)}).$ It is an open neighborhood of $x$ in $X^{\an}\setminus \{\xi_1\dots,\xi_m\}.$
Because $K$ is compact, there is one $x\in K$ such that $\mu(V_x)>0.$ 
Set $Z:=\overline{\red(x)}.$  There is a compact subset $S\subseteq V_x$ such that $\mu(S)>0.$
By Urysohn's Lemma, there is a continuous function $\chi: X^{\an}\to [0,1]$ such that $\chi|_S=1$ and $\chi|_{X^{\an}\setminus V_x}=0.$
Then we have 
$$0=\lim_{n\to \infty}\int 1_Z\,\red_*\mu_n=\lim_{n\to \infty}\int (\red^*1_Z)\,\mu_n=\lim_{n\to \infty}\int 1_{V_x}\,\mu_n$$
$$\geq \lim_{n\to \infty}\int \chi\,\mu_n=\int \chi \mu\geq \mu(S)>0,$$
which is a contradiction.

Now  we may write $\mu=\sum_{i=1}^m b_i\delta_{\xi_i}$ with $b_i\geq 0$ and $\sum_{i=1}^m b_i=1.$
For each $i=1,\dots,m$,  set $U_i:=Z_i\setminus (\cup_{j\neq i}Z_j).$
Then $\red^{-1}(U_i)$ is a closed subset contained in the open subset $\red^{-1}(Z_i).$
By Urysohn's Lemma, there is a continuous function $\chi_i: X^{\an}\to [0,1]$ such that $\chi|_{\red^{-1}(U_i)}=1$ and $\chi|_{X^{\an}\setminus \red^{-1}(Z_i)}=0.$
Then we have 
$$b_i=\int\chi_i\mu=\lim_{n\to \infty}\int \chi_i\mu_n$$
$$\geq \lim_{n\to \infty}\mu_n(\red^{-1}(U_i))=\lim_{n\to \infty}\int 1_{U_i}\,\red_*\mu_n$$
$$=\int 1_{U_i}\,(\sum_{j=1}^m a_j\delta_{\eta_j})=a_i.$$
Because $\sum_{i=1}^{m}b_i=\sum_{i=1}^m a_i=1$, we get $b_i=a_i$ for every $i=1,\dots,m.$
This concludes the proof.
\endproof

Now assume that $X_0$ is irreducible and smooth. Denote by $\eta$ the generic point of $X_0$ and $\xi$ the unique point in $\red^{-1}(\eta).$
Let $F:\sX\to \sX$ be a finite endomorphism. Denote by $f,f_0$ the restriction of $F$ to $X,X_0.$
We note that for $i=0,\dots, \dim X$, one has $\la_i(f)=\la_i(f_0).$

\medskip

By Theorem \ref{thmseqd} and Proposition \ref{prolimgen}, we get the following equidistribution result for endomorphisms of good reductions.
\begin{cor}\label{corseqdber}
Assume that $$d_f:=\la_{\dim X}(f)>\max_{1\leq i\leq \dim X-1} \la_i.$$
If the field extension $\widetilde{\bk}(X_0)/f_0^*\widetilde{\bk}(X_0)$ is separable, then for every $x\in X(\bk)$ with $\overline{\cup_{i\geq 0}f_0^{-i}(\red{x})}=X_0,$
 $$\lim_{n\to \infty}d_f^{-n}(f^n)^*\delta_x=\delta_{\xi}.$$
\end{cor}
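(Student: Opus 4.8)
The plan is to transfer the equidistribution to the special fiber using Theorem~\ref{thmseqd}, and then lift it back to $X^{\an}$ using Proposition~\ref{prolimgen}. First I would check that Theorem~\ref{thmseqd} applies to $f_0\colon X_0\to X_0$. Since $\sX$ is projective over $\bk^\circ$, the special fiber $X_0$ is a projective variety, and it is smooth, hence regular and Cohen--Macaulay; as $F$ is finite so is $f_0$, and $f_0$ is dominant (this is built into the equality $\la_i(f)=\la_i(f_0)$), hence surjective, so $f_0$ is flat by miracle flatness. Because $\sX$ is flat over the one--dimensional ring $\bk^\circ$ we have $\dim X_0=\dim X$, and since $\la_i(f_0)=\la_i(f)$ for all $i$, the hypothesis $d_f=\la_{\dim X}(f)>\max_{1\le i\le\dim X-1}\la_i$ translates verbatim into the corresponding inequality for $f_0$; the separability hypothesis likewise transfers. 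Finally, set $y:=\red(x)$: as $\sX$ is proper over $\bk^\circ$, the $\bk$--point $x$ extends (valuative criterion) to a section $\Spec\bk^\circ\to\sX$ whose special point is $y\in X_0(\widetilde\bk)$, and $\overline{\bigcup_{i\ge 0}f_0^{-i}(y)}=X_0$ by assumption. Theorem~\ref{thmseqd} then gives
\[
\lim_{n\to\infty}d_f^{-n}(f_0^n)^*\delta_y=\delta_\eta\qquad\text{in }\sM(|X_0|).
\]

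Next I would relate the sequence $\mu_n:=d_f^{-n}(f^n)^*\delta_x\in\sM^1(X^{\an})$ — a probability measure, since the analytic pullback scales the total mass by the topological degree $d_f$ — to the one just obtained. The crucial claim is the identity $\red_*(\mu_n)=d_f^{-n}(f_0^n)^*\delta_y$ for all $n\ge 0$, which I would deduce from the compatibility $\red_*\circ(f^{\an})^*=f_0^*\circ\red_*$ on $\sM(X^{\an})$, together with $\red_*\delta_x=\delta_y$ and $\red\circ f^{\an}=f_0\circ\red$ (the last because $F$ is a $\bk^\circ$--morphism). By duality — extending the defining identity for $(f^{\an})^*$ from continuous to bounded Borel test functions, which is legitimate for Radon measures by dominated convergence — this compatibility reduces to the pointwise equality $f^{\an}_*(\red^*\psi)=\red^*(f_{0*}\psi)$ for $\psi\in C(|X_0|)$, i.e. to the conservation of local degrees under specialization: for $z\in X^{\an}$ and $w\in f_0^{-1}(\red z)$,
\[
\sum_{\substack{z'\in(f^{\an})^{-1}(z)\\ \red(z')=w}}m_{f^{\an}}(z')=m_{f_0}(w).
\]
(It suffices to know this when $z$ is a rigid point, which is the only case used, since all iterated preimages of $x$ are rigid.) I expect this multiplicity identity — essentially that good reduction is compatible with taking preimages counted with multiplicity, which should follow from $F$ being finite and $\sX$ flat over $\bk^\circ$, in the spirit of \cite{Gignac2014a} — to be the main obstacle.

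Granting it, $\red_*(\mu_n)=d_f^{-n}(f_0^n)^*\delta_y\to\delta_\eta$ as $n\to\infty$. Since $X_0$ is irreducible, Proposition~\ref{prolimgen} (with $m=1$, $\eta_1=\eta$, $\xi_1=\xi$, $a_1=1$) yields $\mu_n\to\delta_\xi$, that is,
\[
\lim_{n\to\infty}d_f^{-n}(f^n)^*\delta_x=\delta_\xi,
\]
which is the assertion. The only genuinely new input beyond Theorem~\ref{thmseqd} and Proposition~\ref{prolimgen} is the reduction/pullback compatibility above; everything else is bookkeeping with the valuative criterion and miracle flatness.
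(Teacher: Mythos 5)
Your proposal follows exactly the paper's route: apply Theorem~\ref{thmseqd} to $f_0$ on the special fiber (justifying flatness of $f_0$ by smoothness of $X_0$ and miracle flatness), transport the conclusion through the reduction map via the compatibility $\red_*\circ (f^{\an})^*=f_0^*\circ\red_*$, and conclude with Proposition~\ref{prolimgen}. The one step you flag but do not prove --- conservation of local multiplicities under specialization --- is precisely the step the paper also leaves implicit (its entire proof is the single sentence citing the two results), so your write-up is, if anything, more explicit than the original.
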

One may compare Corollary \ref{corseqdber} with \cite[Theorem A]{Gignac2014a} for  polarized endomorphism. See \cite{Guedj2005,Dinh2015} for according result for complex topology.

\newpage

\bibliography{dd}
\end{document}